\newcommand{\klockan}{\the\hours:{\ifnum\minutes<10 0\fi}\the\minutes}
\newcommand{\tid}{\today\ \klockan}
\newcommand{\prtid}{\smash{\raise 10mm \hbox{\LaTeX ed \tid}}}
\renewcommand{\prtid}{}
\def\sectionmark#1{} 
\def\subsectionmark#1{}
\newcommand{\sectnr}{\ifnum \c@secnumdepth >\z@
                 \thesection.\hskip 1em\relax \fi}
\def\@evenhead{\footnotesize\rm\thepage\hfil\leftmark\hfil\llap{\prtid}}
\def\@oddhead{\footnotesize\rm\rlap{\prtid}\hfil\rightmark\hfil\thepage}
\def\tableofcontents{\section*{Contents} 
 \@starttoc{toc}}
\def\@biblabel#1{#1.}
\let\Thebibliography=\thebibliography
\renewcommand{\thebibliography}[1]{\def\@mkboth##1##2{}\Thebibliography{#1}
\addcontentsline{toc}{section}{References}
\frenchspacing 
\setlength{\@topsep}{0pt}
\setlength{\itemsep}{0pt}%
\setlength{\parskip}{0pt plus 2pt}%
}
\def\mdots@{\mathinner.\nonscript\!.%
 \ifx\next,.\else\ifx\next;.\else\ifx\next..\else
 \nonscript\!\mathinner.\fi\fi\fi}
\let\ldots\mdots@
\let\cdots\mdots@
\let\dotso\mdots@
\let\dotsb\mdots@
\let\dotsm\mdots@
\let\dotsc\mdots@
\def\vdots{\vbox{\baselineskip2.8\p@ \lineskiplimit\z@
    \kern6\p@\hbox{.}\hbox{.}\hbox{.}\kern3\p@}}
\def\ddots{\mathinner{\mkern1mu\raise8.6\p@\vbox{\kern7\p@\hbox{.}}%
    \raise5.8\p@\hbox{.}\raise3\p@\hbox{.}\mkern1mu}}
\def\@seccntformat#1{\csname the#1\endcsname.\quad}
\long\def\@makecaption#1#2{%
  \vskip\abovecaptionskip
  \sbox\@tempboxa{ #1. #2}%
  \ifdim \wd\@tempboxa >\hsize
    #1. #2\par
  \else
    \global \@minipagefalse
    \hb@xt@\hsize{\hfil\box\@tempboxa\hfil}%
  \fi
  \vskip\belowcaptionskip}
\newcommand{\authortitle}[2]{\author{#1}\title{#2}\markboth{#1}{#2}}
\newcommand{\art}[6]{{\sc #1, \rm #2, \it #3 \bf #4 \rm (#5), \mbox{#6}.}}
\newcommand{\artprep}[3]{{\sc #1, \rm #2, #3.}}
\newcommand{\artin}[3]{{\sc #1, \rm #2,  in #3.}}
\newcommand{\book}[3]{{\sc #1, \it #2, \rm #3.}}
\newcommand{\auth}[2]{{#1, #2.}}
\newcommand{\AND}{{\rm and }}
\newtheoremstyle{descriptive}%
  {\topsep}   
  {\topsep}   
  {\rmfamily} 
  {}          
  {\bfseries} 
  {.}         
  { }         
  {}          
\newtheoremstyle{propositional}%
  {\topsep}   
  {\topsep}   
  {\itshape}  
  {}          
  {\bfseries} 
  {.}         
  { }         
  {}          
\theoremstyle{propositional}
\newtheorem{thm}{Theorem}[section]
\newtheorem{prop}[thm]{Proposition}
\newtheorem{lem}[thm]{Lemma}
\newtheorem{cor}[thm]{Corollary}
\theoremstyle{descriptive}
\newtheorem{deff}[thm]{Definition}
\newtheorem{example}[thm]{Example}
\newtheorem{remark}[thm]{Remark}
\renewenvironment{proof}[1][\proofname]{\par
  \pushQED{\qed}%
  \normalfont
  \trivlist
  \item[\hskip\labelsep
        \itshape
    #1\@addpunct{.}]\ignorespaces
}{%
  \popQED\endtrivlist\@endpefalse
}
\newcommand{\setm}{\setminus}
\renewcommand{\subsetneq}{\varsubsetneq}
\renewcommand{\emptyset}{\varnothing}
\newcommand{\grad}{\nabla}
\DeclareMathOperator{\BMO}{BMO}
\DeclareMathOperator*{\essliminf}{ess\,lim\,inf}
\newcommand{\bdry}{\partial}
\newcommand{\bdy}{\bdry}
\newcommand{\loc}{_{\rm loc}}
{\catcode`p =12 \catcode`t =12 \gdef\eeaa#1pt{#1}}      
\def\accentadjtext#1{\setbox0\hbox{$#1$}\kern   
                \expandafter\eeaa\the\fontdimen1\textfont1 \ht0 }
\def\accentadjscript#1{\setbox0\hbox{$#1$}\kern 
                \expandafter\eeaa\the\fontdimen1\scriptfont1 \ht0 }
\def\accentadjscriptscript#1{\setbox0\hbox{$#1$}\kern   
                \expandafter\eeaa\the\fontdimen1\scriptscriptfont1 \ht0 }
\def\accentadjtextback#1{\setbox0\hbox{$#1$}\kern       
                -\expandafter\eeaa\the\fontdimen1\textfont1 \ht0 }
\def\accentadjscriptback#1{\setbox0\hbox{$#1$}\kern     
                -\expandafter\eeaa\the\fontdimen1\scriptfont1 \ht0 }
\def\accentadjscriptscriptback#1{\setbox0\hbox{$#1$}\kern 
                -\expandafter\eeaa\the\fontdimen1\scriptscriptfont1 \ht0 }
\def\itoverline#1{{\mathsurround0pt\mathchoice
        {\rlap{$\accentadjtext{\displaystyle #1}
                \accentadjtext{\vrule height1.593pt}
                \overline{\phantom{\displaystyle #1}
                \accentadjtextback{\displaystyle #1}}$}{#1}}
        {\rlap{$\accentadjtext{\textstyle #1}
                \accentadjtext{\vrule height1.593pt}
                \overline{\phantom{\textstyle #1}
                \accentadjtextback{\textstyle #1}}$}{#1}}
        {\rlap{$\accentadjscript{\scriptstyle #1}
                \accentadjscript{\vrule height1.593pt}
                \overline{\phantom{\scriptstyle #1}
                \accentadjscriptback{\scriptstyle #1}}$}{#1}}
        {\rlap{$\accentadjscriptscript{\scriptscriptstyle #1}
                \accentadjscriptscript{\vrule height1.593pt}
                \overline{\phantom{\scriptscriptstyle #1}
                \accentadjscriptscriptback{\scriptscriptstyle #1}}$}{#1}}}}
\newcommand{\limplus}{{\mathchoice{\vcenter{\hbox{$\scriptstyle +$}}}
  {\vcenter{\hbox{$\scriptstyle +$}}}
  {\vcenter{\hbox{$\scriptscriptstyle +$}}}
  {\vcenter{\hbox{$\scriptscriptstyle +$}}}
}}
\newcommand{\alp}{\alpha}
\newcommand{\al}{\alpha}
\newcommand{\be}{\beta}
\newcommand{\Om}{\Omega}
\renewcommand{\phi}{\varphi}
\newcommand{\tphi}{\widetilde{\phi}}
\newcommand{\eps}{\varepsilon}
\newcommand{\de}{\delta}
\newcommand{\ga}{\gamma}
\newcommand{\p}{{$p\mspace{1mu}$}}
\newcommand{\R}{\mathbf{R}}
\newcommand{\ub}{\bar{u}}
\newcommand{\tQ}{\widetilde{Q}}
\newcommand{\bQ}{\itoverline{Q}}
\newcommand{\Wp}{W^{1,p}}
\newcommand{\Wploc}{W^{1,p}\loc}
\newcommand{\Ei}{$(E_i)$}
\newcommand{\Ej}{$(E_j)$}
\newcommand{\Ek}{$(E_k)$}
\newcommand{\Eij}{$(E_{ij})$}
\newcommand{\Eik}{$(E_{ik})$}
\newcommand{\Ejk}{$(E_{jk})$}
\newcommand{\Eji}{$(E_{ji})$}
\newcommand{\Eki}{$(E_{ki})$}
\newcommand{\Ekj}{$(E_{kj})$}
\newcommand{\Ehi}{$(\widehat{E}_i)$}
\newcommand{\Ehj}{$(\widehat{E}_j)$}
\newcommand{\Ehk}{$(\widehat{E}_k)$}
\newcommand{\xhi}{\hat{x}_i}
\numberwithin{equation}{section}
\newenvironment{ack}{\medskip{\it Acknowledgement.}}{}
\begin{document}

\authortitle{Anders Bj\"orn, Jana Bj\"orn
and Riikka Korte}
{Minima of quasisuperminimizers}
\author{
Anders Bj\"orn \\
\it\small Department of Mathematics, Link\"oping University,\\
\it\small SE-581 83 Link\"oping, Sweden\/{\rm ;}
\it \small anders.bjorn@liu.se
\\
\\
Jana Bj\"orn \\
\it\small Department of Mathematics, Link\"oping University, \\
\it\small SE-581 83 Link\"oping, Sweden\/{\rm ;}
\it \small jana.bjorn@liu.se
\\
\\
Riikka Korte \\
\it\small Department of Mathematics, P.O. Box 68\/ 
\textup{(}Gustaf H\"allstr\"omin katu 2b\/\textup{)}, \\
\it\small FI-00014 University of Helsinki, Finland\/{\rm ;}
\it \small riikka.korte@helsinki.fi
}

\date{}
\maketitle

\noindent{\small {\bf Abstract}.  
Let $u_i$ be a $Q_i$-quasisuperminimizer, $i=1,2$, and $u=\min\{u_1,u_2\}$,
where $1 \le Q_1 \le Q_2$.
Then $u$ is a quasisuperminimizer, and we improve
upon the known upper bound (due to Kinnunen and Martio)
for the optimal quasisuperminimizing constant $Q$ of $u$.
We give the first examples with
$Q>Q_2$, and show that in general $Q>Q_2$ whenever $Q_1 >1$.
We also study the blowup of the quasisuperminimizing constant in pasting lemmas.
}

\bigskip
\noindent
{\small \emph{Key words and phrases}:
Blowup, metric space, nonlinear potential theory, pasting lemma,
quasiminimizer,
quasisuperharmonic function, quasisuperminimizer.
}

\medskip
\noindent
{\small Mathematics Subject Classification (2010):
Primary: 31C45; Secondary: 31E05, 35J60.
}

\section{Introduction}

Let $1<p<\infty$ and let $\Om \subset \R^n$ be a nonempty open set.
A function $u \in \Wploc(\Om)$ is a \emph{Q-quasiminimizer}, $Q \ge 1$,
 in $\Om$ if
\begin{equation} \label{eq-qm-Rn}
      \int_{\phi \ne 0} |\grad u|^p\, dx
	\le   Q    \int_{\phi \ne 0} |\grad (u+\phi)|^p\, dx
\end{equation}
for all $\phi \in \Wp_0(\Om)$.
A function $u$ is a \emph{$Q$-quasisuper\/\textup{(}sub\/\textup{)}minimizer} 
if \eqref{eq-qm-Rn}
 holds for all nonnegative (nonpositive) $\varphi\in W^{1,p}_0(\Omega)$.

Quasiminimizers were introduced by
Giaquinta and Giusti~\cite{GG1}, \cite{GG2} as a tool for a unified
treatment of variational integrals, elliptic equations and
quasiregular mappings on $\R^n$.  They realized that De Giorgi's
method could be extended to quasiminimizers, obtaining, in particular,
local H\"older continuity.  DiBenedetto and Trudinger~\cite{DiBTru}
proved the Harnack inequality for quasiminimizers, as well as weak
Harnack inequalities for quasisub- and quasisuperminimizers.  
A little later, 
Ziemer~\cite{Ziem86} gave a Wiener-type criterion sufficient for
boundary regularity for quasiminimizers,
and 
Tolksdorf~\cite{tolksdorf} obtained a Caccioppoli inequality and
a convexity result for quasiminimizers.
The results in \cite{DiBTru}--\cite{GG2}
and
\cite{Ziem86} were extended to metric spaces
by Kinnunen--Shanmugalingam~\cite{KiSh01} and J.~Bj\"orn~\cite{Bj02}
in the beginning of this century,
see also A.~Bj\"orn--Marola~\cite{BMarola}.
Soon afterwards, Kinnunen--Martio~\cite{KiMa03} showed
that quasiminimizers have an interesting potential theory,
in particular they introduced quasisuperharmonic functions, which
are related to quasisuperminimizers in a similar way as
superharmonic functions are related to supersolutions.
The theory of quasi\-(super)\-minimizers has been further studied
in 
\cite{ABkellogg}--\cite{BBpower}, \cite{BBM}, \cite{BMartio}, 
\cite{JBCalcVar}-- \cite{DiBen-Gia},
\cite{Ivert},
\cite{KiKoLa},
\cite{KiMaMa}, 
\cite{KoMaSh}--\cite{moscariello} and 
\cite{uppman}.

It is well known that the minimum 
of two superharmonic functions is again superharmonic.
This property is used extensively e.g.\ in balayage and in
the Perron method for solving the Dirichlet problem.
For quasisuperminimizers, 
Kinnunen--Martio~\cite{KiMa03} showed the following similar result.
(We formulate it in $\R^n$, but it is valid also in metric measure
spaces, see Section~\ref{sect-2}.
The same holds for Theorems~\ref{thm:min2} and~\ref{thm-paste-supermin}.)

\begin{thm}  \label{thm-KiMa}
\textup{(Kinnunen--Martio~\cite{KiMa03})}
Let $u_j$ be a $Q_j$-quasisuperminimizer, $j=1,2$.
Then $\min\{u_1,u_2\}$ is a $\min\{Q_1Q_2,Q_1+Q_2\}$-quasisuperminimizer.
\end{thm}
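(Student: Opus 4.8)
The plan is to fix a nonnegative $\phi\in\Wp_0(\Om)$, set $u=\min\{u_1,u_2\}$, $v=u+\phi$ and $G=\{x\in\Om:\phi(x)\ne0\}$, and establish the two inequalities
\[
   \int_G|\grad u|^p\,dx\le(Q_1+Q_2)\int_G|\grad v|^p\,dx
   \quad\text{and}\quad
   \int_G|\grad u|^p\,dx\le Q_1Q_2\int_G|\grad v|^p\,dx
\]
separately; the theorem then follows by taking the smaller of the two constants. Since $\phi\ge0$ we have $u\le v$ on $\Om$ with strict inequality exactly on $G$, and since $u\le u_j$ one checks $v-u_j\le\phi$, so for $j=1,2$ the truncation $(v-u_j)_+$ is an admissible nonnegative test function for $u_j$ (it lies in $\Wp_0(\Om)$ because $0\le(v-u_j)_+\le\phi$), $\{(v-u_j)_+\ne0\}=\{v>u_j\}\subseteq G$, and $u_j+(v-u_j)_+=\max\{u_j,v\}$, which equals $v$ on $\{v>u_j\}$. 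I will use freely that $\grad f=\grad g$ a.e.\ on $\{f=g\}$, so that $\grad\min\{f,g\}=\grad f$ on $\{f\le g\}$ and $=\grad g$ on $\{g\le f\}$.

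For the additive estimate I would split $G=G_1\cup G_2$ with $G_1=G\cap\{u_1\le u_2\}$ and $G_2=G\cap\{u_1>u_2\}$, so that $u=u_j$ on $G_j$ and $\int_G|\grad u|^p=\int_{G_1}|\grad u_1|^p+\int_{G_2}|\grad u_2|^p$. On $G_j$ one has $v=u_j+\phi>u_j$, hence $G_j\subseteq\{v>u_j\}$, and the $Q_j$-quasisuperminimizer inequality for $u_j$ tested with $(v-u_j)_+$ gives $\int_{G_j}|\grad u_j|^p\le\int_{\{v>u_j\}}|\grad u_j|^p\le Q_j\int_{\{v>u_j\}}|\grad v|^p\le Q_j\int_G|\grad v|^p$. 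Summing over $j$ yields the $(Q_1+Q_2)$-estimate.

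For the multiplicative estimate the idea is to relax the two obstacles one at a time by interposing the function $z=\min\{u_1,v\}$, which satisfies $u\le z\le v$ and $z=u$ off $G$. First I claim $\int_G|\grad z|^p\le Q_1\int_G|\grad v|^p$: since $z=u_1$ where $u_1\le v$ and $z=v$ where $u_1\ge v$, and since $G\cap\{u_1<v\}=\{v>u_1\}$, we have $\int_G|\grad z|^p=\int_{\{v>u_1\}}|\grad u_1|^p+\int_{G\cap\{u_1\ge v\}}|\grad v|^p$, and the $Q_1$-quasisuperminimizer inequality for $u_1$ tested with $(v-u_1)_+$ bounds the first term by $Q_1\int_{\{v>u_1\}}|\grad v|^p$; since $Q_1\ge1$ the claim follows. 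Secondly I claim $\int_G|\grad u|^p\le Q_2\int_G|\grad z|^p$: on $G_1$ we have $v=u_1+\phi\ge u_1$, so $z=u_1=u$ there, while on $G_2$ one checks, using $z\le u_1$ and $v=u_2+\phi>u_2$, that $\{z>u_2\}=G_2$, on which $u=u_2$; testing the $Q_2$-quasisuperminimizer $u_2$ with $(z-u_2)_+$, which is admissible since $0\le(z-u_2)_+\le\phi$ and $u_2+(z-u_2)_+=\max\{u_2,z\}=z$ on $\{z>u_2\}$, gives $\int_{G_2}|\grad u_2|^p\le Q_2\int_{G_2}|\grad z|^p$; adding the $G_1$-contribution, where $\grad u=\grad z$, and using $Q_2\ge1$ gives the claim. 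Composing the two claims produces $\int_G|\grad u|^p\le Q_1Q_2\int_G|\grad v|^p$.

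The obstacle here is conceptual rather than computational: a single application of a quasisuperminimizer inequality can only yield the additive constant, and it is the two-stage passage $u\to z\to v$, relaxing first the $u_1$-cap and then the $u_2$-cap via the auxiliary function $z=\min\{u_1,v\}$, that generates the product $Q_1Q_2$. Everything else is routine bookkeeping --- checking that the truncations $(v-u_j)_+$ and $(z-u_2)_+$ lie in $\Wp_0(\Om)$ (standard, since they are dominated by $\phi$, and carried out identically for Newtonian Sobolev spaces, so the result transfers to metric spaces) and tracking which of $\grad u_1$, $\grad u_2$, $\grad v$ represents $\grad u$ or $\grad z$ on each piece of $G$. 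It is also worth recording that $Q_1Q_2\le Q_1+Q_2$ precisely when $(Q_1-1)(Q_2-1)\le1$, so neither of the two constants always wins.
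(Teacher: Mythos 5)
Your proof is correct, but note that the paper never proves Theorem~\ref{thm-KiMa} itself: it quotes it from Kinnunen--Martio, and its own argument (the proof of Theorem~\ref{thm:min2}) establishes the stronger bound $\bQ=(Q_1+Q_2-2)Q_1Q_2/(Q_1Q_2-1)\le\min\{Q_1Q_2,Q_1+Q_2\}$, from which the cited statement follows. Structurally you use almost the same test functions as the paper but combine them differently: the additive constant comes from testing $u_j$ with $(v-u_j)_\limplus$ (the paper's $\tphi_j$) and adding the two inequalities over the disjoint pieces $G_1$, $G_2$, while the product comes from composing two inequalities through the intermediate obstacle $z=\min\{u_1,v\}$, your test function $(z-u_2)_\limplus$ being exactly the paper's $\phi_2$. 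The paper instead forms a weighted sum of four such inequalities, \eqref{eq-A1-v}, \eqref{eq-A2-v}, \eqref{eq-est-u1-v} and \eqref{eq-est-u2-v}, with multipliers $Q_2-1$, $Q_1-1$, $Q_2(Q_1-1)$ and $Q_1(Q_2-1)$, and cancels terms across them; that extra bookkeeping is what buys the strictly better constant when $Q_1,Q_2>1$, but since terms are subtracted it forces the preliminary reduction to $\int_A|\grad u|^p\,dx<\infty$. Your argument is purely additive/compositional, so no such reduction is needed, at the price of only reaching the Kinnunen--Martio constant. One shared informality: you justify $(v-u_j)_\limplus,(z-u_2)_\limplus\in W^{1,p}_0(\Om)$ solely by the domination $0\le\,\cdot\,\le\phi$; strictly one also needs these truncations to have $p$-integrable gradients (or an appropriate version of this sandwich lemma), but the paper argues its test functions' admissibility at exactly the same level, so this is not a gap relative to the paper's standard of rigor.
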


The blowup of the quasisuperminimizing constant in this result
is the main focus of this paper.
Our first result is the following better upper bound.

\begin{thm}\label{thm:min2}
Let $u_i$ be a $Q_i$-quasisuperminimizer in $\Omega$ for $i=1,2$. 
Then $u=\min\{u_1,u_2\}$ is a $\bQ$-quasisuperminimizer in $\Omega$, where
 \begin{equation}   \label{eq-def-q-for-min}
\bQ=\begin{cases}
  1, & \text{if } Q_1=Q_2=1, \\
  \displaystyle (Q_{1}+Q_{2}-2)\frac{Q_{1}Q_{2}}{Q_{1}Q_{2}-1}, & \text{otherwise.}
  \end{cases}
 \end{equation}
In particular, if $Q_1=Q_2$, then $\bQ=2Q_1^2/(Q_1+1)$.
\end{thm}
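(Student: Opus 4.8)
\noindent\emph{Proof idea.}\
The plan is to test the quasisuperminimizing inequality for $u=\min\{u_1,u_2\}$ against four carefully chosen competitors and then eliminate the resulting cross terms by a short linear computation (in the spirit of, but going well beyond, the Kinnunen--Martio argument behind Theorem~\ref{thm-KiMa}). Write $g_w=|\grad w|$ (in a metric space, the minimal $p$-weak upper gradient; everything below is lattice-theoretic and carries over verbatim, cf.\ Section~\ref{sect-2}), fix a nonnegative $\phi\in\Wp_0(\Om)$, put $v=u+\phi$, and note $\{v>u\}=\{\phi\ne0\}$. We must show $\int_{\{v>u\}}g_u^p\le\bQ\int_{\{v>u\}}g_v^p$; we may assume the right-hand side is finite, and if $Q_1=Q_2=1$ the assertion $\bQ=1$ is Theorem~\ref{thm-KiMa}, so from now on $Q_1Q_2>1$.

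Set up the level sets: partition $\{v>u\}=E_1\cup E_2$, where $E_1=\{v>u,\ u_1\le u_2\}$ and $E_2=\{v>u,\ u_1>u_2\}$, so that $g_u=g_{u_i}$ a.e.\ on $E_i$, and split $E_i=E_i^a\cup E_i^b$ according to whether $v>u_{3-i}$ or $v\le u_{3-i}$; one checks that $\{v>u_i\}=E_i\cup E_{3-i}^a$. For $i=1,2$, since $\min\{v,u_{3-i}\}-u_i\le v-u_i\le\phi$ (because $\min\{u_1,u_2\}\le u_i$), both $\max\{u_i,v\}$ and $\max\{u_i,\min\{v,u_{3-i}\}\}$ have increment over $u_i$ lying between $0$ and $\phi$, hence are admissible competitors for $u_i$. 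Testing with $\max\{u_i,v\}$, which equals $v$ on $\{v>u_i\}=E_i\cup E_{3-i}^a$, gives
\begin{equation*}
  \int_{E_i}g_{u_i}^p+\int_{E_{3-i}^a}g_{u_i}^p\le Q_i\Bigl(\int_{E_i}g_v^p+\int_{E_{3-i}^a}g_v^p\Bigr);
\end{equation*}
testing with the ``obstacle-truncated'' competitor $\max\{u_i,\min\{v,u_{3-i}\}\}$, which on $E_i$ equals $v$ on $E_i^b$ and $u_{3-i}$ on $E_i^a$ (and equals $u_i$ off $E_i$), gives
\begin{equation*}
  \int_{E_i}g_{u_i}^p\le Q_i\Bigl(\int_{E_i^b}g_v^p+\int_{E_i^a}g_{u_{3-i}}^p\Bigr),
\end{equation*}
where a line of care on the null set $\{u_1=u_2\}$ (using $g_{u_1}=g_{u_2}$ a.e.\ there and $Q_i\ge1$) is needed to pass from $E_i\setminus\{u_1=u_2\}$ to $E_i$.

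Write $P_i=\int_{E_i}g_{u_i}^p$, so that $P_1+P_2=\int_{\{v>u\}}g_u^p$. The first inequality rearranges to $\int_{E_{3-i}^a}g_{u_i}^p\le Q_i\bigl(\int_{E_i}g_v^p+\int_{E_{3-i}^a}g_v^p\bigr)-P_i$; inserting this into the second inequality written for the index $3-i$ yields, for each $i$, a relation of the form $P_{3-i}+Q_{3-i}P_i\le L_i$ with $L_i$ an explicit nonnegative linear combination of the numbers $\int_{E_j^a}g_v^p$ and $\int_{E_j^b}g_v^p$. Now add the relation obtained from $i=2$, weighted by $Q_2-1$, to the relation obtained from $i=1$, weighted by $Q_1-1$: this is exactly the weighting that makes the coefficients of $P_1$ and of $P_2$ on the left-hand side agree, both becoming $Q_1Q_2-1$. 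Dividing by $Q_1Q_2-1$ (this is where the factor $Q_1Q_2/(Q_1Q_2-1)$, and the hypothesis $Q_1Q_2>1$, enter) and collecting the $g_v$-coefficients, one obtains
\begin{equation*}
  P_1+P_2\le Q_1\int_{E_1^b}g_v^p+Q_2\int_{E_2^b}g_v^p+\bQ\int_{E_1^a\cup E_2^a}g_v^p .
\end{equation*}
Since $Q_j\le\bQ$ for $j=1,2$ (each reducing, after clearing denominators, to $(Q_j-1)^2\ge0$) and $E_1^b,E_2^b,E_1^a\cup E_2^a$ partition $\{v>u\}$, the right-hand side is at most $\bQ\int_{\{v>u\}}g_v^p$, which is the required inequality; the special case $Q_1=Q_2$ of \eqref{eq-def-q-for-min} is then a one-line simplification.

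The step I expect to be the genuine obstacle is finding the competitors — especially the obstacle-truncated functions $\max\{u_i,\min\{v,u_{3-i}\}\}$ — and noticing that the cross terms $\int_{E_i^a}g_{u_{3-i}}^p$ they create are precisely the ones controlled by the plain competitors $\max\{u_i,v\}$. Once all four inequalities are available, the elimination (with the forced weights $Q_2-1$ and $Q_1-1$) and the closing comparison $Q_j\le\bQ$ are routine, as is the measure-zero bookkeeping.
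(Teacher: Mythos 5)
Your proposal is correct and follows essentially the same route as the paper's proof: your competitors $\max\{u_i,v\}$ and $\max\{u_i,\min\{v,u_{3-i}\}\}$ are exactly the paper's test functions $\tphi_i=(v-u_i)_\limplus$ and $\phi_i=(\min\{u_{3-i},v\}-u_i)_\limplus$, your sets $E_i^a$, $E_i^b$ match the paper's $A_0$, $A_i\setm A_0$ decomposition, and your elimination uses precisely the same weights $(Q_2-1)$, $(Q_1-1)$, $Q_2(Q_1-1)$, $Q_1(Q_2-1)$ on the four inequalities. One cosmetic remark: $\{u_1=u_2\}$ need not be a null set, but your actual argument there (that the gradients agree a.e.\ on it and $Q_i\ge1$) is exactly what is needed, so this does not affect correctness.
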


Note that when $Q_{1},Q_{2}>1$, we always have the following bounds for $\bQ$
in \eqref{eq-def-q-for-min}:
 \[
 Q_{1}+Q_{2}-2< \bQ<Q_{1}+Q_{2}-1< \min\{Q_{1}Q_{2},Q_{1}+Q_{2}\}.
 \]
This means that we obtain a better blowup constant than 
Kinnunen--Martio~\cite{KiMa03} whenever $Q_{1},Q_{2}>1$.

In the converse direction it is clear that $u$
cannot (in general) have a better quasisuperminimizing constant
than $\max\{Q_1,Q_2\}$
(and thus already Theorem~\ref{thm-KiMa} is optimal if $Q_1=1$ or $Q_2=1$).
As far as we know, there have so far not been any examples showing that
some blowup is indeed possible.
We construct such examples in Section~\ref{sect-lower}.
In particular, we prove the following result.

\begin{thm}   \label{thm-ex-blowup}
Let  $p>1$ and $1<Q_1\le Q_2$. Then there exist functions $u_1$ and $u_2$ on 
$(0,1)\subset\R$ such that $u_j$ is a $Q_j$-quasisuperminimizer in $(0,1)$, 
$j=1,2$, but $\min\{u_1,u_2\}$ is not a $Q_2$-quasisuperminimizer in $(0,1)$.
\end{thm}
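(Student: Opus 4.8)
The plan is to work on the unit interval $(0,1) \subset \R$, where quasisuperminimizers admit an essentially explicit description: on $\R$ a $Q$-quasiminimizer is, up to affine reparametrization of the values, a power-type function, and a $Q$-quasisuperminimizer is a function that is the minimum of such a piece with a constant, or more precisely is built from concave/power arcs in a controlled way. The key structural fact I would invoke (and which is standard in the one-dimensional theory, cf.\ the references in the introduction, particularly the analysis underlying \cite{KiMa03} and the later one-dimensional studies) is that for a function of the form
\[
  u(x) = \begin{cases} \text{(one power arc)}, & 0<x<a,\\ \text{(another arc)}, & a<x<1, \end{cases}
\]
the optimal quasisuperminimizing constant at the junction $x=a$ can be computed exactly by comparing the $p$-energy of $u$ on a small interval around $a$ against the energy of the linear function with the same boundary values. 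So the strategy is: build $u_1$ and $u_2$ as two such piecewise-power functions whose ``bad'' junctions are placed at the \emph{same} point $x=a$, arranged so that each $u_j$ individually is exactly a $Q_j$-quasisuperminimizer, but so that $\min\{u_1,u_2\}$ has a junction at $a$ that is strictly worse than any $Q_2$-arc.

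The concrete construction I would carry out: fix $a=\tfrac12$. Let $u_1$ be chosen so that near $a$ it looks like $c_1|x-a|^{1/(p-1)}$ on one side glued to a function that is decreasing (so that $u_1$ has a strict local minimum, or a corner pointing the ``wrong'' way) — tuned so the optimal constant of $u_1$ is exactly $Q_1$. Let $u_2$ be the mirror-type construction with optimal constant $Q_2$, but arranged so that on the side where $u_1$ realizes its worst ratio, $u_2 < u_1$, and on the other side $u_2 > u_1$, so that $u = \min\{u_1,u_2\}$ inherits from $u_1$ the steep descent on one side of $a$ and from $u_2$ the steep descent on the other side. The point is that a single $Q_2$-quasisuperminimizer cannot descend steeply on \emph{both} sides of an interior point (that would force, via the defining inequality tested against a tent function, a constant larger than $Q_2$), whereas $\min\{u_1,u_2\}$ does exactly that. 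I would then verify, by a direct energy computation on a symmetric interval $(a-t,a+t)$ and letting $t$ vary, that the resulting ratio of energies exceeds $Q_2$ for some admissible test function, contradicting the $Q_2$-quasisuperminimizer property of $u$.

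The main obstacle will be the bookkeeping that simultaneously forces (i) $u_1$ to be \emph{exactly} (not just at most) a $Q_1$-quasisuperminimizer, (ii) $u_2$ to be exactly a $Q_2$-quasisuperminimizer, and (iii) the two graphs to cross at $a$ in the prescribed way with the right one-sided slopes. Getting (i) and (ii) right requires knowing the sharp one-dimensional energy computation for a power arc glued to a monotone (or constant) piece — i.e.\ identifying, for a two-piece function, the optimal test function (which will be a suitable ``tent'' supported near $a$) and evaluating the energy ratio explicitly; this is a one-variable optimization in the parameter $t$ (the half-width of the test function's support) and in the shape parameters of the arcs. Once that sharp formula is in hand, conditions (i)–(iii) become a finite system of equalities/inequalities in the free parameters ($p$, the slope constants, the value offsets), and the content of the theorem is that this system is solvable for every $p>1$ and every $1<Q_1\le Q_2$; I would exhibit an explicit solution, then confirm by the same energy computation applied to $u=\min\{u_1,u_2\}$ that its optimal constant strictly exceeds $Q_2$.
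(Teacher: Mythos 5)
There is a genuine gap: the local mechanism your construction relies on cannot work for \emph{quasisuperminimizers}. First, a function with a strict interior local minimum is not a $Q$-quasisuperminimizer for any finite $Q$: if $u(x)>u(a)$ for $0<|x-a|<t$, pick $u(a)<m<\min\{u(a-t),u(a+t)\}$ and test with $\phi=(m-u)_+$ (supported near $a$); then $u+\phi\equiv m$ on $\{\phi>0\}$, so the right-hand side of the defining inequality vanishes while the left-hand side is positive. So you cannot ``tune'' a $u_1$ with a strict local minimum at $a$ to have optimal constant exactly $Q_1$. Second, if you instead use admissible convex corners (increasing one-corner or power-type functions), the minimum does not inherit the two steep branches you want: at a point where the graphs cross, $\min\{u_1,u_2\}$ switches from the steeper to the shallower slope, i.e.\ it has a \emph{concave} kink, and concave kinks cost nothing for the quasisuperminimizing property (in one dimension concave functions are superminimizers; e.g.\ $\min\{x,1-x\}$ ``descends steeply on both sides'' of $\tfrac12$ and is a $1$-quasisuperminimizer), while if the two corners sit at the same point with equal values the minimum simply picks the shallower branch on each side. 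Consequently no tent-function test localized at the junction can detect a constant exceeding $\max\{Q_1,Q_2\}$; the blowup is invisible locally. (Also, your background claim that one-dimensional quasiminimizers are power functions up to affine reparametrization is false; see the zig-zag quasiminimizers of Proposition~\ref{prop-optimal-zig-zag}.) Finally, even on its own terms the proposal is a plan rather than a proof: the explicit functions, the sharp energy formula, and the solvability of the resulting system of constraints are all deferred.

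What does work --- and is the route the paper takes --- is a \emph{global} energy comparison over all of $(0,1)$. Take $u_1(x)=x^{\alpha_1}$ with $\alpha_1>1$ and $u_2(x)=1-(1-x)^{\alpha_2}$ with $1-1/p<\alpha_2<1$, the exponents chosen (via Example~\ref{ex-power}) so that the optimal constants are exactly $Q_1$ and $Q_2$; both are convex subminimizers lying below the line $v(x)=x$, and each has the maximal $p$-energy allowed by its constant on its side of the crossing point. Then $\phi=v-u\ge0$ is an admissible test function on the whole interval, and since $u_2$ is a subminimizer with $u_1<u_2$ on a set of positive measure to the left of the crossing, uniqueness for obstacle problems gives the strict inequality $\int_0^1 (u')^p\,dx>\int_0^1 (u_2')^p\,dx=Q_2=Q_2\int_0^1 (v')^p\,dx$, so $u=\min\{u_1,u_2\}$ is not a $Q_2$-quasisuperminimizer. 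I suggest you abandon the junction-point analysis and redirect your construction toward this kind of global comparison.
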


We also obtain estimates for the blowup in the quasisuperminimizing constant.
In Section~\ref{sect-3-or-more} we give an upper bound for the blowup
when taking a minimum of three quasisuperminimizers, which is better
than iterating Theorem~\ref{thm:min2}.

Another result with a blowup in the quasisuperminimizing constant is the 
following pasting lemma.

\begin{thm} \label{thm-paste-supermin} 
\textup{(Bj\"orn--Martio~\cite[Theorem~4.1]{BMartio})}
Assume that\/ $\Om_1 \subset \Om_2 \subset \R^n$ are open and
that $u_j$ is a $Q_j$-quasisuperminimizer in\/ $\Om_j$, $j=1,2$.
Let
\[
     u=\begin{cases}
        u_2, & \text{in } \Om_2 \setm \Om_1, \\ 
        \min\{u_1,u_2\}, & \text{in } \Om_1.
	\end{cases}
\]
If $u \in \Wploc(\Om_2)$,
then $u$ 
is a $Q_1 Q_2$-quasisuperminimizer in\/ $\Om_2$.
\end{thm}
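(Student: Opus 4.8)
The plan is to verify the defining inequality for $u$ directly. By a routine approximation it is enough to take $\phi\in\Wp_0(\Om_2)$ nonnegative, bounded, and with compact support in $\Om_2$; put $v=u+\phi$, so that the goal becomes $\int_{\phi\ne0}|\grad u|^p\,dx\le Q_1Q_2\int_{\phi\ne0}|\grad v|^p\,dx$. The two ingredients will be the $Q_2$-quasisuperminimizing inequality for $u_2$ in $\Om_2$ and the $Q_1$-quasisuperminimizing inequality for $u_1$ in $\Om_1$, each tested against a competitor built from $v$, and these will then be combined over a partition of $\{\phi\ne0\}$. The starting observation is that $u\le u_2$ throughout $\Om_2$, since $u=u_2$ on $\Om_2\setm\Om_1$ and $u=\min\{u_1,u_2\}$ on $\Om_1$; in the metric setting one runs the same argument with upper gradients in place of $|\grad u|$.

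First I would test $u_2$ with $\phi_2:=(v-u_2)_+$. Since $u\le u_2$ and $\phi\ge0$, we have $0\le\phi_2\le\phi$, so $\phi_2$ is an admissible nonnegative test function in $\Wp_0(\Om_2)$. As $u_2+\phi_2=\max\{v,u_2\}$ equals $v$ on $\{\phi_2\ne0\}=\{v>u_2\}$, this gives
\begin{equation}
  \int_{\{v>u_2\}}|\grad u_2|^p\,dx\le Q_2\int_{\{v>u_2\}}|\grad v|^p\,dx. \tag{$*$}
\end{equation}
Next I would test $u_1$ in $\Om_1$ with $\phi_1:=(m-u_1)_+$, where $m:=\min\{v,u_2\}$ on $\Om_1$. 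On $\{u_1\ge u_2\}$ one has $m\le u_2\le u_1$, so $\phi_1=0$ there, while on $\{u_1<u_2\}$ (where $u=u_1$) one has $\phi_1=\min\{\phi,u_2-u_1\}$; in particular $0\le\phi_1\le\phi$ in $\Om_1$, and $\phi_1$ coincides on $\Om_1$ with $g:=\min\{\phi,(u_2-u)_+\}$, which is defined on all of $\Om_2$ and vanishes on $\Om_2\setm\Om_1$. Writing $F_1:=\{u_1<u_2\}\cap\{\phi\ne0\}=\{\phi_1\ne0\}$, and using that $u_1+\phi_1=\max\{u_1,m\}$ equals $m$ on $F_1$ together with the a.e.\ identities $\grad m=\grad v$ on $\{v\le u_2\}$ and $\grad m=\grad u_2$ on $\{v>u_2\}$, the $Q_1$-quasisuperminimizing inequality for $u_1$ gives
\begin{equation}
  \int_{F_1}|\grad u_1|^p\,dx\le Q_1\int_{F_1\cap\{v\le u_2\}}|\grad v|^p\,dx+Q_1\int_{F_1\cap\{v>u_2\}}|\grad u_2|^p\,dx. \tag{$**$}
\end{equation}

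To conclude, put $F_2:=\{\phi\ne0\}\setm F_1$; then $u=u_2$ on $F_2$, so $\grad u=\grad u_2$ a.e.\ there (using $\grad u_1=\grad u_2$ a.e.\ on $\{u_1=u_2\}$), while $\grad u=\grad u_1$ a.e.\ on $F_1$. Moreover $v=u_2+\phi>u_2$ on $F_2$, and $\{v>u_2\}\subseteq\{\phi\ne0\}$, so $\{v>u_2\}$ is the disjoint union of $F_1\cap\{v>u_2\}$ and $F_2$. Hence, starting from $\int_{\{\phi\ne0\}}|\grad u|^p\,dx=\int_{F_1}|\grad u_1|^p\,dx+\int_{F_2}|\grad u_2|^p\,dx$, applying $(**)$, then using $Q_1\ge1$ to merge the two $u_2$-integrals into $Q_1\int_{\{v>u_2\}}|\grad u_2|^p\,dx$, then $(*)$, and finally $Q_1\le Q_1Q_2$, all the integrals recombine over the partition $F_1\cap\{v\le u_2\}$, $F_1\cap\{v>u_2\}$, $F_2$ of $\{\phi\ne0\}$ to yield $\int_{\{\phi\ne0\}}|\grad u|^p\,dx\le Q_1Q_2\int_{\{\phi\ne0\}}|\grad v|^p\,dx$, which is exactly the desired inequality.

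The step needing real care — and the reason the statement requires the pasted function $u$ to lie in $\Wploc(\Om_2)$ — is the admissibility of $\phi_1$ as a test function \emph{in $\Om_1$}, since a priori $\phi_1$ is defined and Sobolev only on $\Om_1$. I would argue that $g=\min\{\phi,(u_2-u)_+\}$ is nonnegative with $g\le\phi$ and $g\in\Wp(\Om_2)$ (its gradient is supported in $\{\phi\ne0\}\Subset\Om_2$), hence $g\in\Wp_0(\Om_2)$; and since $g=0$ a.e.\ on $\Om_2\setm\Om_1$, its quasicontinuous representative vanishes q.e.\ outside $\Om_1$, so $\phi_1=g|_{\Om_1}\in\Wp_0(\Om_1)$. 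This is precisely where $u\in\Wploc(\Om_2)$ is used, namely to make $g$ well defined and Sobolev on $\Om_2$. The remaining ingredients — the set-theoretic bookkeeping and the standard a.e.\ gradient formulas for minima and maxima of Sobolev functions — are routine.
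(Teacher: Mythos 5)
The paper does not actually prove this theorem --- it quotes it from Bj\"orn--Martio \cite{BMartio} --- so your proposal can only be compared with that cited proof. Your overall strategy is the standard one: test $u_2$ in $\Om_2$ with $(v-u_2)_+$, test $u_1$ in $\Om_1$ with $(\min\{u_2,v\}-u_1)_+$, and recombine over the partition $F_1\cap\{v\le u_2\}$, $F_1\cap\{v>u_2\}$, $F_2$; that bookkeeping is correct and does yield the constant $Q_1Q_2$. The problem lies exactly at the step you yourself flag as the delicate one, and your justification of it does not work.

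You argue that since $g=\min\{\phi,(u_2-u)_+\}$ vanishes a.e.\ on $\Om_2\setm\Om_1$, its quasicontinuous representative vanishes q.e.\ outside $\Om_1$, whence $\phi_1=g|_{\Om_1}\in\Wp_0(\Om_1)$. This ``a.e.\ $\Rightarrow$ q.e.'' inference is false in general: it holds on open sets (quasicontinuous functions agreeing a.e.\ on an open set agree q.e.\ there), but $\Om_2\setm\Om_1$ is only relatively closed in $\Om_2$ and may even have Lebesgue measure zero while having positive capacity, in which case the a.e.\ statement carries no information at all. Moreover, this is not a repairable technicality within your reading of the hypotheses: if the pasting is interpreted purely a.e., the theorem itself fails. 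Take $p>1$, $\Om_2=(0,1)$, $\Om_1=(0,\tfrac12)\cup(\tfrac12,1)$, $u_2\equiv 1$ and $u_1(x)=|x-\tfrac12|$; each $u_j$ is a $1$-quasisuperminimizer in $\Om_j$, and $u$ coincides a.e.\ with $|x-\tfrac12|\in\Wp(0,1)$, which is not a $Q$-quasisuperminimizer for any finite $Q$ (with $\phi=(\eps-|x-\tfrac12|)_+\ge0$ the competitor $u+\phi$ is constant on $\{\phi\ne0\}$, so its energy there is $0$ while that of $u$ is $2\eps$). The theorem is true because the identity $u=u_2$ on $\Om_2\setm\Om_1$ is to be understood pointwise, i.e.\ q.e.\ for quasicontinuous representatives (automatic in the Newtonian-space setting of \cite{BMartio}, where this example is excluded since the pointwise pasted $u$ is not in the local Sobolev class). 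With that reading, the zero extension of $\phi_1$, which equals $(\min\{u_2,v\}-u)_+$ on all of $\Om_2$, vanishes q.e.\ (not merely a.e.) on $\Om_2\setm\Om_1$, is dominated by $\phi\in\Wp_0(\Om_2)$, and the q.e.-characterization of $\Wp_0(\Om_1)$ then gives admissibility. Replacing your a.e.\ argument by this pointwise/q.e.\ pasting identity closes the gap; the remainder of your proof stands.
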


In Theorems~\ref{thm-Q1Q2-sharp} and~\ref{thm-paste-sharp} 
we show that the blowup
constant $Q_1 Q_2$ is optimal in this result.
There is also a similar pasting lemma for quasisuperharmonic
functions in Bj\"orn--Martio~\cite[Theorem~5.1]{BMartio} 
and our optimality result applies also to this case, see Remark~\ref{rmk-qsh}.

Yet another result with a blowup of the quasisuperminimizing constant
is the reflection
principle by Martio~\cite[Theorem~3.1]{martioReflect}. 
In one dimension (i.e.\ on $\R$) he obtained a better result
in Theorem~4.1 in \cite{martioReflect}.
The blowup constant in the latter result was subsequently
improved upon by Uppman~\cite[Lemma~2.8]{uppman},  who also showed
that his constant is the best possible.

\begin{ack}
The  first two authors were supported by the Swedish Research Council.
The third author was supported by the Academy of Finland, grant no.\ 250403.
Part of this research was done while 
the third author visited Link\"oping University in 2009, 
and while 
all three authors
visited Institut Mittag-Leffler in the autumn of 2013.
They want to thank the institute for the hospitality.
\end{ack}

\section{An upper bound for the blowup}
\label{sect-2}

In this section we are going to  prove Theorem~\ref{thm:min2}.
Let us however first discuss some consequences and generalizations
of it.

\begin{deff} A function $u : \Om \to (-\infty,\infty]$
is  \emph{$Q$-quasisuperharmonic} in $\Om$
if $u$ is not identically $\infty$ in any component of $\Om$,
$\min\{u,k\}$ is a $Q$-quasisuperminimizer in $\Om$ for
every $k \in \R$, and 
$u$ is  \emph{lower semicontinuously regularized}, i.e.\ 
\[
  u(x)=\essliminf_{y \to x} u(y) 
       \quad \text{for } x \in \Om.
\]
\end{deff}

This definition is equivalent to Definition~7.1 in Kinnunen--Martio~\cite{KiMa03},
see Theorem~7.10 in~\cite{KiMa03}.
Using this definition we obtain
the following corollary of Theorem~\ref{thm:min2}.

\begin{cor} \label{cor-min2}
Let $u_i$ be a $Q_i$-quasisuperharmonic function  in $\Omega$ for $i=1,2$. Then $u=\min\{u_1,u_2\}$ is
   $\bQ$-quasisuperharmonic in $\Omega$, 
where $\bQ$ is given by \eqref{eq-def-q-for-min}.
\end{cor}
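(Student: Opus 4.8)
The plan is to verify, one at a time, the three conditions in the definition of a $\bQ$-quasisuperharmonic function for $u=\min\{u_1,u_2\}$, with Theorem~\ref{thm:min2} doing the work for the quasisuperminimizer part. The first condition is immediate: since $u\le u_1$ and $u_1$ is not identically $\infty$ on any component of $\Om$, the same holds for $u$, and since $u_1,u_2>-\infty$ everywhere we clearly have $u:\Om\to(-\infty,\infty]$.

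For the second condition, fix $k\in\R$ and use the elementary identity
\[
   \min\{u,k\}=\min\bigl\{\min\{u_1,k\},\min\{u_2,k\}\bigr\}.
\]
By definition of quasisuperharmonicity, $\min\{u_i,k\}$ is a $Q_i$-quasisuperminimizer in $\Om$ for $i=1,2$, so in particular both belong to $\Wploc(\Om)$; Theorem~\ref{thm:min2} then shows that the right-hand side, that is $\min\{u,k\}$, is a $\bQ$-quasisuperminimizer in $\Om$. Since $k\in\R$ is arbitrary, this is the second condition.

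For the third condition (lower semicontinuous regularization) I would first record that for measurable $f,g$ and any ball $B$,
\[
   \operatorname*{ess\,inf}_B\min\{f,g\}=\min\Bigl\{\operatorname*{ess\,inf}_B f,\operatorname*{ess\,inf}_B g\Bigr\},
\]
where only the inequality ``$\ge$'' needs an argument: with $m=\min\{\operatorname*{ess\,inf}_B f,\operatorname*{ess\,inf}_B g\}$ we have $f\ge m$ and $g\ge m$ a.e.\ on $B$, hence $\min\{f,g\}\ge m$ a.e.\ on $B$. Taking $B=B(x,r)$, letting $r\to0$, and using that $r\mapsto\operatorname*{ess\,inf}_{B(x,r)}f$ is monotone so that the minimum commutes with $\sup_{r>0}$, we obtain
\[
   \essliminf_{y\to x}u(y)
   =\min\Bigl\{\essliminf_{y\to x}u_1(y),\essliminf_{y\to x}u_2(y)\Bigr\}
   =\min\{u_1(x),u_2(x)\}=u(x),
\]
the middle equality being the regularization property of $u_1$ and $u_2$. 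This establishes the third condition, and hence the corollary. (The same argument works verbatim in the metric setting, since Theorem~\ref{thm:min2} and the two auxiliary facts above are available there.)

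I do not anticipate a real obstacle: the statement is essentially a repackaging of Theorem~\ref{thm:min2} together with two routine facts --- truncation commutes with $\min$, and $\operatorname*{ess\,inf}$ commutes with $\min$. The only step warranting care is the regularization identity in the third condition, in particular checking that $\operatorname*{ess\,inf}_B\min\{f,g\}$ genuinely equals, rather than merely being bounded above by, $\min\{\operatorname*{ess\,inf}_B f,\operatorname*{ess\,inf}_B g\}$, and that this passes correctly to the limit $r\to 0$.
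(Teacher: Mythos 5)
Your proof is correct and follows the same route the paper intends: the corollary is stated there as an immediate consequence of Theorem~\ref{thm:min2} applied to the truncations $\min\{u_i,k\}$ via the identity $\min\{u,k\}=\min\{\min\{u_1,k\},\min\{u_2,k\}\}$, with the remaining defining properties (not identically $\infty$ and the $\essliminf$-regularization, which commutes with $\min$ by the monotonicity argument you give) checked routinely. You have simply written out in full the details the paper leaves implicit, and your verification of the regularization identity is sound.
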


We have formulated Theorem~\ref{thm:min2} and Corollary~\ref{cor-min2}
on (unweighted) $\R^n$, but they have direct counterparts
valid in complete metric spaces 
equipped with doubling measures supporting a \p-Poincar\'e
inequality (and thus also on weighted $\R^n$ with a \p-admissible weight),
see Bj\"orn--Bj\"orn~\cite{BBbook} for more on the metric space theory
(note that Appendix~C therein gives a short survey on quasiminimizers).

Below we have chosen to give an $\R^n$ proof of Theorem~\ref{thm:min2}.
However, it carries over verbatim to metric spaces,
with the trivial modifications that $|\nabla u|$ is replaced by the minimal
\p-weak upper gradient $g_u$ (and similarly for the other gradients)
and $dx$ is replaced by $d\mu$.
Note that $g_u=|\nabla u|$ on unweighted and weighted $\R^n$,
see Appendices~A.1 and~A.2 in \cite{BBbook}.

\begin{proof}[Proof of Theorem~\ref{thm:min2}]
Let $0\le\phi\in W^{1,p}_{0}(\Om)$ be arbitrary and set 
\begin{align*}
A &=\{x\in\Om:\phi(x)>0\}, \\
A_1 &=\{x \in A : u_1(x)<u_2(x)\},\\
A_2 &=\{x \in A : u_2(x)<u_1(x)\}, \\
A_0 &=\{x \in A : v(x)>\max\{u_1(x),u_2(x)\}\},
\end{align*}
where $v=u+\phi$.
Note that $A=A_1\cup A_2\cup A_0$, though not pairwise disjointly.

We may assume that $\int_A|\grad u|^p\,dx<\infty$, as otherwise~\eqref{eq-qm-Rn}
holds trivially, since the triangle inequality together with the fact that
$\phi\in W^{1,p}(\Om)$ implies that
\[
\biggl( \int_A|\grad (u+\phi)|^p\,dx \biggr)^{1/p}
\ge \biggl( \int_A|\grad u|^p\,dx \biggr)^{1/p}
          - \biggl( \int_A|\grad\phi|^p\,dx \biggr)^{1/p} = \infty.
\]
Let $\phi_1=(\min\{u_2,v\}-u_1)_\limplus$ and note that
$0\le\phi_1\le\phi$, which implies that $\phi_1\in W^{1,p}_{0}(\Om)$.
The $Q_1$-quasisuperminimizing property of $u_1$ yields
\begin{equation}\label{eq-u1-on-A1}
\int_{\phi_1>0}|\grad u_1|^p\,dx
\le Q_1 \int_{\phi_1>0}|\grad (u_1+\phi_1)|^p\,dx. 
\end{equation}
Note that $\phi_1(x)>0$ if and only if $u_2(x)>u_1(x)$ and 
$u_1(x)+\phi(x)=v(x)>u_1(x)$,
which in turn holds exactly when $x\in A_1$.
Moreover, 
\[
u_1+\phi_1= \begin{cases}
        u_2, & \text{in } A_1\cap A_0, \\ 
        v, & \text{in } A_1\setm A_0.
	\end{cases}
\]
Multiplying~\eqref{eq-u1-on-A1} by $(Q_2-1)$ then gives
\begin{equation}\label{eq-A1-v}
(Q_2-1) \int_{A_1}|\grad u_1|^p\,dx
\le Q_1(Q_2-1) \biggl( \int_{A_1\cap A_0}|\grad u_2|^p\,dx 
         + \int_{A_1\setm A_0}|\grad v|^p\,dx \biggr). 
\end{equation}
Similarly, using $\phi_2=(\min\{u_1,v\}-u_2)_\limplus\in W^{1,p}_{0}(\Om)$ and the
$Q_2$-quasisuperminimizing property of $u_2$ we obtain (after multiplication
with $(Q_1-1)$),
\begin{equation}\label{eq-A2-v}
(Q_1-1) \int_{A_2}|\grad u_2|^p\,dx
\le Q_2(Q_1-1) \biggl( \int_{A_2\cap A_0}|\grad u_1|^p\,dx 
         + \int_{A_2\setm A_0}|\grad v|^p\,dx \biggr). 
\end{equation}

Next, let $\tphi_j=(v-u_j)_\limplus$, $j=1,2$.
Since $0\le \tphi_j\le\phi$, we have $\tphi_j\in W^{1,p}_{0}(\Om)$ 
and~\eqref{eq-qm-Rn} with $u_j$ and $\tphi_j$ gives
\begin{equation}  \label{eq-test-tphi-j}
\int_{\tphi_j>0}|\grad u_j|^p\,dx
\le Q_j \int_{\tphi_j>0}|\grad (u_j+\tphi_j)|^p\,dx. 
\end{equation}
Now, $\tphi_1(x)>0$ if and only if $\min\{u_1(x),u_2(x)\}+\phi(x)>u_1(x)$,
which is equivalent to $x\in A$ (i.e.\ $\phi(x)>0$)
and $u_2(x)+\phi(x)>u_1(x)$.
This in turn holds exactly if $x\in A_1$ or 
$u_2(x)\le u_1(x)<u_2(x)+\phi(x)=v(x)$, i.e.\ when $x\in A_1\cup A_0$.
Similarly, $\tphi_2(x)>0$ if and only if $x\in A_2\cup A_0$.
 
Since $u_j+\tphi_j=v$ whenever $\tphi_j>0$, the inequalities 
in~\eqref{eq-test-tphi-j} give
\begin{equation}   \label{eq-est-u1-v}
Q_2(Q_1-1) \int_{A_1\cup A_0} |\grad u_1|^p\,dx
\le Q_1 Q_2(Q_1-1) \int_{A_1\cup A_0} |\grad v|^p\,dx
\end{equation}
and 
\begin{equation}   \label{eq-est-u2-v}
Q_1(Q_2-1) \int_{A_2\cup A_0} |\grad u_2|^p\,dx
\le Q_1 Q_2(Q_2-1) \int_{A_2\cup A_0} |\grad v|^p\,dx,
\end{equation}
where we have also multiplied by $Q_2(Q_1-1)$ and $Q_1(Q_2-1)$, respectively.

Next, we shall sum up the inequalities \eqref{eq-A1-v}, \eqref{eq-A2-v},
\eqref{eq-est-u1-v} and \eqref{eq-est-u2-v} as follows.
The first term in the right-hand side of~\eqref{eq-A1-v} can be subtracted
from the left-hand side of~\eqref{eq-est-u2-v}, leaving 
\begin{align*}
&Q_1(Q_2-1) \int_{(A_2\cup A_0)\setm A_1} |\grad u_2|^p\,dx \\
&\kern 8em
= Q_1(Q_2-1) \biggl( \int_{A_2} |\grad u_2|^p\,dx
+ \int_{A_0\setm(A_1\cup A_2)} |\grad u_2|^p\,dx \biggr)
\end{align*}
therein.
Since $u=u_2$ in $A\setm A_1\supset A_2$, 
adding this to the left-hand side of~\eqref{eq-A2-v} results in
\begin{align}  \label{eq-contrib-LHS-A1}
&(Q_1(Q_2-1) + (Q_1-1)) \int_{A_2} |\grad u|^p\,dx
+ Q_1(Q_2-1) \int_{A_0\setm(A_1\cup A_2)} |\grad u|^p\,dx \nonumber\\
&\kern 5em
= (Q_1Q_2-1) \int_{A_2} |\grad u|^p\,dx
+ Q_1(Q_2-1) \int_{A_0\setm(A_1\cup A_2)} |\grad u|^p\,dx
\end{align}
as $|\grad u_2|$'s contribution to the left-hand side of the final sum.

Similarly, subtracting the first term in the right-hand side 
of~\eqref{eq-A2-v} from the left-hand side of~\eqref{eq-est-u1-v}, 
and adding the left-hand side of~\eqref{eq-A1-v} contributes with 
\begin{equation}  \label{eq-contrib-LHS-A2}
(Q_1Q_2-1) \int_{A_1} |\grad u|^p\,dx
+ Q_2(Q_1-1) \int_{A_0\setm(A_1\cup A_2)} |\grad u|^p\,dx
\end{equation}
to the left-hand side of the final sum.
Since $Q_1(Q_2-1)+Q_2(Q_1-1) \ge Q_1Q_2-1$, summing up~\eqref{eq-contrib-LHS-A1}
and~\eqref{eq-contrib-LHS-A2} shows that the left-hand side in the final sum
will be 
\begin{align*}
&(Q_1Q_2-1) \int_{A_1\cup A_2} |\grad u|^p\,dx
+ (Q_1(Q_2-1)+Q_2(Q_1-1)) \int_{A_0\setm(A_1\cup A_2)} |\grad u|^p\,dx \\
&\kern 15em 
\ge (Q_1Q_2-1) \int_{A_1\cup A_2\cup A_0} |\grad u|^p\,dx.
\end{align*}

We now turn to the right-hand side of the sum of 
\eqref{eq-A1-v}, \eqref{eq-A2-v}, \eqref{eq-est-u1-v} and \eqref{eq-est-u2-v}.
The remaining term in the right-hand side of~\eqref{eq-A1-v} is
\[
Q_1(Q_2-1) \int_{A_1\setm A_0}|\grad v|^p\,dx 
\le Q_1Q_2(Q_2-1) \int_{A_1\setm A_0}|\grad v|^p\,dx, 
\]
which together with the right-hand side of~\eqref{eq-est-u2-v} contributes
with
\[
Q_1Q_2(Q_2-1) \int_{A_1\cup A_2\cup A_0}|\grad v|^p\,dx
\]
to the right-hand side of the final sum.
Similarly, the remaining term in the right-hand side of~\eqref{eq-A2-v}
together with the right-hand side of~\eqref{eq-est-u1-v} gives
\[
Q_1Q_2(Q_1-1) \int_{A_1\cup A_2\cup A_0}|\grad v|^p\,dx
\]
in the right-hand side of the final sum.

As $A_1\cup A_2\cup A_0=A=\{x\in\Om:\phi(x)>0\}$, we have thus obtained
\[
(Q_1Q_2-1) \int_{\phi>0} |\grad u|^p\,dx
\le Q_1Q_2(Q_1+Q_2-2)\int_{\phi>0}|\nabla v|^p\,dx.
\]
Division by $Q_1Q_2-1$ concludes the proof of Theorem~\ref{thm:min2}.
(If $Q_1=Q_2=1$, the result follows from Theorem~\ref{thm-KiMa}.)
\end{proof}

\section{Lower bounds for the blowup}
\label{sect-lower}

Consider two quasisuperminimizers defined on some open set $\Om$.
More precisely let $u_j$ be a $Q_j$-quasisuperminimizer in $\Om$, $j=1,2$.
Also let $u=\min\{u_1,u_2\}$ and assume that $Q_1 \le Q_2$.
Theorem~\ref{thm-KiMa} then shows that $u$ is also a quasisuperminimizer,
and it gives an upper bound on the optimal quasisuperminimizer constant $Q$ 
for
$u$ (in terms of $Q_1$ and $Q_2$ only).
In Theorem~\ref{thm:min2} we improved upon this upper bound.

As far as we know, 
there have not been any examples showing that the optimal
$Q$ can be greater than $Q_2$. 
It is obvious that one cannot do any better than $Q_2$ 
in general (just consider the cases when $u_1 \ge u_2$ in $\Om$).
Note also that if $Q_1=1$, then Theorem~\ref{thm-KiMa}
shows that $u$ is a $Q_2$-quasisuperminimizer, and hence
the constants in Theorems~\ref{thm-KiMa} and~\ref{thm:min2} 
are sharp in this case.

In this section we will give several examples of pairs
of quasisuperminimizers such that their minimum
has a blowup of the quasisuperminimizer constant,
i.e.\ in the notation above we get $Q > Q_2$.
Even though the best (largest) bounds come just from one such example
we feel that it can be of interest to mention several different examples
as they may add a little to the knowledge on quasisuperminimizers.

Let us already now mention that in all our examples, the functions
$u_1$ and $u_2$ will not only be quasisuperminimizers,
but will in fact be quasiminimizers (with the same optimal constants)
as well as subminimizers
(i.e.\ $1$-quasisubminimizers).

We will also prove Theorem~\ref{thm-ex-blowup}, i.e.\ 
that whenever $Q_1 >1$, then  there are examples
showing that one can have $Q > Q_2$ and thus that 
$\max\{Q_1,Q_2\}$ is an upper bound only  when $Q_1=1$.

Our examples will all be on $\R$.
The reason for this is that this is almost the only case
when one can actually calculate optimal quasiminimizers and their constants.
As far as we know, the only higher-dimensional quasi(super)minimizers
for which their optimal quasi(super)minimizer constant has been determined,
and is strictly larger than $1$, are the power-type
quasi(super)minimizers studied in 
Bj\"orn--Bj\"orn~\cite{BBpower}.

The easiest example of a blowup in the quasisuperminimizing constant
is perhaps the following.
(It was incidentally also the first example we discovered.)

\begin{example} \label{ex-1}
Let $p=2$, 
\[
    u_1(x)=\begin{cases}
        \tfrac{2}{3}x, & 0 \le x \le \tfrac{1}{2}, \\
        \tfrac{4}{3}x-\tfrac{1}{3}, & \tfrac{1}{2} \le x \le 1, 
	\end{cases}
\quad \text{and} \quad
    u_2(x)=\begin{cases}
        \tfrac{5}{6}x, & 0 \le x \le \tfrac{4}{5}, \\
        \tfrac{5}{3}x-\tfrac{2}{3}, & \tfrac{4}{5} \le x \le 1. 
	\end{cases}
\]
Then $u_1$ and $u_2$ are $\frac{9}{8}$-quasisuperminimizers 
(with $\frac{9}{8}=1.125$ being the optimal constant), 
by Theorem~\ref{thm-ga-p} below.
We will call functions such as $u_1$ and $u_2$ \emph{one-corner functions}.

Let $u=\min\{u_1,u_2\}$.
Note that $u_1(x)=u_2(x)$ for $x=0,\tfrac{2}{3},1$.
Then
\[
    \int_0^1 (u')^2 \, dx= \tfrac{1}{2} \bigl(\tfrac{2}{3}\bigr)^2
	       + \bigl(\tfrac{2}{3}- \tfrac{1}{2}) \bigl(\tfrac{4}{3}\bigr)^2
	       + \bigl(\tfrac{4}{5}- \tfrac{2}{3}) \bigl(\tfrac{5}{6}\bigr)^2
	       + \tfrac{1}{5} \bigl(\tfrac{5}{3}\bigr)^2
	    = \tfrac{7}{6}.
\]
Comparison with $v(x)=x$ shows that 
$u$ is not a $Q$-quasisuperminimizer for any $Q<\tfrac{7}{6}=1.1666\ldots$\,.
The upper bounds given by Theorems~\ref{thm-KiMa} and~\ref{thm:min2}
 are
\[
    \tfrac{81}{64} = 1.265625
	\quad \text{and} \quad
	\tfrac{81}{68} = 1.191176\ldots.
\]

With $Q_1=Q_2=\frac{9}{8}$ and $p=2$ this example has been optimized,
i.e.\ $u_1$ and $u_2$ are one-corner functions with quotient between the slopes
$\ga=2$ and the choices of their corner points have been optimized
to get as large blowup as possible. 
For $p=2$ it is a rather straightforward (although a bit lengthy) 
calculation to do this optimization
by hand even for a general $Q=Q_1=Q_2$, and it leads to the  lower bound
$\frac{4}{3}Q -\frac{1}{3}$. 
We omit the details as we find better lower bounds below.

For other values of $p$ such optimization becomes more laborious,
and we decided to do some such calculations using Maple 16.
Some obtained values, correctly rounded to the nearest digit,
are shown in Table~\ref{table-1}.
These calculations suggest that for a given $Q=Q_1=Q_2$ the lower bounds
increase with $p$, but the dependence on $p$ is very small
(much smaller than we had expected). 

\begin{table}[htb]   
\begin{center}
\begin{tabular}{|l|r|r|r|r|}
\hline
\multicolumn{1}{|c|}{$Q$} & \multicolumn{1}{c|}{$p=1.2$} & \multicolumn{1}{c|}{$p=2$} 
& \multicolumn{1}{c|}{$p=100$} & 
	Upper bound \\
&&&&$2Q^2/(Q+1)$  \\ \hline
1.001 & 1.001333193  & 1.001333333 & 1.001333353 & 1.001500250 \\ \hline
1.01 &  1.013319341  & 1.013333333 & 1.013335243 & 1.015024876  \\ \hline
1.125 & 1.164635987  & 1.166666667 & 1.166948556 & 1.191176471 \\ \hline 
2  &  2.254420532&2.333333333            &  2.346323188 & 2.666666667 \\ \hline
10 & 11.80468177&13\phantom{.00000000} & 13.34762304 &18.18181818 \\ \hline
100&118.9796468&133\phantom{.0000000}&139.1598599 &198.0198020 \\ \hline
\end{tabular}
\end{center}
\caption{}\label{table-1}
\end{table}
\end{example}

\begin{remark}
Even though Example~\ref{ex-1} has been optimized it should be possible, 
by considering more general piecewise linear functions
and optimizing their parameters, to obtain better results, possibly even 
reaching the optimal constant for fixed
$p$, $Q_1$ and $Q_2$, at least when the result is specialized to $\R$.
Here, Theorem~4.1 in Martio~\cite{martioLiU09}  (which can also be found
as Theorem~C.2 in \cite{BBbook}) might be of help since it makes it possible
to approximate quasiminimizers by other (e.g.\ piecewise linear) 
functions with almost the same quasiminimizing constant.

Another necessary ingredient would be a good control of the best
quasiminimizing constant of such functions.
Lemmas~2 and~8 in Martio--Sbordone~\cite{MaSb} show that the quasiminimizing
constant is at most $(\sup |u'|/\inf|u'|)^{p-1}$.
In particular, all strictly increasing continuous piecewise linear functions 
(with finitely many corners) are
quasiminimizers, but the best constant is not easy to obtain.
Our Proposition~\ref{prop-optimal-zig-zag} below is a partial 
step in that direction.
\end{remark}

The above considerations open up for further numerical investigations 
of the blow up. 
We will not pursue this route as the following approach gives good 
lower bounds.

\begin{deff} \label{deff-determining}
If $u$ is a $Q$-quasiminimizer in $\Om \subset \R$ we say that
$u$ has the \emph{maximal \p-energy allowed by $Q$}
on an interval $I\subset\Om$ if
\[
       \int_I |u'|^p \, dx = Q \int_I |v'|^p \, dx
= Q \frac{(u(b)-u(a))^p}{(b-a)^{p-1}},
\]
where $v$ is the minimizer in $I$ with boundary values
$v=u$ on $\bdy I$, i.e.\ 
\[
v(x)=u(a)+\frac{u(b)-u(a)}{b-a}(x-a),
\]
where $a<b$ are the end points of $I$.
\end{deff}

\begin{example}  \label{ex-power}
For $\al>1-1/p$ and $x\in[0,1]$ let $v_\al(x)=x^\al$.
Theorem~6.2 in Bj\"orn--Bj\"orn~\cite{BBpower} with $n=1$ and $p>1$ implies 
that $v_\al$ is a $Q_\al$-quasiminimizer in $(0,1)$, where
\begin{equation}  \label{eq-def-Q-al}
Q_\al=\frac{\al^p}{p(\al-1)+1}
\end{equation}
is optimal.
In fact, if $1-1/p<\al\le1$, then $v_\al$ is a superminimizer and a
$Q_\al$-quasisubminimizer, while for $\al\ge1$, $v_\al$ is a subminimizer and a
$Q_\al$-quasi\-super\-minimizer in $(0,1)$.

A simple calculation also shows that for every $x_0\in(0,1)$,
\[
\int_0^{x_0} (v_\al')^p\,dx 
= \int_0^{x_0} \al^p x^{p(\al-1)}\,dx 
= \frac{\al^p x_0^{p(\al-1)+1}}{p(\al-1)+1}
= Q_\al \int_0^{x_0} (x_0^{\al-1})^p \,dx,
\] 
where the latter integral is the \p-energy of the linear segment from the
origin to the point $(x_0,v_\al(x_0))$.
Thus, for every $x_0\in(0,1]$, $v_\al$ has the maximal \p-energy in $(0,x_0)$ 
allowed by $Q_\al$. 

Note that, given $Q>1$, there are exactly two exponents $1-1/p<\al'<1<\al$
such that $Q=Q_\al=Q_{\al'}$.
This is easily shown by differentiating~\eqref{eq-def-Q-al} and noting
that the derivative is negative for $\al<1$ and positive for $\al>1$,
and that $Q_\al\to\infty$ as $\al\to 1-1/p$ and as $\al\to\infty$.
We let 
\begin{equation}    \label{eq-def-uQ-ubQ}
u_Q(x)=x^\al \quad \text{and} \quad \ub_Q(x)=1-(1-x)^{\al'}.
\end{equation}
Then $u_Q(0)=\ub_Q(0)=0$ and $u_Q(1)=\ub_Q(1)=1$.
Note that both $u_Q$ and $\ub_Q$ are subminimizers and $Q$-quasisuperminimizers 
in $(0,1)$.
Moreover, $u_Q$ has the maximal \p-energy allowed by $Q$ on each interval 
$(0,x_0)$, while $\ub_Q$ has the maximal \p-energy 
allowed by $Q$ on each interval $(x_0,1)$.
\end{example}

We can now use the functions $u_Q$ and $\ub_Q$ above to 
prove Theorem~\ref{thm-ex-blowup}.

\begin{proof}[Proof of Theorem~\ref{thm-ex-blowup}.]
By Theorem~\ref{thm:min2}, the function $u:=\min\{u_{Q_1},\ub_{Q_2}\}$ is a
quasisuperminimizer in $(0,1)$ with a quasisuperminimizing constant 
given by~\eqref{eq-def-q-for-min}.
We shall show that $u$ is not a $Q_2$-quasisuperminimizer.
To do this, it suffices to show that the \p-energy
\[
\int_0^1 (u')^p\,dx>Q_2.
\]
Since $\ub_{Q_2}$ is a subminimizer in $(0,1)$ 
(by Theorem~6.2 in~\cite{BBpower}), we have that
\[
\int_0^{x_0} (u_{Q_1}')^p\,dx 
 > \int_0^{x_0} (\ub_{Q_2}')^p\,dx,
\]
where the strict inequality follows from the uniqueness of
solutions to obstacle problems (see e.g.\ Theorem~7.2 in~\cite{BBbook})
and from the fact that $u_{Q_1}<\ub_{Q_2}$ in a set of
positive measure. Hence
\begin{equation}   \label{eq-energy-ge-Q1}
\int_0^1 (u')^p\,dx 
> \int_0^1 (\ub_{Q_2}')^p\,dx=Q_2,
\end{equation}
which finishes the proof.
\end{proof}

Theorem~\ref{thm-ex-blowup} shows that in general there is a blow up in the 
quasisuperminimizing constant when taking minimum of two quasisuperminimizers,
but it does not give any quantitative estimate of the blow up.
Next, we shall give some lower bounds for the blow up.

Given $Q_1,Q_2>1$, let $1-1/p < \al_2<1 < \al_1$
be such that $Q_1=Q_{\al_1}$, $Q_2=Q_{\al_2}$ 
and $u_{Q_1}$ and $\ub_{Q_2}$ are the corresponding quasiminimizers.
Let $x_0$ be the unique number in $(0,1)$ such that $u_{Q_1}(x_0)=\ub_{Q_2}(x_0)$,
i.e.\ the unique solution of the equation 
\begin{equation} \label{eq-x0}
x_0^{\al_1} + (1-x_0)^{\al_2} =1.
\end{equation}
(To see that there is a unique solution, consider $w=\ub_{Q_2}-u_{Q_1}$
and note that $w(0)=w(1)=0$. 
Since $w'(0)>0$ and $w'(1)=\infty$,
there is at least one $x\in(0,1)$ such that $w(x)=0$.
Next, a simple calculation shows that $w'(x)=0$
if and only if 
\[
    v(x):= x^\be(1-x) =\biggl(\frac{\alp_1}{\alp_2}\biggr)^{1/(\alp_2-1)},
    \quad \text{where } \be = \frac{\alp_1-1}{1-\alp_2}>0.
\]
As $v(0)=v(1)=0$ and $v(x)$ attains its maximum  
at (and only at) $x=\be/(\be+1)$ we see that
there are at most two solutions to $w'(x)=0$,
and thus there can be at most one solution to \eqref{eq-x0},
which must lie  in between those two local extrema of $w$.

The \p-energy of $u=\min\{u_{Q_1},\ub_{Q_2}\}$ is then
(by comparing with the \p-energies of the linear
segments connecting the origin, the point $(x_0,x_0^{\al_1})$ and $(1,1)$)
\begin{equation}   \label{eq-def-tQ}
\int_0^1 (u')^p\,dx = Q_1 x_0^{p(\al_1-1)+1} + Q_2(1-x_0)^{p(\al_2-1)+1} =: \tQ.
\end{equation}
Here we have used that both $u_{Q_1}$ and $\ub_{Q_2}$ have the 
maximal energies allowed by $Q_1$ and $Q_2$ in the respective intervals.

Note that $x_0$ is uniquely determined by $Q_1$ and $Q_2$ (through $\al_1$ 
and $\al_2$) and thus $\tQ$ depends only on $Q_1$ and $Q_2$ (and on $p$).
Comparing this \p-energy with the \p-energy of the linear function $u_1(x)=x$
shows that $\tQ$ is a lower bound for the quasisuperminimizing constant of $u$.
We would therefore like to estimate $\tQ$.

The lower bounds in Table~\ref{table-2} have been obtained by letting Maple 16 
evaluate $\tQ$ for some values of $Q:=Q_1=Q_2$ and are compared with
the upper bound obtained in Theorem~\ref{thm:min2}.
Note that these lower bounds are considerably larger,
and much closer to the upper bounds, than those
in Table~\ref{table-1}.

\begin{table}[t]
\begin{center}
\begin{tabular}{|l|r|r|r|r|r|}
\hline
\multicolumn{1}{|c|}{$Q$} & \multicolumn{1}{c|}{$p=1.2$} & \multicolumn{1}{c|}{$p=2$} 
& \multicolumn{1}{c|}{$p=100$} & 
	Upper bound  & \multicolumn{1}{c|}{$\tQ^{\vphantom{k^{k^k}}}$ in~\eqref{Qt}}\\
&&&&\multicolumn{1}{c|}{$2Q^2/(Q+1)$} & \multicolumn{1}{c|}{for $p=2$}\\ \hline
1.001 &1.001480628 &1.001480660 &1.001480665 &1.001500250&1.001373803 \\ \hline
1.01  &1.014821935 &1.014825154 &1.014825593 &1.015024876&1.013873175  \\ \hline
1.125 &1.187625011 &1.188100103 &1.188165836 &1.191176471&1.180555556 \\ \hline 
2     &2.599606519 &2.619135721 &2.622161265 &2.666666667&2.601317394 \\ \hline
10    &17.45294063 &17.67321156 &17.72170691 &18.18181818&17.66438145 \\ \hline
100   &195.7168148 &196.3948537 &196.5955633 &198.0198020&196.3936712 \\ \hline
\end{tabular}
\end{center}
\caption{}\label{table-2}
\end{table}

Our next aim is to obtain more explicit estimates of $\tQ$. 
Calculating $\tQ$ in~\eqref{eq-def-tQ}
involves first solving the equation~\eqref{eq-def-Q-al}
twice for $\al$,
so that $Q_1=Q_{\al_1}$ and $Q_2=Q_{\al_2}$ as above, then finding
$0<x_0<1$ such that $x_0^{\al_1}+(1-x_0)^{\al_2}=1$, and finally evaluating
$\tQ$ for the obtained values of $\al_1$, $\al_2$ and $x_0$. 
This can be done numerically but not analytically 
(not even for $p=2$).

A somewhat weaker, but more explicit, estimate for $\tQ$ can be obtained 
in the following way.
Let $x_1\in(0,1)$ be such that $u_{Q_1}(x_1)=\al_2 x_1$, i.e.\ 
$x_1=\al_2^{1/(\al_1-1)}$.
Since $u_{Q_1}(0)=0$, $\ub'_{Q_2}(0)=\al_2$ and both $u_{Q_1}$ and $\ub_{Q_2}$ 
are convex, we have that
\[
\ub_{Q_2}(x)>\al_2 x>u_{Q_1}(x) \quad \text{for all } x\in(0,x_1). 
\]
In particular, $x_1<x_0$. 

As $\ub_{Q_2}$ is a subminimizer in $(0,1)$ and 
$\ub_{Q_2} > \max\{ u_{Q_1},\al_2 x\}$ in $(0,x_0)$, we then obtain 
(using also that $u_{Q_1}(x_0)=\ub_{Q_2}(x_0)$)
\begin{equation}  \label{eq-ub-Q2-submin-0-x0}
\int_0^{x_0} (\ub'_{Q_2})^p\,dx 
   < \int_0^{x_1} \al_2^p\,dx + \int_{x_1}^{x_0} (u'_{Q_1})^p\,dx,
\end{equation}
where the strict inequality follows as in~\eqref{eq-energy-ge-Q1}
from the uniqueness of solutions to obstacle problems.
From the fact that $u_{Q_1}$ has the maximal \p-energy allowed by $Q_1$ 
on the interval $(0,x_1)$ we can conclude that
\[
\int_0^{x_0} (u'_{Q_1})^p\,dx 
= \int_0^{x_1} (u'_{Q_1})^p\,dx + \int_{x_1}^{x_0} (u'_{Q_1})^p\,dx 
= Q_1\int_0^{x_1}\al_2^p\,dx + \int_{x_1}^{x_0} (u'_{Q_1})^p\,dx.
\]
Together with~\eqref{eq-ub-Q2-submin-0-x0} this yields
\begin{align}  \label{eq-blowup-1}
\tQ-Q_2 &= \int_0^1((u')^p-(\ub'_{Q_2})^p)\,dx 
= \int_0^{x_0} (u'_{Q_1})^p \,dx - \int_0^{x_0}(\ub'_{Q_2})^p\,dx \\
&> (Q_1-1) \int_0^{x_1} \al_2^p\,dx = (Q_1-1) \al_2^p x_1 
= (Q_1-1) \al_2^{p+1/(\al_1-1)}. \nonumber
\end{align}
(This gives another proof of Theorem~\ref{thm-ex-blowup}.)
A similar argument shows that
\begin{align}  \label{eq-blowup-2}
\tQ-Q_1 > (Q_2-1) \al_1^p (1-x_2)= (Q_2-1)\al_1^{p+1/(\al_2-1)},
\end{align}
where $x_2\in(0,1)$ is the solution of 
$\ub_{Q_2}(x_2)=1-\al_1(1-x_2)$, i.e.\ $1-x_2=\al_1^{1/(\al_2-1)}$.
Note that $x_2>x_0$.
Depending on the particular values of $p$, $Q_1$ and $Q_2$, one of
\eqref{eq-blowup-1} and~\eqref{eq-blowup-2} may be better than the
other.

For $p=2$, when $\al_1$ and $\al_2$ can be explicitly calculated in terms of 
$Q_1$ and $Q_2$, we get after simplification (and for $Q_1\le Q_2$) 
that the blow up is at least the maximum of
\begin{align}
\tQ-Q_2 &> (Q_1-1) \Bigl(Q_2+\sqrt{Q_2^2-Q_2} \Bigr)^{1-\sqrt{Q_1/(Q_1-1)}}, \label{Qt} \\
\tQ-Q_2 &> (Q_1-1) \Bigl(Q_2-\sqrt{Q_2^2-Q_2} \Bigr)^{1+\sqrt{Q_1/(Q_1-1)}}. \notag
\end{align}
For the values considered in Tables~\ref{table-1} and~\ref{table-2}, the first
estimate above is quite close to those in Table~\ref{table-2} and better
than those in Table~\ref{table-1}.

For $p\ne2$, we cannot obtain such explicit expressions. 
However, using
Remark~\ref{rmk-Qag} and \eqref{eq-x0-al} below we can write
\[
\al_1 = \frac{p-1}{p} \frac{\ga_1^{p}-1}{\ga_1^{p-1}-1}
\quad \text{and} \quad
\al_2 = \frac{p-1}{p} \frac{\ga_2^{p}-1}{\ga_2^{p}-\ga_2}
\]
in terms of the quotients $\ga_1$ 
and $\ga_2$ associated with $Q_1$ and $Q_2$ as in~\eqref{eq-Q-without-k}
by means of Proposition~\ref{prop-Q-increasing} below. 
A direct calculation then gives
\begin{align*}
\al_1^{p+1/(\al_2-1)} 
     &= \biggl( \frac{p(\ga_1^{p-1}-1)}{(p-1)(\ga_1^p-1)} 
        \biggr)^{\frac{p(p-1)(\ga_2-1)}{\ga_2^p-1-p(\ga_2-1)}}, \\
\al_2^{p+1/(\al_1-1)} 
     &= \biggl( \frac{(p-1)(\ga_2^p-1)}{p(\ga_2^{p}-\ga_2)} 
        \biggr)^{\frac{p(p-1)\ga_1^{p-1}(\ga_1-1)}
                     {p\ga_1^{p-1}(\ga_1-1)-(\ga_1^p-1)}}.
\end{align*}
In particular, for $p=2$ and $Q_1=Q_2=Q$ (and thus $\ga_1=\ga_2=\ga$),
these formulas simplify to
\[
\al_1^{p+1/(\al_2-1)} = \biggl( \frac{2}{\ga+1} \biggr)^{2/(\ga-1)},
\]
which is increasing with respect to $\ga$ and has limit
$1/e$ as $\ga \to 1 \limplus$, while
\[
\al_2^{p+1/(\al_1-1)} = \biggl( \frac{\ga+1}{2\ga} \biggr)^{2\ga/(\ga-1)}
= \biggl(1- \frac{\ga-1}{2\ga} \biggr)^{2\ga/(\ga-1)} < \frac{1}{e}
\]
for all $\ga>1$.
Thus
$\al_2^{p+1/(\al_1-1)} < 1/e < \al_1^{p+1/(\al_2-1)} < 1$
for all $\ga>1$, and hence 
\[
\tQ > Q + (Q-1)/e
\] 
in this case, which is better than the estimate
$\frac{4}{3}Q - \frac{1}{3}= Q+ \frac{1}{3}(Q-1)$ in Example~\ref{ex-1},
but worse than~\eqref{Qt}.

\section{An upper bound for three (or more) functions}
\label{sect-3-or-more}

It is possible to get estimates for the quasisuperminimizing
constant for the minimum of several quasisuperminimizers 
by iteratively using  the estimate for the minimum of two functions. 
The obtained estimate often depends on the order in which the minima are taken. 
This suggests that better estimates could be obtained, if we directly consider the minimum of all of the involved functions and as in the proof of
Theorem~\ref{thm:min2} use all the information 
that is available from the fact that all the functions are quasisuperminimizers with the original constants.

To estimate the quasisuperminimizer constant for the minimum $u$ of $N$ 
quasisuperminimizers $u_i$, let $0 \le \phi\in W^{1,p}_0(\Om)$ be 
arbitrary and set
$v=u + \phi$.
For each $i=1,\ldots,N$ and $S \subset \{1,\ldots,N\}$ with $i\notin S$ let 
\[
u_{S}=\min_{s\in S}\{u_{s},v\}
\quad \text{and} \quad 
\phi_{i,S}=(u_S-u_i)_\limplus.
\]
Then $0\le\phi_{i,S}\le\phi$ and hence $\phi_{i,S} \in W^{1,p}_0(\Om)$.
(Note that $\phi_{i,S}=0$ if $i\in S$.)
Testing~\eqref{eq-qm-Rn} for each $u_i$ with $\phi_{i,S}$ provides us with
$N 2^{N-1}$ inequalities of the form
\begin{equation}   \label{eq-ineqs-for-N-fcions}
\int_{u_i<u_S} |\nabla u_i|^p\,dx\leq Q_{i}\int_{u_i<u_S} |\nabla u_S|^p\,dx.
\end{equation}

This leads to a linear programming problem, 
which is solvable in polynomial time with respect to the number of the conditions. 

\begin{remark}   \label{rem-sharp-ineqs}
When formulating the linear programming problem one
can without loss of generality assume that
the sets $\{x \in \Om : u_i(x)=u_j(x)\}$, $1 \le i < j \le N$,
all have measure zero;
this follows from  the fact that we can 
approximate each $u_i$  from below using $u_i-q_i$, with rational $q_i  \ge 0$,
and the corresponding minima increase to $u$, while preserving the
quasisuperminimizing constant,
by Theorem~6.1 in Kinnunen--Martio~\cite{KiMa03}.
\end{remark}

For example, when $N=3$, we obtain $12$ conditions. We used \emph{Mathematica} 
to solve this linear programming problem and obtained the following result. 
Below we provide a direct proof without relying on \emph{Mathematica}.
However, the \emph{Mathematica} calculation shows that the constant
obtained here is the best possible using only the information
above.

\begin{thm} \label{thm-3Q}
Let $u_{i}$ be a $Q_{i}$-quasisuperminimizer 
 for $i=1,2,3$.
Let 
\[
 P=  2Q_1 Q_2 Q_3 - Q_1 Q_2 -Q_2 Q_3 - Q_3 Q_1 + 1
\]
and, with $\{j,k\}=\{1,2,3\} \setm \{i\}$, 
\[
   R_i = \begin{cases}
        0, & \text{if } Q_j=Q_k=1,  \\
        \displaystyle\frac{(Q_j-1)(Q_k-1)(Q_j-1+Q_k-1)}{Q_j Q_k-1}, &
        \text{otherwise}.
        \end{cases}
\]
 Then $\min\{u_{1},u_{2},u_{3}\}$ is a $\bQ$-quasisuperminimizer with 
\[
\bQ=\frac{Q_1 Q_2 Q_3}{P} (R_1 + R_2 + R_3)
\]
unless 
at least two of the $Q_i$ equal $1$, 
say $Q_2=Q_3=1$, in which case $\bQ=Q_1$.
\end{thm}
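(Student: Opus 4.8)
\textbf{Proof plan for Theorem~\ref{thm-3Q}.}
The plan is to imitate the proof of Theorem~\ref{thm:min2} but with $N=3$, using the full collection of $12$ test inequalities~\eqref{eq-ineqs-for-N-fcions} and then combining them with nonnegative coefficients so as to reproduce the energy of $u=\min\{u_1,u_2,u_3\}$ on the left and a multiple of the energy of $v=u+\phi$ on the right. Fix $0\le\phi\in W^{1,p}_0(\Om)$, set $v=u+\phi$ and $A=\{\phi>0\}$, and partition $A$ (up to measure zero, which is permissible by Remark~\ref{rem-sharp-ineqs}) into the regions
\[
A_i=\{x\in A: u_i(x)<u_j(x) \text{ and } u_i(x)<u_k(x)\},\quad \{i,j,k\}=\{1,2,3\},
\]
together with the ``overshoot'' set $A_0=\{x\in A: v(x)>\max\{u_1,u_2,u_3\}\}$; on $A_i\setm A_0$ one has $u=u_i$ and $v\le\max_{s\ne i}u_s$, while on $A_0$ one has $v>u_s$ for every $s$. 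As in the two-function proof, the key observation is that for a test function $\phi_{i,S}=(\min_{s\in S}\{u_s,v\}-u_i)_\limplus$ the set $\{u_i<u_S\}$ decomposes into pieces of the $A_i$'s and $A_0$, and on each such piece $u_i+\phi_{i,S}$ equals either $v$ or one of the $u_s$, $s\in S$; this is what lets the right-hand sides of some inequalities be cancelled against the left-hand sides of others.

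The combinatorial heart of the argument is to choose the multipliers. For the pair $(i,S)$ with $S=\{j,k\}$ (the full complement), scaling~\eqref{eq-ineqs-for-N-fcions} gives control of $\int_{A_i\cup A_0}|\grad u_i|^p$ by $Q_i\int_{A_i\cup A_0}|\grad v|^p$, analogous to~\eqref{eq-est-u1-v}; for the singletons $S=\{j\}$ one gets, on the part of $\{u_i<u_j\}$ lying in $A_i$, that $u_i+\phi_{i,\{j\}}$ is $u_j$ on $A_i\cap A_0$ and $v$ on $A_i\setm A_0$, i.e.\ the three-function analogue of~\eqref{eq-A1-v}--\eqref{eq-A2-v}. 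I would assign to each of these inequalities a product of factors of the shape $(Q_m-1)$ for the indices $m$ not directly involved and $Q_m$ for some of those that are, mirroring how~\eqref{eq-A1-v} was multiplied by $(Q_2-1)$ and~\eqref{eq-est-u1-v} by $Q_2(Q_1-1)$; the quantities $P$ and $R_i$ in the statement are exactly what these weighted sums telescope to. Concretely, on each $A_i$ the coefficients of $|\grad u_i|^p=|\grad u|^p$ coming from the various inequalities must add up to (a common multiple of) $P$, while on $A_0$ the surviving coefficients of $|\grad u|^p$ must be \emph{at least} that multiple of $P$ — this inequality (the analogue of $Q_1(Q_2-1)+Q_2(Q_1-1)\ge Q_1Q_2-1$, and of $R_j+R_k$ type expressions dominating the $P$-contribution) is where one simply discards the excess, legitimately since all terms are nonnegative. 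On the right, the leftover $v$-energy terms, all supported on subsets of $A$, recombine to $Q_1Q_2Q_3(R_1+R_2+R_3)\int_A|\grad v|^p$; dividing by $P$ yields $\bQ$.

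The main obstacle is bookkeeping: verifying that there really exist nonnegative multipliers making every $|\grad u_i|^p$-coefficient on $A_i$ equal exactly $P$ (after a global rescaling) while the coefficients on $A_0$ come out $\ge P$ and the $|\grad u_i|^p$ with $u_i$ restricted to $A_j$, $j\ne i$, cancel cleanly. The honest way to find the multipliers is to set up the $12\times(\text{number of region/gradient cells})$ system and solve it; the paper says \emph{Mathematica} did this, and confirmed optimality of the resulting constant among all combinations of~\eqref{eq-ineqs-for-N-fcions}. For a human-readable proof one would present the explicit multiplier for each $(i,S)$ — by symmetry in $\{1,2,3\}$ there are essentially two orbit types (full complement and singleton), so only a handful of distinct coefficients — and then check the three identities ``$A_i$-sum $=P$'', ``$A_0$-sum $\ge P$'', ``cross terms cancel'', plus the right-hand-side recombination, each of which is a finite algebraic identity/inequality in $Q_1,Q_2,Q_3$. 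One should also keep an eye on degenerate cases: when some $Q_i=1$ the corresponding $\phi_{i,S}$-inequalities become equalities and $R$'s vanish; if two of the $Q_i$ equal $1$ the formula for $\bQ$ would be $0/0$, so that case is handled separately by noting $u=\min\{u_i,u_j,u_k\}=\min\{u_1,\text{(a }1\text{-quasisuperminimizer)}\}$ and invoking Theorem~\ref{thm-KiMa} (or Theorem~\ref{thm:min2}) with one constant equal to $1$, giving $\bQ=Q_1$, exactly as claimed. Finally, as noted before Theorem~\ref{thm:min2}, the $\R^n$ argument transfers verbatim to the metric setting by replacing $|\grad\cdot|$ with minimal $p$-weak upper gradients and $dx$ with $d\mu$.
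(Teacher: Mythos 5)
Your overall strategy---taking a nonnegative linear combination of the twelve inequalities \eqref{eq-ineqs-for-N-fcions}, cancelling all cross terms $|\grad u_j|^p$, $j\ne i$, on the set where $u=u_i$, and reading off $\bQ$ as the ratio of the surviving $v$-coefficient to the surviving $u$-coefficient---is exactly the approach of the paper. But as written the proposal has a genuine gap: the entire quantitative content of the theorem, namely that multipliers with the required cancellation properties exist, are admissible, and produce precisely $\bQ=Q_1Q_2Q_3(R_1+R_2+R_3)/P$, is asserted (``$P$ and $R_i$ are exactly what these weighted sums telescope to'') rather than proved, and you explicitly defer the computation to ``setting up the system and solving it''---which is precisely the step that constitutes the proof. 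Contrary to your reading, the paper does not lean on \emph{Mathematica} for this step: it writes down the cancellation conditions by hand, reduces the resulting $12\times 12$ system by symmetry to six equations, solves it with Cramer's rule, and then checks that among the three right-hand-side constants $Q_{A_0}$, $Q_{A_1}$, $Q_{A_2}$ the first is the largest, which yields the stated formula. Without exhibiting the multipliers and verifying these identities, no value of $\bQ$ has actually been established.

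A second, more technical, problem is that your region decomposition is too coarse to carry out the bookkeeping. With three functions it is not enough to split $A=\{\phi>0\}$ into the sets where $u=u_i$ plus a single overshoot set $\{v>\max_s u_s\}$: for a singleton test function $\phi_{i,\{j\}}$ the competitor $u_i+\phi_{i,\{j\}}=\min\{u_j,v\}$ equals $u_j$ or $v$ according to whether $v$ exceeds the \emph{middle} function $u_j$, and likewise $\min\{u_j,u_k,v\}$ may equal $u_j$ off your overshoot set. This is why the paper works, for each permutation $(ijk)$, inside $\{u_i<u_j<u_k\}$ with three sets $A_0$, $A_1$, $A_2$ determined by the position of $v$ relative to $u_j$ and $u_k$ (ties being negligible by Remark~\ref{rem-sharp-ineqs}); only on that finer partition can the cancellation conditions be formulated, and they lead naturally to the paper's normalization (coefficient of $|\grad u|^p$ identically $1$ on all three sets, with the slack absorbed on the right via $Q_{A_1}\le Q_{A_0}$ and $Q_{A_2}=Q_i\le\bQ$) rather than the ``at least $P$ on the overshoot set'' pattern you carried over from the two-function proof. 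Finally, note that the fallback to Theorem~\ref{thm:min2} is needed not only when two of the $Q_i$ equal $1$ but whenever any $Q_i=1$, since the multipliers involve $Q_i/(Q_i-1)$; with that adjustment your treatment of the degenerate cases is fine.
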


It is easily verified that the choice $Q_3=1$  gives
the expression in Theorem~\ref{thm:min2}.
When $Q_1=Q_2=Q_3$, it is also easy to verify that the constant
gets the following simpler form.

\begin{cor}\label{cor-3Q} 
Let $u_{1},u_{2},u_{3}$ be $Q$-quasisuperminimizers. 
Then $\min\{u_{1},u_{2},u_{3}\}$ is a $6Q^{3}/(Q+1)(2Q+1)$-quasisuperminimizer.
\end{cor}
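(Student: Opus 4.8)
The plan is to deduce Corollary~\ref{cor-3Q} directly from Theorem~\ref{thm-3Q} by substituting $Q_1=Q_2=Q_3=Q$ into the expressions for $P$, $R_i$ and $\bQ$. First I would note that since we only need the case $Q>1$ (when $Q=1$ the statement reads that the minimum of three $1$-quasisuperminimizers is a $1$-quasisuperminimizer, which is immediate and also covered by the exceptional clause of Theorem~\ref{thm-3Q}), all the ``otherwise'' branches apply and no case distinction is needed.

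The computation proceeds in three short steps. With all $Q_i=Q$ we get
\[
P = 2Q^3 - 3Q^2 + 1 = (Q-1)(2Q^2-Q-1) = (Q-1)^2(2Q+1),
\]
where I would factor the cubic by observing that $Q=1$ is a root. Next, each $R_i$ becomes
\[
R_i = \frac{(Q-1)(Q-1)\bigl((Q-1)+(Q-1)\bigr)}{Q^2-1}
    = \frac{2(Q-1)^3}{(Q-1)(Q+1)} = \frac{2(Q-1)^2}{Q+1},
\]
so that $R_1+R_2+R_3 = 6(Q-1)^2/(Q+1)$. Finally, substituting into $\bQ = Q_1Q_2Q_3(R_1+R_2+R_3)/P$ gives
\[
\bQ = \frac{Q^3}{(Q-1)^2(2Q+1)}\cdot\frac{6(Q-1)^2}{Q+1}
    = \frac{6Q^3}{(Q+1)(2Q+1)},
\]
after the $(Q-1)^2$ factors cancel. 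This is exactly the claimed constant.

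There is no real obstacle here; the only thing to be careful about is the factorization $2Q^3-3Q^2+1 = (Q-1)^2(2Q+1)$ (it is worth double-checking that $Q=1$ is in fact a double root of $P$, which it is, since $P'(Q) = 6Q^2 - 6Q$ also vanishes at $Q=1$), and making sure the cancellation of $(Q-1)^2$ is legitimate, i.e.\ that we are in the regime $Q>1$. Since the case $Q=1$ is trivial and handled separately by the exceptional clause of Theorem~\ref{thm-3Q}, the corollary follows.
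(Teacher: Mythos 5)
Your computation is correct and is exactly what the paper intends: the corollary is obtained by substituting $Q_1=Q_2=Q_3=Q$ into Theorem~\ref{thm-3Q}, factoring $P=(Q-1)^2(2Q+1)$, simplifying $R_i=2(Q-1)^2/(Q+1)$, and cancelling (with the trivial $Q=1$ case handled by the exceptional clause). No gaps; same approach as the paper's "easy to verify" remark, just written out.
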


This estimate is slightly better than what we would have obtained 
 by iterating Theorem~\ref{thm:min2}: 
First, the minimum of $u_{1}$ and $u_{2}$ is a $2Q^2/(Q+1)$-quasisuperminimizer, 
and then the minimum of $\min\{u_{1},u_{2}\}$ and $u_{3}$, 
i.e.\ the minimum of a $2Q^2/(Q+\nobreak 1)$- and a $Q$-quasisuperminimizer,  is 
a $2Q^3(3Q+2)/(Q+1)(2Q^2+2Q+1)$-quasi\-super\-minimizer,
by Theorem~\ref{thm:min2}.
However both of these estimates give values close to $3Q$ for large values of $Q$.

We now explain how Theorem~\ref{thm-3Q} can be proved without 
the use of Mathematica. 

\begin{proof}[Proof of Theorem~\ref{thm-3Q}]
If $\min\{Q_1,Q_2,Q_3\}=1$, we have already noticed that the result
follows from Theorem~\ref{thm:min2}, so we assume that $\min\{Q_1,Q_2,Q_3\}>1$.

The proof is similar to the proof of Theorem~\ref{thm:min2}, it just requires
more book keeping.
There are
$12$ inequalities of the form~\eqref{eq-ineqs-for-N-fcions} at our
disposal. 
More precisely, 
for $S=\emptyset$, there are three inequalities
\[
\int_{u_i<v} |\nabla u_i|^p\,dx\leq Q_i \int_{u_i<v} |\nabla v|^p\,dx,
\tag{$E_i$}
\]
$i=1,2,3$.
For singleton $S=\{j\}$, $j\ne i$, we obtain six possible inequalities, namely
\[
\int_{u_i<\min\{u_j,v\}} |\nabla u_i|^p\,dx 
\leq Q_i\int_{u_i<\min\{u_j,v\}} |\nabla \min\{u_j,v\}|^p\,dx,
\tag{$E_{ij}$}
\]
$i,j=1,2,3$, $i\ne j$. 
Finally, for $S=\{j,k\}$, $i\notin S$, we have three inequalities
\[
\int_{u_i<\min\{u_j,u_k,v\}} |\nabla u_i|^p\,dx 
\leq Q_i \int_{u_i<\min\{u_j,u_k,v\}} |\nabla \min\{u_j,u_k,v\}|^p\,dx,
\tag{$\widehat{E}_i$}
\]
$i=1,2,3$.

Depending on the choice of the set $S$ and on the sizes of the functions
$u_1$, $u_2$, $u_3$ and $v$, the sets of integration in these equations
split into three different sets, where also $u_S=\min_{s\in S}\{u_{s},v\}$
equals different $u_i$ or $v$.

Let $\pi=(ijk)$ be a fixed but arbitrary permutation of the set $\{1,2,3\}$.
Then the following subsets of the set 
$A=\{x\in\Om: \phi(x)>0 \text{ and } u_i(x)<u_j(x)<u_k(x)\}$ are of interest:
\begin{align*}
A_0 &= \{x\in A: u_i(x)<u_j(x)<u_k(x)<v(x)\}, \\
A_1 &= \{x\in A: u_i(x)<u_j(x)<v(x)<u_k(x)\}, \\
A_2 &= \{x\in A: u_i(x)<v(x)<u_j(x)<u_k(x)\}.
\end{align*}
(Note that by Remark~\ref{rem-sharp-ineqs}, we can assume that
all the sets $\{x \in \Om : u_i(x)=u_j(x)\}$ have measure zero.)
We shall now check in which of the above inequalities 
these sets appear as parts of the sets of integration.
We shall also keep track of which function then appears in the left-hand side
(LHS) and in the right-hand side (RHS).
It is immediate that none of $A_0$, $A_1$ and $A_2$ is present in the equations
\Eji, \Eki, \Ekj, \Ehj{} or \Ehk.
The set $A_2$ appears only in \Ei, \Eij, \Eik{} and \Ehi,
and the function in the LHS is then always $u_i$, while the one in the RHS is
always $v$.
For the sets $A_0$ and $A_1$, the choices of funtions are more complicated 
and are summarized in Table~\ref{table-sets-of-integration-2}.

\begin{table}[tbh]
\begin{center}
\begin{tabular}{|c|c||c|c||c|} 
\hline
Integral  & Appears in & Gradient & Gradient & Constant \\
over the set & the inequality & in the LHS & in the RHS & in the RHS \\
\hline
\hline
$A_0$ & \Ei & $\grad u_i$ & $\grad v$ & $Q_i$\vphantom{$a_{j^k}^{k^{k^k}}$} \\
\hline
$A_0$ & \Ej & $\grad u_j$ & $\grad v$ & $Q_j$\vphantom{$a_{j^k}^{k^{k^k}}$} \\
\hline
$A_0$ & \Ek & $\grad u_k$ & $\grad v$ & $Q_k$\vphantom{$a_{j^k}^{k^{k^k}}$} \\
\hline
$A_0$ & \Eij & $\grad u_i$ & $\grad u_j$ & $Q_i$\vphantom{$a_{j^k}^{k^{k^k}}$} \\
\hline
$A_0$ & \Eik & $\grad u_i$ & $\grad u_k$ & $Q_i$\vphantom{$a_{j^k}^{k^{k^k}}$} \\
\hline
$A_0$ & \Ejk & $\grad u_j$ & $\grad u_k$ & $Q_j$\vphantom{$a_{j^k}^{k^{k^k}}$} \\
\hline
$A_0$ & \Ehi & $\grad u_i$ & $\grad u_j$ & $Q_i$\vphantom{$a_{j^k}^{k^{k^k}}$} \\
\hline
\hline
$A_1$ & \Ei & $\grad u_i$ & $\grad v$ & $Q_i$\vphantom{$a_{j^k}^{k^{k^k}}$} \\
\hline
$A_1$ & \Ej & $\grad u_j$ & $\grad v$ & $Q_j$\vphantom{$a_{j^k}^{k^{k^k}}$} \\
\hline
$A_1$ & \Eij & $\grad u_i$ & $\grad u_j$ & $Q_i$\vphantom{$a_{j^k}^{k^{k^k}}$} \\
\hline
$A_1$ & \Eik & $\grad u_i$ & $\grad v$ & $Q_i$\vphantom{$a_{j^k}^{k^{k^k}}$} \\
\hline
$A_1$ & \Ejk & $\grad u_j$ & $\grad v$ & $Q_j$\vphantom{$a_{j^k}^{k^{k^k}}$} \\
\hline
$A_1$ & \Ehi & $\grad u_i$ & $\grad u_j$ & $Q_i$\vphantom{$a_{j^k}^{k^{k^k}}$} \\
\hline
\hline
$A_2$ & \Ei & $\grad u_i$ & $\grad v$ & $Q_i$\vphantom{$a_{j^k}^{k^{k^k}}$} \\
\hline
$A_2$ & \Eij & $\grad u_i$ & $\grad v$ & $Q_i$\vphantom{$a_{j^k}^{k^{k^k}}$} \\
\hline
$A_2$ & \Eik & $\grad u_i$ & $\grad v$ & $Q_i$\vphantom{$a_{j^k}^{k^{k^k}}$} \\
\hline
$A_2$ & \Ehi & $\grad u_i$ & $\grad v$ & $Q_i$\vphantom{$a_{j^k}^{k^{k^k}}$} \\
\hline
\end{tabular}
\end{center}
\caption{}
\label{table-sets-of-integration-2}
\end{table}
We multiply the inequalities
\Ei, \Eij{} and \Ehi{} by $x_i$, $x_{ij}$ and $\hat{x}_i$, respectively,
and sum up.
We have $u=u_i$ everywhere in the set $A= A_0\cup A_1 \cup A_2$, and hence to 
show that $u$ is a quasisuperminimizer, we need to keep track of
$\int_A|\grad u_i|^p\,dx$ in the LHSs
and of $\int_A|\grad v|^p\,dx$ in the RHSs.
We also want to choose 
$x_i$, $x_{ij}$ and $\hat{x}_i$
so that 
the integrals of $|\grad u_j|^p$ and 
$|\grad u_k|^p$ in the RHSs are compensated by the same integrals 
in the LHSs.

From Table~\ref{table-sets-of-integration-2}, we see that $\grad u_j$ 
cancels out in $A_0$ and $A_1$ if we have
\begin{equation}\label{cond-u2}
x_j + x_{jk} - Q_i x_{ij} -Q_i \xhi =0,
\end{equation}
and that $\grad u_k$ cancels out in $A_{0}$, if 
\begin{equation}\label{cond-u3}
x_k - Q_i x_{ik} - Q_j x_{jk} = 0.
\end{equation}
In addition, we want the
coefficients in front of the terms containing 
$\grad u_i=\grad u$ in each of the sets $A_0$, $A_1$ and $A_2$ 
to
sum up 
to $1$, i.e.
\begin{equation}\label{cond-u1}
x_i + x_{ij} + x_{ik} + \xhi = 1.
\end{equation}
Considering all permutations of $\{1,2,3\}$ we obtain a linear system
of 12 equations with 12 unknowns. 
However, the system can be simplified, which we do now.
From \eqref{cond-u1} we obtain $\xhi=1-(x_i + x_{ij} + x_{ik})$ and 
inserting this into \eqref{cond-u2} gives
\begin{equation}\label{cond-u2-new}
x_j + x_{jk} + Q_i x_i + Q_i x_{ik} = Q_i.
\end{equation}
From \eqref{cond-u3} we have $Q_i x_{ik} = x_k - Q_j x_{jk}$, which together 
with \eqref{cond-u2-new} leads to 
\begin{align*}
Q_i x_i + x_j + x_k + (1-Q_j)x_{jk} &= Q_i.
\end{align*}
Now, note that this equation is for fixed $i$ symmetric in $j$ and $k$, 
except for the last term in the left-hand side, which thus must be symmetric 
in $j$ and $k$ as well.
Hence, we see that 
\[
(1-Q_j)x_{jk} = (1-Q_k)x_{kj} =: y_i.
\]
Thus 
the above system transforms into the six equations 
\begin{align*}
x_k + S_i y_j + S_j y_i &= 0, \\
Q_i x_i + x_j + x_k + y_i &= Q_i, 
\end{align*}
where $S_i=Q_i/(Q_i-1)$.
It can be written as
\[
\left\{\begin{array}{r}
x  +  Sy  =  0, \\
Rx  +  y  =  c,
\end{array}\right. 
\quad \text{with} \quad
x=\left(\begin{array}{c} 
x_1 \\ x_2 \\ x_3 \end{array}\right)
 \quad \text{and} \quad
y=\left(\begin{array}{c} 
y_1 \\ y_2 \\ y_3 \end{array}\right), \quad
\]
where
\[
S=\left(\begin{array}{ccc} 
0 & S_3 & S_2 \\
S_3 & 0 & S_1 \\
S_2 & S_1 & 0 \end{array}\right), \quad
R=\left(\begin{array}{ccc} 
Q_1 & 1 & 1 \\
1 & Q_2 & 1 \\
1 & 1 & Q_3 \end{array}\right)
 \quad \text{and} \quad
c=\left(\begin{array}{c} 
Q_1 \\ Q_2 \\ Q_3 \end{array}\right).
\]
 From the second equation we have $y=c-Rx$, which transforms the first 
equation into $(SR-I)x=Sc$, whose solution is 
\[
x=(SR-I)^{-1}Sc,
\]
where $I$ stands for the identity matrix.

Now, as we have chosen
 $x$ (and thus $y$), 
so that all extra terms in
the equations \Ei, \Eij, \Ehi, $i,j=1,2,3$, cancel out and the remaining 
ones with $\grad u$ always appear 
with coefficient $1$ in the LHS, 
we need to check how large constants 
appear with $|\nabla v|^{p}$ in the RHS
to determine $\bQ$.
From Table~\ref{table-sets-of-integration-2}, we see that 
$\int_{A_0} |\nabla v|^{p}\,dx$ appears in the right-hand side with a factor
\[
    Q_{A_{0}} = Q_i x_i +Q_j x_j +Q_k x_k.
\]
Similarly, the factors are
\begin{alignat*}{2}
Q_{A_{1}} &= Q_i x_i + Q_j x_j +Q_i x_{ik} +Q_j x_{jk} 
= Q_i x_i + Q_j x_j + x_k \le Q_{A_{0}} &\quad& 
\text{(by \eqref{cond-u3})},\\
Q_{A_{2}} &=Q_i(x_i+x_{ij}+x_{ik}+\xhi) = Q_i &\quad&  
\text{(by \eqref{cond-u1})},
\end{alignat*}
for $\int_{A_1} |\nabla v|^{p}\,dx$ and $\int_{A_2} |\nabla v|^{p}\,dx$,
respectively.
Since the quasiminimizing constant $\bQ$ of $u$ must be at least 
$\max\{Q_1,Q_2,Q_3\}$, we conclude that $Q_{A_{0}}$ is the largest of the
three and 
\begin{equation}  \label{eq-Q-A0}
\bQ = Q_{A_{0}} = c^T x = c^T (SR-I)^{-1}Sc 
= ( ((SR-I)^T)^{-1}c)^T Sc ,
\end{equation}
where ${}^T$ denotes the matrix transpose.
Observe that the value of $Q_{A_0}$ is symmetric in $i$, $j$ and $k$.
An elementary calculation shows that
\[
SR-I=\left(\begin{array}{ccc} 
L_1 & Q_2 L_1 & Q_3 L_1 \\
Q_1 L_2 & L_2 & Q_3 L_2 \\
Q_1 L_3 & Q_2 L_3 & L_3 \end{array}\right), \text{ where } 
L_i = S_j + S_k -1 = \frac{Q_j Q_k -1}{(Q_j-1)(Q_k-1)}, 
\]
for $i\ne j\ne k\ne i$.
Thus, $z:=((SR-I)^T)^{-1}c$ is the unique solution of the system 
$(SR-I)^Tz=c$, which can be equivalently written as
\[
\left(\begin{array}{ccc} 
\frac{L_1}{Q_1} & L_2 & L_3 \\
L_1 & \frac{L_2}{Q_2} & L_3 \\
L_1 & L_2 & \frac{L_3}{Q_3} \end{array}\right)
\left(\begin{array}{c} 
z_1 \\ z_2 \\ z_3 \end{array}\right)
= \left(\begin{array}{c} 
1 \\ 1 \\ 1 \end{array}\right).
\]
Denoting the matrix in the left-hand side by $L$, Cramer's rule gives
\[
z_i = \frac{L_j L_k}{Q_j Q_k \det L} (Q_j-1)(Q_k-1), \quad i\ne j\ne k\ne i,
\] 
where 
\[
\det L = \frac{L_1 L_2 L_3}{Q_1 Q_2 Q_3} (2Q_1 Q_2 Q_3 - Q_1 Q_2 -Q_2 Q_3
- Q_3 Q_1 + 1) =: \frac{L_1 L_2 L_3}{Q_1 Q_2 Q_3} P.
\]
It follows that $z_i = Q_i(Q_j-1)(Q_k-1)/L_i P$.
We also have $Sc=w$, where 
\[
w_i=Q_j S_k+ Q_k S_j = Q_j Q_k \biggl( \frac{1}{Q_j-1} + \frac{1}{Q_k-1} \biggr),
\]
and hence, 
\begin{align*}
w_i z_i &= Q_j Q_k \biggl( \frac{1}{Q_j-1} + \frac{1}{Q_k-1} \biggr)
    \frac{Q_i}{L_i P} (Q_j-1)(Q_k-1) \\
&= \frac{Q_1 Q_2 Q_3}{P} \frac{(Q_j-1)(Q_k-1)(Q_j-1+Q_k-1)}{Q_j Q_k-1}
=: \frac{Q_1 Q_2 Q_3}{P} R_i.
\end{align*}
Consequently, going back
to \eqref{eq-Q-A0}
we obtain
\[
\bQ = Q_{A_{0}} = \sum_{i=1}^3 w_i z_i =  \frac{Q_1 Q_2 Q_3}{P} (R_1 + R_2 + R_3).
\qedhere
\]
\end{proof}

\section{Blowup in pasting lemmas}
\label{sect-pasting}

In this section we shall show that the quasisuperminimizing
constant $Q_1 Q_2$ in the pasting Theorem~\ref{thm-paste-supermin} 
is optimal.
More precisely, we prove the following result.

\begin{thm} \label{thm-Q1Q2-sharp}
Let $p$, $Q_1$ and $Q_2$ be given. 
Then there are $u_1$, $u_2$ and open sets $\Om_1\subset\Om_2$,
such that $u_j$ is a $Q_j$-quasiminimizer in $\Om_j$, $j=1,2$,
and 
\[
     u=\begin{cases}
        u_2, & \text{in } \Om_2 \setm \Om_1, \\ 
        \min\{u_1,u_2\}, & \text{in } \Om_1,
	\end{cases}
\]
is a quasisuperminimizer in $\Om_2$ with the optimal 
quasisuperminimizer constant $Q_1Q_2$.
\end{thm}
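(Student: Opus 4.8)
The plan is to work on $\R$ (the only setting where optimal quasiminimizing constants are computable) and to assemble $u_1,u_2$ from the explicit building blocks already available: the power quasiminimizers $u_Q$ of Example~\ref{ex-power} and the one-corner functions of Example~\ref{ex-1}, using crucially the notion of \emph{maximal $p$-energy} from Definition~\ref{deff-determining}. The constant $Q_1Q_2$ in Theorem~\ref{thm-paste-supermin} arises, exactly as in the Kinnunen--Martio argument behind Theorem~\ref{thm-KiMa}, by composing two losses: on $\Om_1$ one loses only a factor $Q_1$ in passing from $u_2$ to $\min\{u_1,u_2\}$ (since $u_1=u_2$ on $\bdy\Om_1$ forces $\int_{\Om_1}|\grad u_1|^p\le Q_1\int_{\Om_1}|\grad u_2|^p$), and then $u_2$ itself loses a factor $Q_2$ against its linearization on $\Om_2$. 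So I must arrange $u_1,u_2,\Om_1\subset\Om_2$ for which both losses are realized simultaneously.

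The natural attempt is a two-scale construction. At the coarse scale, take $\Om_2$ and a $Q_2$-quasiminimizer $u_2$ on $\Om_2$ which, on a large (but not co-null) subset $\Om_1$, coincides with its own minimizer (e.g.\ is affine there), while its optimal constant $Q_2$ is witnessed by its behaviour across $\Om_2$; piecewise-affine ``zig-zag'' profiles in the spirit of Proposition~\ref{prop-optimal-zig-zag}, or rescaled copies of $u_{Q_2}$, are the candidates. At the fine scale, on each component of $\Om_1$ replace $u_2$ by a rescaled, translated copy of $u_{Q_1}$ (or of an optimal one-corner function) matching $u_2$ on the boundary of that component, lying below $u_2$, and having the maximal $p$-energy allowed by $Q_1$ there; then $u=\min\{u_1,u_2\}=u_1$ on $\Om_1$ and $\int_{\Om_1}|\grad u|^p=Q_1\int_{\Om_1}|\grad u_2|^p$.

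For such $u$ the optimal quasisuperminimizing constant is evaluated by testing against the obstacle solution $H_Iu$ on well-chosen sets $I$ (on which $u$ stays below its linearization, so that $H_Iu$ is the affine comparison with data $u|_{\bdy I}$):
\[
\frac{\int_I|\grad u|^p}{\int_I|\grad(H_Iu)|^p}
=\frac{\int_I|\grad u_2|^p}{\int_I|\grad(H_Iu)|^p}
+(Q_1-1)\,\frac{\int_{I\cap\Om_1}|\grad u_2|^p}{\int_I|\grad(H_Iu)|^p}.
\]
Choosing $I$ to be an extremal set for $u_2$ and tuning the construction so that $\Om_1$ carries (essentially) all of the $p$-energy of $u_2$ on such $I$, the first term equals (or tends to) $Q_2$ and the quotient in the second term equals (or tends to) $Q_2$ as well, giving the ratio $Q_1Q_2$; together with the upper bound from Theorem~\ref{thm-paste-supermin} this pins the optimal constant of $u$ to exactly $Q_1Q_2$. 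If $Q_1=1$ or $Q_2=1$ the claim is trivial: Theorem~\ref{thm-paste-supermin} gives $\max\{Q_1,Q_2\}$, and an example in which the smaller function is dominated by the larger (so that $u$ reduces to one of $u_1,u_2$) shows this cannot be lowered; so assume $Q_1,Q_2>1$.

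The point where the real work lies is the simultaneous saturation, because of a genuine tension: $u_2$ being near-extremal for $Q_2$ forces it to be ``curved/cornered'', whereas the full factor $Q_1$ on $\Om_1$ needs $u_2$ to coincide with its minimizer on $\Om_1$, and $\Om_1$ must have positive-measure complement in $\Om_2$ (otherwise $u=u_1$ a.e.\ and the optimal constant collapses to $Q_1$). Reconciling these is expected to require a carefully tuned, self-similar coarse profile for $u_2$, designed so that along a sequence of rescaled extremal test sets $I$ the part of $u_2$ lying outside $\Om_1$ becomes $p$-energy--negligible while $u_2$ remains a genuine $Q_2$-quasiminimizer on \emph{every} subinterval (here Lemmas~2 and~8 of Martio--Sbordone~\cite{MaSb} and Theorem~4.1 of Martio~\cite{martioLiU09} are the relevant tools), and so that $\min\{u_1,u_2\}$ is well controlled near $\bdy\Om_1$. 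Making this tuning explicit, and checking that the supremum of the ratios above is actually attained and equals $Q_1Q_2$ rather than merely approaching it, is the main obstacle.
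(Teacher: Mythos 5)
There is a genuine gap, and it sits exactly where you put your ``main obstacle''. Your construction is driven by the premise that $\Om_2\setm\Om_1$ must have positive measure, ``otherwise $u=u_1$ a.e.\ and the optimal constant collapses to $Q_1$''. That premise is false, and it steers you away from the construction that actually works. The statement does not require $\Om_1$ to be connected, and the paper takes $\Om_2=(0,1)$, $u_2$ the extremal convex one-corner $Q_2$-quasiminimizer from Lemma~\ref{lem-ex-one-corner-fn} (maximal \p-energy $Q_2$ on $(0,1)$, corner at $x_0$), and $\Om_1=(0,x_0)\cup(x_0,1)$, i.e.\ $\Om_2$ minus the single corner point. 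On each component $u_2$ is affine, so one can put there the extremal one-corner $Q_1$-quasiminimizer $u_1$ with boundary values $u_2$ and maximal \p-energy allowed by $Q_1$; then $u=\min\{u_1,u_2\}=u_1$ a.e., $\int_0^1|u'|^p\,dx=Q_1\int_0^1|u_2'|^p\,dx=Q_1Q_2$, and testing with $\phi=v-u\ge0$, $v(x)=x$, gives the lower bound $Q_1Q_2$, while Theorem~\ref{thm-paste-supermin} gives the matching upper bound. The constant does \emph{not} collapse to $Q_1$ even though $u=u_1$ a.e., because $u_1$ is a $Q_1$-quasiminimizer only in the disconnected set $\Om_1$: test functions in $\Om_2$ may straddle the removed corner point, and there the quasiminimizing property of $u_1$ gives no information. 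This is precisely the ``drawback'' the paper acknowledges ($\Om_1$ is not connected), and it is the whole trick.

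Beyond the wrong premise, the step you flag as the main obstacle is indeed not closable along your route: with $|\Om_2\setm\Om_1|>0$ and $u_2$ affine on $\Om_1$, the corners that create the $Q_2$-extremality of $u_2$ carry a fixed positive share of its \p-energy on any extremal interval $I$ (for the one-corner extremals this share is $A=\al^p x_0>0$ in the notation of Proposition~\ref{prop-paste-trivial-lower-bdd}), so your second quotient cannot reach $Q_2$ and one only gets $Q\ge Q_1Q_2-A(Q_1-1)<Q_1Q_2$ for fixed $p$. This is exactly why the paper, in the connected-$\Om_1$ setting, only proves $Q\ge Q_1Q_2-\eps$ and must let $p\to1\limplus$ (Theorem~\ref{thm-paste-sharp}); exact attainment of $Q_1Q_2$ is obtained only by letting the complement of $\Om_1$ be the (measure-zero) set of corner points, which your positive-measure requirement forbids. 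So the proposal, as written, proves at best an approximate version of the theorem and leaves the exact sharpness unestablished.
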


This is in sharp contrast to Theorem~\ref{thm:min2}, 
where $\min\{u_1,u_2\}$ is guaranteed to have a quasisuperminimizing constant
$\tQ<Q_1 Q_2$, and moreover, 
\[
Q_1 Q_2-\tQ = Q_1 Q_2 \frac{(Q_1-1)(Q_2-1)}{Q_1 Q_2-1}>0
\quad \text{whenever } Q_1, Q_2>1.
\]

A drawback 
of our proof of Theorem~\ref{thm-Q1Q2-sharp} is that $\Om_1$ is not connected. 
However even when $\Om_1$ is required to be connected we can show,
by varying $p$, the optimality of the
blowup constant in Theorem~\ref{thm-paste-supermin} 
using the following result.

\begin{thm} \label{thm-paste-sharp}
Let $Q_1$, $Q_2$ and $\eps>0$ be given. 
Then there are $p$, $u_1$, $u_2$ and an interval $I=(x_0,1)$, $0\le x_0<1$,
such that $u_1$ is a $Q_1$-quasiminimizer in $I$,
$u_2$ is a $Q_2$-quasiminimizer in $\Om=(0,1)$,
and 
\begin{equation} \label{eq-u}
     u=\begin{cases}
        u_2, & \text{in } \Om \setm I, \\ 
        \min\{u_1,u_2\}, & \text{in } I,
	\end{cases}
\end{equation}
is a $Q$-quasisuperminimizer in $\Om$ with optimal 
quasisuperminimizer constant
\begin{equation} \label{eq-Qpaste-2}
      Q \ge Q_1 Q_2-\eps.
\end{equation}
\end{thm}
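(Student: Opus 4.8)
The plan is to build the example out of the explicit power-type quasiminimizers from Example~\ref{ex-power}, pasting in such a way that the minimum operation in the interval $I$ forces the combined function to carry, simultaneously, the maximal \p-energy allowed by $Q_1$ on one piece and the maximal \p-energy allowed by $Q_2$ on the complementary piece. First I would recall that for a given $Q_1>1$ there is an exponent $\al_1>1$ with $Q_1=Q_{\al_1}$ and that $u_{Q_1}(x)=x^{\al_1}$ is a $Q_1$-quasisuperminimizer (indeed subminimizer) on $(0,1)$ having the maximal \p-energy allowed by $Q_1$ on every interval $(0,x_0)$; symmetrically $\ub_{Q_2}(x)=1-(1-x)^{\al_2'}$, for the exponent $\al_2'\in(1-1/p,1)$ with $Q_2=Q_{\al_2'}$, is a $Q_2$-quasisuperminimizer with the maximal \p-energy allowed by $Q_2$ on every interval $(x_0,1)$. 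Here, however, we need $u_2$ to be a $Q_2$-quasi\emph{minimizer} on all of $\Om=(0,1)$, so I would take $u_2=\ub_{Q_2}$ (which is a $Q_2$-quasiminimizer) and $u_1$ to be a suitable $Q_1$-quasiminimizer on a subinterval $I=(x_0,1)$ that agrees with $u_2$ at the endpoint $1$ but lies below $u_2$ on $(x_0,1)$ and carries the maximal $Q_1$-energy there.

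The key point, and the reason the blowup cannot be improved, is the following energy computation. Choose $x_0$ close to $0$; on $(0,x_0)$ set $u=u_2=\ub_{Q_2}$, which has essentially all of the $Q_2$-energy of $\ub_{Q_2}$ since $x_0$ is small (more precisely $\int_0^{x_0}(\ub_{Q_2}')^p\,dx$ is a small but positive fraction, controlled below). On $I=(x_0,1)$ take $u_1$ to be the rescaled power quasiminimizer on $I$ with the same endpoint values $u_1(x_0)=\ub_{Q_2}(x_0)$, $u_1(1)=1$, chosen so that $u_1<u_2$ on the interior of $I$ and $u_1$ has the maximal \p-energy allowed by $Q_1$ on $I$, namely $\int_I(u_1')^p\,dx=Q_1(1-\ub_{Q_2}(x_0))^p/(1-x_0)^{p-1}$. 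Then $u=\min\{u_1,u_2\}=u_1$ on $I$, so
\begin{equation} \label{eq-plan-energy}
\int_0^1 (u')^p\,dx = \int_0^{x_0}(\ub_{Q_2}')^p\,dx + Q_1\frac{(1-\ub_{Q_2}(x_0))^p}{(1-x_0)^{p-1}}.
\end{equation}
Comparing with the linear function on $(0,1)$ through the same endpoints $(0,0)$ and $(1,1)$, whose energy is $1$, the optimal quasisuperminimizing constant $Q$ of $u$ is at least the right-hand side of~\eqref{eq-plan-energy}. Now I would let $x_0\to0$: then $\ub_{Q_2}(x_0)\to0$, $(1-x_0)^{p-1}\to1$, so the second term tends to $Q_1$, while the first term $\int_0^{x_0}(\ub_{Q_2}')^p\,dx$ tends to $0$ — which is the wrong direction. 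So instead I would exploit the freedom in $p$: as $p$ varies, the concentration of the \p-energy of $\ub_{Q_2}$ near the endpoint $1$ (equivalently near $x=0$ after reflection) becomes arbitrarily strong, and one can choose $p$ large (depending on $Q_1,Q_2,\eps$) and then $x_0$ so that $\int_0^{x_0}(\ub_{Q_2}')^p\,dx$ is as close to $Q_2$ as desired while at the same time $(1-\ub_{Q_2}(x_0))^p/(1-x_0)^{p-1}$ is as close to $1$ as desired; feeding this into~\eqref{eq-plan-energy} yields $Q\ge Q_1Q_2-\eps$, which is~\eqref{eq-Qpaste-2}. (That $\ub_{Q_2}$'s \p-energy concentrates near $x=1$ as $p$ grows follows from the explicit form $\al_2'=\al_2'(p)\to1-$ together with $\int_0^{x_0}(\ub_{Q_2}')^p\,dx = Q_2 - Q_2(1-x_0)^{p(\al_2'-1)+1}$ obtained as in Example~\ref{ex-power}, so the complementary part is $Q_2(1-x_0)^{p(\al_2'-1)+1}$, which can be made small by taking $x_0$ bounded away from $0$; one then balances this against the behaviour of $(1-\ub_{Q_2}(x_0))^p$.)

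The remaining verifications are routine: $u_1$ as constructed is, after an affine change of variables, exactly a power quasiminimizer of the type in Example~\ref{ex-power}, hence a genuine $Q_1$-quasiminimizer on $I$ with optimal constant $Q_1$; $u_2=\ub_{Q_2}$ is a $Q_2$-quasiminimizer on $\Om$; and $u\in\Wploc(\Om)$ because both pieces are $W^{1,p}$ near the gluing point $x_0$ and $u$ is continuous there by construction. By Theorem~\ref{thm-paste-supermin}, $u$ is then a $Q_1Q_2$-quasisuperminimizer, so its optimal constant $Q$ satisfies $Q\le Q_1Q_2$, and the energy bound above gives $Q\ge Q_1Q_2-\eps$; choosing $x_0<1$ with $0\le x_0<1$ (strictly positive, as we need $x_0$ bounded away from $0$) gives the interval $I=(x_0,1)$ as required. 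The main obstacle is the balancing act in the last paragraph: one must simultaneously make $\int_0^{x_0}(\ub_{Q_2}')^p\,dx$ close to $Q_2$ and $(1-\ub_{Q_2}(x_0))^p/(1-x_0)^{p-1}$ close to $1$, and these two requirements pull $x_0$ in opposite directions, so the proof hinges on showing that increasing $p$ enlarges the window of admissible $x_0$ enough to satisfy both — this is where the explicit dependence $\al_2'(p)\to1-$ (equivalently the formula~\eqref{eq-def-Q-al} for $Q_\al$) does the real work.
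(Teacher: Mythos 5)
There is a genuine gap, and it is fatal to the chosen construction rather than a repairable detail. You take $u_2=\ub_{Q_2}$, which by Example~\ref{ex-power} has the maximal \p-energy allowed by $Q_2$ on \emph{every} interval $(x_0,1)$, and this is exactly what destroys the blowup. Write $\al'\in(1-1/p,1)$ for the exponent with $Q_2=Q_{\al'}$ and set $t=(1-x_0)^{p(\al'-1)+1}$. Then $\int_0^{x_0}(\ub_{Q_2}')^p\,dx=Q_2(1-t)$, and since $1-\ub_{Q_2}(x_0)=(1-x_0)^{\al'}$ one also gets $(1-\ub_{Q_2}(x_0))^p/(1-x_0)^{p-1}=t$. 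Hence the right-hand side of your displayed energy identity equals $Q_2(1-t)+Q_1t$, a convex combination of $Q_1$ and $Q_2$: for every $p$ and every $x_0$ it never exceeds $\max\{Q_1,Q_2\}$, so the comparison with the linear function cannot show any blowup at all, let alone $Q_1Q_2-\eps$. The ``balancing act'' you identify as the main obstacle is therefore not delicate but impossible: the two quantities you want to make simultaneously large are precisely the complementary pieces $Q_2(1-t)$ and $t$ of one and the same parameter, so enlarging one shrinks the other, uniformly in $p$. (A secondary problem: the rescaled convex power function $u_1$ with matching endpoint values does not lie below $\ub_{Q_2}$ near $x=1$, because $\ub_{Q_2}'\to\infty$ there, so $u\ne u_1$ on all of $I$; but even granting $u=u_1$ on $I$, the bound above caps at $\max\{Q_1,Q_2\}$.)

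The structural point, and the way the paper proceeds, is that to gain a full factor $Q_1$ on $I$ one needs $u_2$ restricted to $I$ to be (close to) the \emph{minimizer} there, i.e.\ to carry as little excess energy on $I$ as possible, while its $Q_2$-excess is pushed onto $\Om\setm I$ -- the opposite of your choice. The paper takes $u_2$ to be the optimal convex one-corner function of Lemma~\ref{lem-ex-one-corner-fn}, which is linear on $I=(x_0,1)$; then a $u_1$ with the maximal \p-energy allowed by $Q_1$ on $I$ satisfies $\int_I|u_1'|^p\,dx=Q_1\int_I|u_2'|^p\,dx=Q_1(Q_2-A)$, where $A=\al^p x_0$ is the energy of $u_2$ on $(0,x_0)$, so the total energy is $Q_1Q_2-A(Q_1-1)$. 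The free parameter is then $p$, but taken in the direction $p\to1\limplus$ (not $p\to\infty$): by Proposition~\ref{prop-ga>Q},
\[
\al=\frac{p-1}{p}\,\frac{1-\ga^{-p}}{1-\ga^{1-p}}<\frac{p-1}{p}\,\frac{1}{1-1/Q_2}\to0,
\]
so $A\le\al^p\to0$ and the lower bound tends to $Q_1Q_2$, giving \eqref{eq-Qpaste-2}; the upper bound $Q\le Q_1Q_2$ from Theorem~\ref{thm-paste-supermin} is the same as in your write-up. If you wish to salvage a power-function variant, you would have to replace $\ub_{Q_2}$ by a $Q_2$-quasiminimizer that is nearly \p-harmonic on $I$ and concentrates its excess energy in $(0,x_0)$, which is exactly what the one-corner construction accomplishes.
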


\begin{remark} \label{rmk-qsh}
The functions $u_1$, $u_2$ and $u$ 
in the proofs below of Theorems~\ref{thm-Q1Q2-sharp} and~\ref{thm-paste-sharp}
are continuous,  and hence this also demonstrates the sharpness
of the blowup in the pasting lemma for quasisuperharmonic functions 
(Theorem~5.1 in A.~Bj\"orn--Martio~\cite{BMartio}).
\end{remark}

To prove these theorems we need to use some results
on one-corner functions.
In particular, we will use the following result which was obtained by
Uppman~\cite[Section~2.2.3]{uppman}. 
For $p=2$ it is due to Judin~\cite[Example~4.0.25]{judin}.

\begin{thm} \label{thm-ga-p}
Let $0<\alp<\be<\infty$ and $\ga=\be/\al$.
The optimal quasiminimizer constant for
\begin{equation}   \label{eq-one-corner-uppman}
    u(x)=\begin{cases}
        \alp x, & x \le 0, \\
        \be x, & x \ge 0, \\
	\end{cases}
\end{equation}
is
\[
    Q=\frac{(\ga^p+k)(1+k)^{p-1}}{(\ga+k)^p},
\]
where
\[
    k = \frac{p\ga^p(\ga-1)-\ga(\ga^p-1)}{\ga^p-1-p(\ga-1)},
\]
Moreover 
$u$ has the maximal \p-energy allowed by $Q$
on an interval
of the form $[-a,b]$, $a,b >0$,
if and only if $a/b=k$.
\end{thm}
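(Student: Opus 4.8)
The plan is to reduce the statement to a one-variable maximization problem. First I would observe that, in one dimension, testing the quasiminimizer inequality \eqref{eq-qm-Rn} against $\phi\in W^{1,p}_0$ splits over the (at most countably many) components $(a_j,b_j)$ of $\{\phi\ne0\}$, on whose endpoints $u+\phi$ agrees with $u$. On each such interval Jensen's inequality shows that the affine function $v_j$ interpolating $u$ between $a_j$ and $b_j$ is the unique minimizer of the \p-energy among functions with those boundary values, so
\[
\int_{(a_j,b_j)}|u'|^p\,dx\le Q\int_{(a_j,b_j)}|v_j'|^p\,dx\le Q\int_{(a_j,b_j)}|(u+\phi)'|^p\,dx
\]
whenever $R(I):=\int_I|u'|^p\,dx\big/\int_I|v_I'|^p\,dx\le Q$ for the bounded interval $I=[a_j,b_j]$, with $v_I$ the affine interpolant of $u$ on $I$. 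Summing over $j$, it follows that $u$ is a $Q$-quasiminimizer as soon as $R(I)\le Q$ for every bounded $I$; conversely, taking $\phi$ supported on a single $I$ with $u+\phi=v_I$ there shows that the optimal constant equals $\sup_I R(I)$.

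Next I would compute this supremum. Since $u'\equiv\al$ on $(-\infty,0)$ and $u'\equiv\be$ on $(0,\infty)$, any interval avoiding the corner has $R(I)=1$, so only intervals $[-a,b]$ with $a,b>0$ matter, and for these
\[
R([-a,b])=\frac{(\al^pa+\be^pb)(a+b)^{p-1}}{(\al a+\be b)^p}.
\]
This is $0$-homogeneous in $(a,b)$; setting $t=a/b$ and using $\be=\ga\al$ turns it into $f(t)=(t+\ga^p)(t+1)^{p-1}/(t+\ga)^p$ for $t>0$. One checks that $f(0^+)=1$ and $f(t)\to1$ as $t\to\infty$, while $f(t)>1$ for every $t\in(0,\infty)$ by the strict Jensen inequality (the slope of $u$ is non-constant on $(-a,b)$). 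Hence $f$ attains its maximum at an interior critical point, and the optimal constant is $Q=\max_{t>0}f(t)$.

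To locate that critical point I would differentiate logarithmically:
\[
\frac{f'(t)}{f(t)}=\frac1{t+\ga^p}+\frac{p-1}{t+1}-\frac{p}{t+\ga}.
\]
Clearing denominators yields, a priori, a quadratic in $t$; the one genuinely delicate point is that the $t^2$-coefficients cancel (they sum to $1+(p-1)-p=0$), leaving the linear equation
\[
\bigl(\ga^p-1-p(\ga-1)\bigr)\,t=p\ga^p(\ga-1)-\ga(\ga^p-1),
\]
whose unique root is exactly $t=k$. A brief monotonicity argument — each of $\ga\mapsto\ga^p-1-p(\ga-1)$ and $\ga\mapsto(p-1)\ga^p-p\ga^{p-1}+1$ vanishes at $\ga=1$ and is increasing for $\ga>1$ — shows $k>0$, so $k$ is the unique critical point and hence the global maximizer of $f$. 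Substituting $t=k$ gives $Q=f(k)=(\ga^p+k)(1+k)^{p-1}/(\ga+k)^p$, the claimed value.

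Finally, the ``moreover'' part is immediate from Definition~\ref{deff-determining}: $u$ has the maximal \p-energy allowed by $Q$ on $[-a,b]$ precisely when $R([-a,b])=Q$, i.e.\ $f(a/b)=\max_{t>0}f$, which by uniqueness of the maximizer happens if and only if $a/b=k$. The main obstacle is thus not conceptual but computational: carrying out the cancellation that collapses the critical-point equation to a linear one and then matching the resulting rational expressions with the stated $k$ and $Q$, together with phrasing the interval-reduction step carefully enough that it delivers both the bound $Q$ and its optimality.
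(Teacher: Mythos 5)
Your proof is correct. Note first that the paper contains no proof of Theorem~\ref{thm-ga-p} to compare with: the result is quoted from Uppman's and (for $p=2$) Judin's theses, so your argument should be judged on its own, and it stands up. The reduction is sound: in one dimension $\phi$ has a continuous representative, so $\{\phi\ne 0\}$ is open and splits into intervals on whose finite endpoints $u+\phi=u$, Jensen's inequality identifies the affine interpolant as the unique energy minimizer, and testing with $\phi=v_I-u$ on a single interval shows the optimal constant is exactly $\sup_I R(I)$; intervals missing the corner give $R=1$, so everything reduces to maximizing $f(t)=(t+\ga^p)(t+1)^{p-1}/(t+\ga)^p$ over $t=a/b>0$. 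I checked the computation: clearing denominators in $f'/f=1/(t+\ga^p)+(p-1)/(t+1)-p/(t+\ga)=0$ does cancel the quadratic terms and leaves $\bigl(\ga^p-1-p(\ga-1)\bigr)t=p\ga^p(\ga-1)-\ga(\ga^p-1)$, whose root is the stated $k$, and your monotonicity argument gives $k>0$; since $f>1$ on $(0,\infty)$ with boundary limits $1$, any point where the maximum is attained is an interior critical point, so uniqueness of the critical point yields both $Q=f(k)$ and the ``if and only if'' in the last assertion, in accordance with Definition~\ref{deff-determining}. Two minor points worth making explicit: if $u$ is regarded as a quasiminimizer on an unbounded domain such as $\R$, a component of $\{\phi\ne0\}$ may be unbounded, but there both sides of \eqref{eq-qm-Rn} are infinite (since $|u'|\ge\al>0$ and $\phi'\in L^p$), so such components are harmless; and the one-line justification that the maximizer of $f$ is unique (any maximum point is critical, and there is only one critical point) deserves to be spelled out, as it is exactly what the ``moreover'' part rests on. As a sanity check, simplifying $f(k)$ gives the alternative formula \eqref{eq-Q-without-k}, consistent with the rest of the paper.
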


The last part is a consequence of the proof by Uppman 
(or Judin in the case when $p=2$).
Recall from Definition~\ref{deff-determining} that a quasiminimizer
is said to have the maximal \p-energy allowed by $Q$
on an interval $I$ if its \p-energy therein is $Q$-times 
the \p-energy of the linear function with the same boundary values on 
$\bdry I$.
Note also that $k=\ga$ if $p=2$.

We will say that $u$ as in~\eqref{eq-one-corner-uppman} is a \emph{one-corner function} with
\emph{corner} $0$ and \emph{quotient} $\ga$. 
We will mainly be interested in convex one-corner functions as these
are subminimizers and thus $Q$ above is also the optimal 
quasisuperminimizer constant.

\begin{prop}   \label{prop-Q-increasing}
The function $Q(\ga,p)$ is continuous, and moreover
it is strictly increasing with respect to $\ga$.
\end{prop}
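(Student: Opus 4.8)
The plan is to reduce the proposition to an explicit one-parameter maximization. Fix $p>1$ and $\ga>1$ and let $u$ be the one-corner function in~\eqref{eq-one-corner-uppman} with $\al=1$, $\be=\ga$. Since $Q=Q(\ga,p)$ is the quasiminimizing constant of $u$, testing the defining inequality on each interval $(-t,1)$, $t>0$, against the piecewise linear competitor $\phi=v-u$ (where $v$ is linear on $(-t,1)$ with $v=u$ at the endpoints; this $\phi$ is admissible, and $\phi>0$ a.e.\ there because $\ga>1$, so $\{\phi\ne0\}=(-t,1)$ up to a null set), and computing the two energies --- reducing a general interval $(-a,b)$, $a,b>0$, to the case $t=a/b$ by positive homogeneity of $u$ --- one gets
\[
F(t,\ga,p):=\frac{(t+\ga^p)(t+1)^{p-1}}{(\ga+t)^p}\le Q(\ga,p)\qquad\text{for all }t\ge0 .
\]
By the last clause of Theorem~\ref{thm-ga-p}, $u$ has the maximal \p-energy allowed by $Q$ on $[-k,1]$, which says precisely that $F(k,\ga,p)=Q(\ga,p)$, where $k=k(\ga,p)>0$. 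Hence $Q(\ga,p)=\max_{t\ge0}F(t,\ga,p)$, the maximum being attained at the interior point $t=k(\ga,p)>0$. This representation is the heart of the matter; everything else is extracted from it.

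For strict monotonicity in $\ga$ (with $p$ fixed), I would first note that for each fixed $t>0$,
\[
\frac{\partial}{\partial\ga}\log F(t,\ga,p)=\frac{p\ga^{p-1}}{t+\ga^p}-\frac{p}{t+\ga}=\frac{p\,t(\ga^{p-1}-1)}{(t+\ga^p)(t+\ga)}>0\qquad(\ga>1),
\]
so $\ga\mapsto F(t,\ga,p)$ is strictly increasing on $(1,\infty)$. Given $1\le\ga_1<\ga_2$ with $\ga_1>1$, evaluating at the maximizer $t_1=k(\ga_1,p)>0$ of $F(\cdot,\ga_1,p)$ gives $Q(\ga_1,p)=F(t_1,\ga_1,p)<F(t_1,\ga_2,p)\le\max_{t\ge0}F(t,\ga_2,p)=Q(\ga_2,p)$. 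For the remaining case $\ga_1=1$ one has $Q(1,p)=1$ ($u$ is then linear), whereas $Q(\ga_2,p)\ge F(1,\ga_2,p)=(1+\ga_2^p)2^{p-1}/(1+\ga_2)^p>1$ by strict convexity of $s\mapsto s^p$. (Equivalently, the envelope theorem applied to $Q=\max_t F$ yields $\partial_\ga Q=\partial_\ga F|_{t=k}>0$ directly, once one knows $k$ is differentiable.)

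For continuity I would argue as follows. On $(1,\infty)\times(1,\infty)$ the denominator $\ga^p-1-p(\ga-1)$ of the formula for $k$ in Theorem~\ref{thm-ga-p} is strictly positive (it vanishes at $\ga=1$ and has positive derivative $p(\ga^{p-1}-1)$ for $\ga>1$), so $k(\ga,p)$ is continuous and positive there; since also $(\ga+k)^p>0$, the identity $Q(\ga,p)=F(k(\ga,p),\ga,p)$ exhibits $Q$ as a continuous function of $(\ga,p)$. Continuity up to $\ga=1$ then follows by squeezing: $u$ has slopes between $1$ and $\ga$, so by the Martio--Sbordone bound~\cite{MaSb} we have $1\le Q(\ga,p)\le\ga^{p-1}$, which tends to $1=Q(1,p)$ as $\ga\to1\limplus$. (Alternatively one can avoid the explicit $k$ entirely: $F$ is jointly continuous, $F(0,\ga,p)=1$, and $F(t,\ga,p)\to1$ as $t\to\infty$ uniformly on compact subsets of the parameter domain, so near any fixed parameters the maximum in $Q=\max_{t\ge0}F$ is attained over a fixed compact $t$-interval, and the maximum of a jointly continuous function over a fixed compact set depends continuously on the parameters.) The only step that requires care is the first one --- identifying $Q(\ga,p)$ with $\max_t F$ and locating a strictly positive maximizer --- but both are handed to us by Theorem~\ref{thm-ga-p}; once that representation is available, the two assertions are routine.
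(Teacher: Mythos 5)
Your proposal is correct, and for the strict monotonicity it takes a genuinely different route from the paper's, although both rest on the same input from Theorem~\ref{thm-ga-p}: the quasiminimizing property gives the energy-ratio bound $F(t,\ga,p)\le Q(\ga,p)$ for every $t\ge 0$, and the last clause gives equality at $t=k(\ga,p)>0$. You turn this into the closed-form representation $Q(\ga,p)=\max_{t\ge0}F(t,\ga,p)$ (note that $F$ is just the expression of Theorem~\ref{thm-ga-p} with $k$ replaced by $t$) and then get monotonicity by differentiating $F$ in $\ga$ at the fixed maximizer $t_1=k(\ga_1,p)$; the strictness indeed needs $t_1>0$, which you correctly secure. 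The paper instead keeps the boundary data on the extremal interval $I=[-a,1]$ fixed and constructs a one-corner comparison function $w$ with quotient $\ga'>\ga$, equal to $u$ on $[-a,0]\cup\{1\}$ and lying below $u$, whose \p-energy on $I$ strictly exceeds that of $u$ (in effect a Jensen/convexity comparison of the slopes, with the corner moved rather than the right-hand slope changed at a fixed corner as in your variation of $F$); since $u$ has the maximal \p-energy allowed by $Q$ on $I$ and the ratio for $w$ on $I$ is at most $Q(\ga',p)$, it follows that $Q(\ga',p)>Q(\ga,p)$. So the paper's mechanism is a geometric competitor construction avoiding any calculus with the formulas, while yours is a purely computational maximization that in addition makes the variational characterization $Q=\max_t F$ explicit. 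For continuity your argument is essentially the paper's one-line justification spelled out (positivity of $\ga^p-1-p(\ga-1)$ for $\ga>1$, hence continuity of $k$ and of $Q=F(k,\ga,p)$); the extension to $\ga\to1\limplus$ via $1\le Q\le\ga^{p-1}$ is not required, since Theorem~\ref{thm-ga-p} concerns only $\ga>1$, but it is correct and harmless.
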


\begin{proof}
The continuity follows directly 
from the expressions in Theorem~\ref{thm-ga-p}.

Let $\ga'>\ga$ and let $I=[-a,1]$ be 
an interval such that 
$u$ has the maximal \p-energy allowed by $Q$
on $I$, where $u$ and $Q$ are given by Theorem~\ref{thm-ga-p}
with $\ga=\be/\alp$.
Let $\be'=\ga' \alp > \be$. 
Choose $0<x_0<1$ so that $\alp x_0 + \be' (1-x_0) = \be$
and let
\[
    w(x)=\begin{cases}
        \alp x, & x \le x_0, \\
        \beta' (1-x_0)+\beta, & x \ge x_0. \\
	\end{cases}
\]
Then $w$ is a $Q'=Q(\ga',p)$-quasiminimizer in $I$, $w=u$ 
on $[-a,0]\cup\{1\}$ and $w<u$ in $(0,1)$.
Hence, if $v$ is the linear function
in $I$ with boundary values $v=w$ on $\bdy I$, then
\[
      \int_I |w'|^p \, dx >      \int_I |u'|^p \, dx = Q \int_I |v'|^p \, dx.
\]
This shows that
$Q'>Q$.
\end{proof}

A direct consequence of Proposition~\ref{prop-Q-increasing} is that 
we can  view $\ga$ as a function of $Q$ and $p$, and this
function is strictly increasing with respect to $Q$.
We will also need the following estimate.

\begin{prop} \label{prop-ga>Q}
It is always true that
$Q \le \ga^{p-1}$.
\end{prop}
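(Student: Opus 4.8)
The plan is to show that $Q\le\ga^{p-1}$ directly from a comparison argument, without manipulating the closed-form expression in Theorem~\ref{thm-ga-p}. Recall that $Q$ is the \emph{optimal} quasiminimizer constant for the one-corner function $u$ with slopes $\alp<\be$ and quotient $\ga=\be/\alp$. To bound $Q$ from above it suffices to exhibit \emph{some} constant $Q'$ for which the quasiminimizing inequality \eqref{eq-qm-Rn} holds on $\R$ (equivalently on every bounded interval), and then $Q\le Q'$. I would take $Q'=\ga^{p-1}$.

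First I would recall the elementary fact, essentially Lemmas~2 and~8 of Martio--Sbordone~\cite{MaSb} mentioned in the introduction, that any strictly increasing piecewise linear function $u$ on an interval with $\inf|u'|=\alp$ and $\sup|u'|=\be$ is a $(\be/\alp)^{p-1}$-quasiminimizer. The reason is the following: fix a test function $\phi\in\Wp_0$, let $v=u+\phi$, and set $E=\{\phi\ne0\}$. Since $u$ is piecewise linear and monotone,
\[
\int_E|u'|^p\,dx \le \be^{p-1}\int_E|u'|\,dx = \be^{p-1}\int_E u'\,dx,
\]
where the last equality uses $u'\ge0$. Now $\int_E u'\,dx=\int_E v'\,dx$ because $\phi$ has compact support in $E$ (so the boundary terms cancel, componentwise on each interval of $E$), and hence
\[
\int_E u'\,dx = \int_E v'\,dx \le \int_E|v'|\,dx \le \Bigl(\int_E|v'|^p\,dx\Bigr)^{1/p}|E|^{1-1/p}.
\]
On the other hand $\int_E|u'|\,dx\ge\alp|E|$, so $|E|\le\alp^{-1}\int_E|u'|\,dx$, giving $|E|^{1-1/p}\le\alp^{-(1-1/p)}\bigl(\int_E|u'|\,dx\bigr)^{1-1/p}$. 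Combining the displays, writing $J=\int_E|u'|\,dx$ and $K=\int_E|v'|^p\,dx$, yields $\int_E|u'|^p\,dx\le\be^{p-1}J$ and $J\le K^{1/p}(\alp^{-1}J)^{1-1/p}$, i.e.\ $J^{1/p}\le\alp^{-(1-1/p)}K^{1/p}$, hence $J\le\alp^{-(p-1)}K$. Therefore $\int_E|u'|^p\,dx\le(\be/\alp)^{p-1}\int_E|v'|^p\,dx=\ga^{p-1}\int_E|v'|^p\,dx$, which is exactly \eqref{eq-qm-Rn} with constant $\ga^{p-1}$. Since this holds for every test $\phi$, the one-corner function $u$ is a $\ga^{p-1}$-quasiminimizer, and by optimality of $Q$ we conclude $Q\le\ga^{p-1}$.

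The main obstacle is getting the chain of H\"older/comparison estimates in the right order so that the exponents $p$ and $p-1$ balance correctly; the one genuine subtlety is the identity $\int_E u'\,dx=\int_E v'\,dx$, which requires that $E=\{\phi\ne0\}$ be handled componentwise and that $\phi$ vanish at the endpoints of each component interval — this is where $\phi\in\Wp_0(\Om)$ is used. Everything else is routine. (A purely computational alternative would be to verify $(\ga^p+k)(1+k)^{p-1}\le\ga^{p-1}(\ga+k)^p$ from the formulas in Theorem~\ref{thm-ga-p}, but the comparison argument above is cleaner and is the approach I would present.)
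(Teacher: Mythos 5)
Your proof is correct, but it takes a genuinely different route from the paper. The paper's argument is a two-line algebraic estimate from the closed formula in Theorem~\ref{thm-ga-p}: writing $c=k/\ga$, one first discards the factor $(1+k)^{p-1}/(\ga+k)^{p-1}\le 1$ and then uses $\ga\le\ga^p$ to get $Q\le(\ga^p+c\ga^p)/(1+c)\ga=\ga^{p-1}$. You instead bypass the formula entirely and prove directly, via the monotonicity of $u$, the identity $\int_E u'\,dx=\int_E v'\,dx$ on the components of $E=\{\phi\ne0\}$, and H\"older's inequality, that any increasing function with $\alp\le u'\le\be$ satisfies \eqref{eq-qm-Rn} with constant $(\be/\alp)^{p-1}$; optimality of $Q$ then gives the claim. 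Your chain of estimates is in order (the exponents balance: $J\le K^{1/p}(\alp^{-1}J)^{1-1/p}$ does yield $J\le\alp^{-(p-1)}K$), and you correctly identify the one real subtlety, namely that $\phi\in\Wp_0$ must vanish at the endpoints of each component interval of $E$. What your approach buys is generality and independence of Theorem~\ref{thm-ga-p}: it is in effect a self-contained proof of the Martio--Sbordone bound $(\sup|u'|/\inf|u'|)^{p-1}$ quoted in the paper, valid for arbitrary monotone piecewise linear (indeed Lipschitz) functions, whereas the paper's computation is shorter but only works once the explicit expressions for $Q$ and $k$ are available. Two small points you should add for completeness: the division by $J^{1-1/p}$ requires $0<J<\infty$, so note that $J=0$ makes \eqref{eq-qm-Rn} trivial, and $J=\infty$ (possible only when $|E|=\infty$, e.g.\ on $\Om=\R$) forces both sides of \eqref{eq-qm-Rn} to be infinite by the triangle inequality in $L^p$, as in the opening reduction of the paper's proof of Theorem~\ref{thm:min2}; also the phrase ``$\phi$ has compact support in $E$'' is not literally what is used --- what matters is exactly the componentwise vanishing you state afterwards.
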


\begin{proof}
Let $c=k/\ga$.
Then
\[
    Q=\frac{(\ga^p+k)(1+k)^{p-1}}{(\ga+k)^p}
    \le \frac{\ga^p+k}{\ga+k}
    = \frac{\ga^p+c\ga}{(1+c)\ga} 
    \le \frac{\ga^{p}+c\ga^p}{(1+c)\ga}  
    = \ga^{p-1}.
\qedhere
\]
\end{proof}

\begin{remark}
A direct calculation of $\ga^p+k$, $1+k$ and $\ga+k$ yields after
simplifications that
\begin{equation}   \label{eq-Q-without-k}
Q= \frac{(p-1)^{p-1}(\ga^p-1)^p}{p^p(\ga^p-\ga)^{p-1}(\ga-1)}
= \frac{(p-1)^{p-1}}{p^p} \biggl( \frac{\ga^p-1}{\ga^p-\ga} \biggr)^{p-1}
\frac{\ga^p-1}{\ga-1}.
\end{equation}
It is easily verified 
that $\ga^p-1 \ge (\ga-1)\ga^{p-1}$, and 
inserting this into~\eqref{eq-Q-without-k}
gives, together with Proposition~\ref{prop-ga>Q}, the two-sided estimate
\[
Q \le \ga^{p-1} \le \frac{p^{p}Q}{(p-1)^{p-1}}.
\]
\end{remark}

\begin{lem}  \label{lem-ex-one-corner-fn}
Given $\ga>1$, let $Q$ be as in Theorem~\ref{thm-ga-p}.
Then the function
\begin{equation}  \label{eq-def-one-corner}
     u(x)= \begin{cases}
	\alp x, & 0<x \le x_0, \\
	1+\alp\ga (x-1), & x_0 \le x < 1,
	\end{cases}
\end{equation}
with 
\begin{equation}  \label{eq-x0-al}
x_0 =\frac{p\ga^p(\ga-1)-\ga(\ga^p-1)}
	{(p-1)(\ga^p-1)(\ga-1)}
\quad \text{and} \quad
\al = \frac{p-1}{p} \frac{\ga^{p}-1}{\ga^{p}-\ga},
\end{equation}
is the unique one-corner function with the boundary conditions 
$u(0)=0$ and $u(1)=1$ that is convex and
has the maximal \p-energy allowed by $Q$ on $(0,1)$.
\end{lem}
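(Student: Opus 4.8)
The statement combines two facts: existence/uniqueness of the one-corner function with the prescribed boundary data and convexity, and the explicit formulas for the corner point $x_0$ and the slope $\al$. The plan is to start from the general one-corner function of Theorem~\ref{thm-ga-p} with quotient $\ga$ and slopes $\alp,\be=\ga\alp$, and rescale so that the corner interval becomes $[0,1]$ with $u(0)=0$ and $u(1)=1$, then read off $x_0$ and $\al$ from the "maximal \p-energy" characterization $a/b=k$ in Theorem~\ref{thm-ga-p}.

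First I would set up the one-corner function on an interval $[-a,b]$: by Theorem~\ref{thm-ga-p}, $u$ has the maximal \p-energy allowed by $Q$ on $[-a,b]$ if and only if $a/b=k$. Normalizing the corner to $0$ and the left slope to $\al$ (so the right slope is $\ga\al$), the value at the right endpoint is $\ga\al b$ and at the left endpoint is $-\al a$. I want the affine change of the $x$-variable that sends $[-a,b]$ to $[0,1]$ with $u=0$ at the left end and $u=1$ at the right end. Under this reparametrization the corner lands at $x_0 = a/(a+b) = k/(k+1)$, since $a/b=k$. Substituting the formula $k=(p\ga^p(\ga-1)-\ga(\ga^p-1))/(\ga^p-1-p(\ga-1))$ into $k/(k+1)$ and simplifying the denominator (the two terms of $k+1$ share the denominator $\ga^p-1-p(\ga-1)$, and the numerator becomes $p\ga^p(\ga-1)-\ga(\ga^p-1)+\ga^p-1-p(\ga-1) = (p-1)(\ga^p-1)(\ga-1)$ after factoring) yields exactly the claimed
\[
x_0 = \frac{p\ga^p(\ga-1)-\ga(\ga^p-1)}{(p-1)(\ga^p-1)(\ga-1)}.
\]
For the slope, the requirement $u(0)=0$, $u(1)=1$ together with $u$ being affine with slope $\al$ on $[0,x_0]$ and slope $\al\ga$ on $[x_0,1]$ forces $\al x_0 + \al\ga(1-x_0)=1$, i.e.\ $\al = 1/(x_0 + \ga(1-x_0)) = 1/(\ga - (\ga-1)x_0)$. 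Plugging in the expression for $x_0$ just obtained and simplifying gives $\al = \tfrac{p-1}{p}\,\tfrac{\ga^p-1}{\ga^p-\ga}$, matching \eqref{eq-x0-al}; equivalently one can arrive at this via the relation between $\al$, $Q$ and $\ga$ coming from $x_0 = \al^{1/(\ga-1)}$-type identities, but the direct substitution is cleanest.

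For the "maximal \p-energy on $(0,1)$" claim and uniqueness, I would argue as follows. Affine reparametrizations of $\R$ multiply the \p-energy of a function and of its affine interpolant by the same factor, so the ratio $\int_I|u'|^p / \int_I|v'|^p$ is invariant; hence $u$ in \eqref{eq-def-one-corner} has the maximal \p-energy allowed by $Q$ on $(0,1)$ precisely because the original function on $[-a,b]$ with $a/b=k$ does, by Theorem~\ref{thm-ga-p}. It is convex since $\ga>1$ means the slope increases across the corner. For uniqueness: a convex one-corner function $w$ with $w(0)=0$, $w(1)=1$ is determined by its corner position $t\in(0,1)$ and left slope $\sigma>0$, subject to $\sigma t + \sigma\rho(1-t)=1$ where $\rho>1$ is the ratio of slopes; its best quasiminimizer constant on $(0,1)$ is $Q(\rho,p)$ by Theorem~\ref{thm-ga-p} (energy ratio is reparametrization-invariant), and it attains this with maximal \p-energy on the whole of $(0,1)$ iff the corner splits $(0,1)$ in ratio $k(\rho)$, i.e.\ $t=k(\rho)/(k(\rho)+1)$; demanding $Q(\rho,p)=Q$ forces $\rho=\ga$ by the strict monotonicity in Proposition~\ref{prop-Q-increasing}, and then $t=x_0$ and $\sigma=\al$ are determined. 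The main obstacle is purely the algebraic simplification of $k/(k+1)$ and of $1/(\ga-(\ga-1)x_0)$ into the stated closed forms; everything conceptual is immediate from Theorem~\ref{thm-ga-p} and Proposition~\ref{prop-Q-increasing}, so I would present those simplifications compactly and otherwise lean on the cited results.
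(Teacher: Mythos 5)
Your proof is correct and follows essentially the same route as the paper's: continuity/boundary conditions give $\al=1/(\ga+x_0(1-\ga))$, the last part of Theorem~\ref{thm-ga-p} (maximal \p-energy iff $a/b=k$) gives $x_0=k/(k+1)$, and the stated formulas follow by the same algebraic simplification. The only difference is that you spell out the uniqueness argument (via Proposition~\ref{prop-Q-increasing}) and the affine-invariance of the energy ratio, which the paper compresses into ``uniqueness follows by construction''; both of these added details are sound.
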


\begin{proof}
For $u$ to be continuous, it is required that 
$\alp x_0 + \alp \ga (1-x_0)=1$, i.e.\ that
\begin{equation} \label{eq-alp-x0}
	\alp= \frac{1}{\ga+x_0(1-\ga)}.
\end{equation}
Theorem~\ref{thm-ga-p} with $a=x_0$ and $b=1-x_0$ gives $x_0/(1-x_0)=k$,
i.e.\ $x_0=k/(k+1)$.
The formula for $k$ from Theorem~\ref{thm-ga-p} then yields after some 
simplification the formula for $x_0$ in~\eqref{eq-x0-al}.
Inserting that into~\eqref{eq-alp-x0} then concludes the proof of the lemma,
since uniqueness follows by construction.
\end{proof}

\begin{remark}
Lemma~\ref{lem-ex-one-corner-fn} can also be proved without an appeal to
Theorem~\ref{thm-ga-p} by maximizing the \p-energy 
\[
	E=\int_0^1 |u_2'|^p \, dx
	   = \alp^p x_0 + \alp^p \ga^p (1-x_0)
         = \frac{x_0+\ga^p(1-x_0)}{(\ga+x_0(1-\ga))^p}
\]
with respect to $x_0$.
\end{remark}

\begin{remark} \label{rmk-Qag}
A straightforward calculation shows that for $Q$ and $\al$ 
from~\eqref{eq-Q-without-k}
and~\eqref{eq-alp-x0} it holds that $Q=Q_\al=Q_{\al\ga}$,
where $Q_\al$ and $Q_{\al\ga}$ are related to $\al$ and $\al\ga$
as in~\eqref{eq-def-Q-al}.
Thus, the optimal one-corner function provided by 
Lemma~\ref{lem-ex-one-corner-fn} is tangent at the end points $1$ and $0$
to the power-like functions $u_Q$ and $\ub_Q$
from~\eqref{eq-def-uQ-ubQ}, respectively.
\end{remark}

The proof below of Theorem~\ref{thm-paste-sharp} is based on varying $p$ 
and the fact that the constant in Theorem~\ref{thm-paste-supermin} 
is independent of $p$.
For fixed $p$ we obtain the following somewhat weaker result.

\begin{prop} \label{prop-paste-trivial-lower-bdd}
Let $p$, $Q_1$ and $Q_2$ be given. 
Then there are $u_1$, $u_2$ and an interval $I=(x_0,1)$, $x_0\ge 0$,
such that $u_1$ is a $Q_1$-quasiminimizer in $I$,
$u_2$ is a $Q_2$-quasiminimizer in $\Om=(0,1)$,
and 
\[
     u=\begin{cases}
        u_2, & \text{in } \Om \setm I, \\ 
        \min\{u_1,u_2\}, & \text{in } I.
	\end{cases}
\]
is a $Q$-quasisuperminimizer in $\Om$ with optimal 
quasisuperminimizer constant
\begin{equation} \label{eq-Qpaste}
      Q \ge Q_1(Q_2-1)+1 = Q_1 Q_2-Q_1+1.
\end{equation}

If moreover $Q_1>1$, then 
the inequality in~\eqref{eq-Qpaste} is strict, i.e.\ 
$Q> Q_1(Q_2-1)+1$.
\end{prop}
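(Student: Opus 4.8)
The plan is to take $u_2$ to be the optimal convex one-corner function on $\Om=(0,1)$ furnished by Lemma~\ref{lem-ex-one-corner-fn}, and to let $I$ be the part of $\Om$ to the right of its corner, where $u_2$ is affine; the ``flat'' left piece of $u_2$ stores extra $p$-energy that is then re-amplified by the factor $Q_1$ when one takes the minimum with $u_1$ on $I$. Concretely, assume first $Q_1,Q_2>1$ and for $j=1,2$ pick $\ga_j>1$ with $Q(\ga_j,p)=Q_j$ (possible by Proposition~\ref{prop-Q-increasing}). Let $u_2$ be the function of Lemma~\ref{lem-ex-one-corner-fn} for $\ga=\ga_2$: a convex one-corner function on $\Om$ with corner $x_0\in(0,1)$, $u_2(0)=0$, $u_2(1)=1$, left slope $\al_2$ and right slope $\al_2\ga_2$, where $\al_2$ and $x_0$ are as in~\eqref{eq-x0-al}, which is a $Q_2$-quasiminimizer in $\Om$ with the maximal $p$-energy allowed by $Q_2$ there. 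Put $I=(x_0,1)$; on $I$ the function $u_2$ is the affine segment from $(x_0,\al_2 x_0)$ to $(1,1)$, of slope $(1-\al_2 x_0)/(1-x_0)=\al_2\ga_2$. Let $u_1$ be the optimal convex one-corner function on $I$ obtained from Lemma~\ref{lem-ex-one-corner-fn} for $\ga=\ga_1$ after an affine change of variables, normalised so that $u_1(x_0)=\al_2 x_0$ and $u_1(1)=1$; it is a $Q_1$-quasiminimizer in $I$ with the maximal $p$-energy allowed by $Q_1$ there. Being convex on $[x_0,1]$ with the same endpoints as the affine function $u_2|_I$ (which is therefore its chord), $u_1$ satisfies $u_1\le u_2$ on $I$, so the function $u$ of the statement equals $u_2$ on $(0,x_0)$ and $u_1$ on $I$; it is continuous and piecewise linear, hence in $\Wp(\Om)$, and thus a $Q_1Q_2$-quasisuperminimizer in $\Om$ by Theorem~\ref{thm-paste-supermin}.

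Next I would compute the $p$-energy of $u$. By the maximal-energy property of $u_1$ on $I$ (Definition~\ref{deff-determining}) together with the affinity of $u_2$ on $I$,
\[
\int_{x_0}^1|u_1'|^p\,dx=Q_1\frac{(1-\al_2 x_0)^p}{(1-x_0)^{p-1}}=Q_1(\al_2\ga_2)^p(1-x_0),
\qquad
\int_0^{x_0}|u_2'|^p\,dx=\al_2^p x_0,
\]
while the maximal-energy property of $u_2$ on $\Om$ gives $\al_2^p x_0+(\al_2\ga_2)^p(1-x_0)=Q_2$. Hence
\[
\int_0^1|u'|^p\,dx=\al_2^p x_0+Q_1(\al_2\ga_2)^p(1-x_0)=Q_2+(Q_1-1)(\al_2\ga_2)^p(1-x_0).
\]
The crucial estimate is $(\al_2\ga_2)^p(1-x_0)=Q_2-\al_2^p x_0>Q_2-1$: indeed $x_0<1$, while $\al_2<1$ since the two exponents $\al_2<\al_2\ga_2$ with $Q_{\al_2}=Q_{\al_2\ga_2}=Q_2$ in~\eqref{eq-def-Q-al} straddle $1$ (Example~\ref{ex-power} and Remark~\ref{rmk-Qag}), so that $\al_2^p x_0<1$. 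Therefore $\int_0^1|u'|^p\,dx\ge Q_2+(Q_1-1)(Q_2-1)=Q_1(Q_2-1)+1$, with strict inequality whenever $Q_1>1$.

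It then remains to convert this into a lower bound for the optimal quasisuperminimizing constant of $u$. I would test against $v(x)=x$, the minimizer in $\Om$ with $v=u$ on $\bdry\Om$. Since $\al_2<1<\al_2\ga_2$, one checks that $u\le v$ on $\Om$ with $u<v$ on all of $(0,1)$ (on $(0,x_0]$ because $u_2(x)=\al_2 x<x$, and on $(x_0,1)$ because $u_1\le u_2$ and the affine function $u_2|_I$ lies strictly below $v$ there). Hence $\phi:=v-u\in\Wp_0(\Om)$ is a nonnegative test function with $\{\phi\ne0\}=\Om$ up to a null set, and~\eqref{eq-qm-Rn} applied to $u$ gives $\int_\Om|u'|^p\,dx\le Q\int_\Om|v'|^p\,dx=Q$. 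Thus the optimal constant satisfies $Q\ge\int_0^1|u'|^p\,dx\ge Q_1(Q_2-1)+1$, strictly if $Q_1>1$. The degenerate cases are immediate: if $Q_1=1$ keep $u_2$ as above and take $u_1=u_2|_I$ (a minimizer in $I$), so that $u=u_2$ and $Q=Q_2=Q_1(Q_2-1)+1$; if $Q_1>1$ and $Q_2=1$, take $x_0=0$, $u_2(x)=x$ and $u_1$ the optimal convex $Q_1$-quasiminimizer from $(0,0)$ to $(1,1)$, so that $u=u_1$ and $Q\ge\int_0^1|u_1'|^p\,dx=Q_1>1$.

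I expect the main work to be the bookkeeping that guarantees $u_1\le u_2$ on $I$ — which is what makes $u$ split into the clean two-piece form and simultaneously forces $v\ge u$ — and the verification that $\al_2<1$, the single fact responsible for recovering the full factor $Q_2-1$ (and the inequality that is pushed further, by varying $p$, in Theorem~\ref{thm-paste-sharp}).
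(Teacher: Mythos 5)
Your proposal is correct and follows essentially the same route as the paper's proof: the optimal convex one-corner function $u_2$ from Lemma~\ref{lem-ex-one-corner-fn} on $(0,1)$, the optimal one-corner $Q_1$-quasiminimizer $u_1$ on $I=(x_0,1)$ with the maximal \p-energy and boundary values $u_2$, the same energy bookkeeping (your $Q_2+(Q_1-1)(\al_2\ga_2)^p(1-x_0)$ is the paper's $Q_1Q_2-A(Q_1-1)$ with $A=\al^p x_0<1$), and the comparison with $v(x)=x$. Your extra touches (invoking Theorem~\ref{thm-paste-supermin} to ensure $u$ is a quasisuperminimizer, and justifying $\al_2<1$ via the two exponents straddling $1$) only make explicit what the paper leaves implicit.
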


\begin{proof}
Using Proposition~\ref{prop-Q-increasing} and Lemma~\ref{lem-ex-one-corner-fn} 
we can find $0 \le x_0<1$, $\ga \ge 1$ and $0<\alp \le 1$ such that
the function $u_2$ given by~\eqref{eq-def-one-corner}
is a $Q_2$-quasiminimizer in $\Om$ 
with the maximal \p-energy allowed by $Q_2$ on $\Om$.
For $Q_2=1$ let $u_2(x)=x$ and $x_0=0$.

Another use of Lemma~\ref{lem-ex-one-corner-fn} provides us with 
a convex one-corner function $u_1$ which
is a $Q_1$-quasiminimizer in $I=(x_0,1)$ 
with 
boundary values $u_1=u_2$ on $\bdy I$ and
maximal \p-energy allowed by $Q_1$ on $I$.
(If $Q_1=1$, we let $u_1\equiv u_2$ on $I$, which is not a one-corner function.)

Since $\al\le1$ and $x_0<1$, we have
\[
     A:=\int_0^{x_0} |u_2'|^p\, dx = \alp^p x_0 < 1.
\]
It then follows that 
\begin{align} \label{eq-A}
     \int_0^1 |u'|^p\,dx 
	&= \int_0^{x_0} |u_2'|^p\, dx + \int_{x_0}^1 |u_1'|^p\, dx
	= A + Q_1\int_{x_0}^1 |u_2'|^p\, dx \\
	&= A + Q_1(Q_2-A) 
	= Q_1Q_2-A(Q_1-1)
	\ge Q_1Q_2-(Q_1-1), \nonumber
\end{align}
where the inequality is strict if $Q_1>1$.
As $v(x)=x$ is the minimizer with boundary values $v=u$ on $\bdy \Om$,
and its \p-energy on $\Om$ is $1$, this concludes the proof.
\end{proof}

We are now ready to prove Theorems~\ref{thm-Q1Q2-sharp} 
and~\ref{thm-paste-sharp}.

\begin{proof}[Proof of Theorem~\ref{thm-Q1Q2-sharp}]
The argument is a modification of the proof of 
Proposition~\ref{prop-paste-trivial-lower-bdd}.
Let $\Om_2=(0,1)\subset\R$ and $u_2$ and $x_0$ be as in the proof of
Proposition~\ref{prop-paste-trivial-lower-bdd}.
Now let $\Om_1=(0,x_0)\cup(x_0,1)$ and choose $u_1$ so that $u_1(x)=u_2(x)$
for $x=0,x_0,1$, and its restrictions
to $(0,x_0)$ and to $(x_0,1)$ are convex one-corner functions provided 
by Lemma~\ref{lem-ex-one-corner-fn}, which are
$Q_1$-quasiminimizers in the respective 
intervals and have the maximal energy therein allowed by $Q_1$. 
But then $u=u_1$ and 
\begin{align*}
\int_0^1 |u'|^p\,dx 
	&= \int_0^{x_0} |u_1'|^p\, dx + \int_{x_0}^1 |u_1'|^p\, dx  \\
	&= Q_1 \int_0^{x_0} |u_2'|^p\, dx + Q_1\int_{x_0}^1 |u_2'|^p\, dx 
	= Q_1 \int_0^1 |u_2'|^p\, dx = Q_1 Q_2.\qedhere
\end{align*}
\end{proof}

\begin{proof}[Proof of Theorem~\ref{thm-paste-sharp}]
We proceed as in the proof of Proposition~\ref{prop-paste-trivial-lower-bdd}.
By Proposition~\ref{prop-ga>Q}, 
$1-\ga^{1-p}\ge 1-1/Q$.
It thus follows from Lemma~\ref{lem-ex-one-corner-fn} that 
\[
      \alp = \frac{p-1}{p} \frac{1-\ga^{-p}}{1-\ga^{1-p}}
           < \frac{p-1}{p} \frac{1}{1-1/Q}
	\to 0, \quad \text{as } p\to1\limplus.
\] 
Hence $A=\alp^p x_0 <\al^p \to 0$, as $p\to1\limplus$, so as in 
\eqref{eq-A},
\[
       \int_0^1 |u'|^p\,dx 
	= Q_1Q_2-A(Q_1-1) \to Q_1Q_2, \quad \text{as } p\to1\limplus.
\]
(Note that $u_1$, $u_2$ and $u$ depend on $p$.)
\end{proof}

\begin{remark} \label{rmk-second}
The estimate~\eqref{eq-Qpaste} in Proposition~\ref{prop-paste-trivial-lower-bdd}
can be replaced by $Q\ge Q_2(Q_1+1)/2$, which gives a better lower bound 
when $Q_2<2$.
Indeed, we always have
\[
\int_0^{x_0} |u_2'|^p\, dx\le \frac{Q_2}{2}
\quad \text{or} \quad 
\int_{x_0}^1 |u_2'|^p\, dx\le \frac{Q_2}{2}.
\]
In the former case, the proof goes through as before, in the latter case, 
replace $u_2$ and $u_1$ by decreasing convex one-corner
functions in $\Om$ and $I$, respectively,
with the maximal \p-energies allowed by 
$Q_2$ and $Q_1$ therein, so that $u_2(0)=1$, $u_1(x_0)=u_2(x_0)$ and 
$u_1(1)=u_2(1)=0$.
In both cases, a direct calculation gives
\begin{equation} \label{eq-Q-second}
Q\ge Q_1 Q_2 -\frac{Q_2}{2}(Q_1-1) = \frac{Q_2(Q_1+1)}{2}.
\end{equation}
\end{remark}

\begin{cor}
Let $p$, $Q_1$, $Q_2$ and open sets
$\Om_1\subsetneq \Om_2=(0,1)$ be given. 
Then there are $u_1$ and $u_2$, which are $Q_1$- and $Q_2$-quasiminimizers
in $\Om_1$ and $\Om_2$, respectively, such that 
\begin{equation}   \label{eq-def-u-paste}
     u=\begin{cases}
        u_2, & \text{in } \Om_2 \setm \Om_1, \\ 
        \min\{u_1,u_2\}, & \text{in } \Om_1,
	\end{cases}
\end{equation}
is a $Q$-quasisuperminimizer in $\Om$ with optimal 
quasisuperminimizer constant satisfying~\eqref{eq-Qpaste}
and \eqref{eq-Q-second}.
\end{cor}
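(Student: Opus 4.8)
The plan is to deduce this from Proposition~\ref{prop-paste-trivial-lower-bdd} and Remark~\ref{rmk-second} by rescaling the one-corner constructions used there so that the ``active'' interval $I$ becomes a connected component of the prescribed $\Om_1$, and by testing the quasisuperminimizing inequality for the pasted function $u$ with a test function supported only in a short subinterval around that component, rather than in all of $(0,1)$.

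Concretely, I would fix a connected component $(a,b)$ of the (nonempty) open set $\Om_1$. Since $(a,b)\subsetneq(0,1)$, at least one of $a,b$ belongs to $(0,1)\setm\Om_1$, and after the reflection $x\mapsto1-x$ — which preserves quasiminimizers, the pasting formula~\eqref{eq-def-u-paste}, and all the constants in the statement — I may assume $0<a$, so $a\in(0,1)\setm\Om_1$. I take $u_2$ to be a convex one-corner $Q_2$-quasiminimizer on $\Om$ with corner at~$a$ and $u_2(0)=0$, $u_2(1)=1$, chosen (via Proposition~\ref{prop-Q-increasing} and Theorem~\ref{thm-ga-p}) so that $u_2$ has the maximal \p-energy allowed by $Q_2$ on the interval $J=(c,b)$, where $c=a-k(b-a)$ and $k$ is the proportion constant of Theorem~\ref{thm-ga-p} for~$Q_2$; this needs $c\ge0$, a condition that holds — possibly after a further reflection to attach the shallow part on the right — unless the component is ``long'' (see below). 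The shallow part $(c,a)$ then lies outside $\Om_1$, and since $a$ is the only corner of $u_2$ and $a\notin\Om_1$, the function $u_2$ is affine on every component of $\Om_1$. On $(a,b)$, which is exactly the steep part of this rescaled picture, I let $u_1$ be the convex one-corner $Q_1$-quasiminimizer with maximal \p-energy allowed by $Q_1$ and $u_1=u_2$ on $\{a,b\}$ (Lemma~\ref{lem-ex-one-corner-fn}), so that $u_1\le u_2$ there; on every other component of $\Om_1$ I set $u_1=u_2$.

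Then $u_1$ is a genuine $Q_1$-quasiminimizer in $\Om_1$, $u=u_2$ off $(a,b)$, $u=u_1$ on $(a,b)$, $u$ is convex on $J$, and if $v$ denotes the affine interpolant of $u$ on $J$, then $\phi:=(v-u)\chi_J\ge0$ lies in $W^{1,p}_0(\Om)$ with $\{\phi\ne0\}=J$ up to a null set. Testing~\eqref{eq-qm-Rn} for $u$ — which is a quasisuperminimizer by Theorem~\ref{thm-paste-supermin} — against $\phi$ gives $Q\ge\int_J|u'|^p\,dx\big/\int_J|v'|^p\,dx$, and by scale invariance this ratio is exactly the quantity $Q_1Q_2-A(Q_1-1)$ with $0<A<1$ computed in the proof of Proposition~\ref{prop-paste-trivial-lower-bdd}; running the argument with the decreasing variant and the small-energy-side placement of Remark~\ref{rmk-second} (again reflecting if necessary) makes $A\le Q_2/2$ instead. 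Together these yield the lower bounds~\eqref{eq-Qpaste} and~\eqref{eq-Q-second} for the optimal constant of $u$.

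The step I expect to be the real obstacle is ensuring that $u_1$ is a bona fide $Q_1$-quasiminimizer on the whole of $\Om_1$: this is what forces the steep part of the rescaled picture to be an \emph{entire} component of $\Om_1$, for otherwise $u_1$ would restrict to that component as a convex piecewise-linear function with several corners, whose quasiminimizing constant is not controlled by Theorem~\ref{thm-ga-p}. Consequently the shallow part, of length $k(b-a)$, must fit inside $[0,1]$ on one side of $(a,b)$; this fails on both sides precisely when every usable component of $\Om_1$ occupies more than the fixed fraction $1/(1+2k)$ of $(0,1)$, and that remaining case must be treated separately — by placing the rescaled picture inside such a long component and arguing, with a more careful choice of the constituent one-corner pieces, that $u_1$ can still be taken to be a $Q_1$-quasiminimizer there — after which one verifies that the resulting ratio still dominates both right-hand sides in~\eqref{eq-Qpaste} and~\eqref{eq-Q-second}.
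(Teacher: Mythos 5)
Your construction, in the case where the shallow interval fits, is essentially the paper's, but there is a genuine gap in the remaining case, and the reason you give for being pushed into that case is not correct. You insist that the steep part of the rescaled picture be an \emph{entire} component $(a,b)$ of $\Om_1$, on the grounds that otherwise $u_1$ would restrict to that component as a piecewise linear function with several corners, uncontrolled by Theorem~\ref{thm-ga-p}. That only happens if you define $u_1$ by $u_2$ on the rest of the component; nothing forces this. The paper instead picks a component $(x_1,x_2)$ with $x_1>0$ (reflecting when $x_1=0$, exactly as you do), chooses $\de>0$ so small that $\Om':=(x_1-k\de,x_1+\de)\subset(0,x_2)$, places the corner of the global one-corner function $u_2$ at $x_1$, lets $v_1$ be the optimal convex one-corner $Q_1$-quasiminimizer on the short interval $I':=(x_1,x_1+\de)$ with $v_1=u_2$ on $\bdy I'$, and then extends $v_1$ \emph{linearly} over the rest of $(x_1,x_2)$. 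The extension creates no new corner, so $u_1$ is still a one-corner function on $(x_1,x_2)$ and hence a $Q_1$-quasiminimizer there by Theorem~\ref{thm-ga-p}; on the other components one sets $u_1=u_2$, as you do. On $\Om'$ the pasted function \eqref{eq-def-u-paste} then coincides with the rescaled pasted function of Proposition~\ref{prop-paste-trivial-lower-bdd} (or of Remark~\ref{rmk-second}, whichever gives the larger bound), and testing inside $\Om'$ yields \eqref{eq-Qpaste} and \eqref{eq-Q-second}. Since $\de$ may be taken arbitrarily small, no largeness restriction on the component ever arises.

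Because you do not use this linear-extension device, your proof is incomplete precisely when no component of $\Om_1$ leaves room of length $k$ times its own length on either side inside $(0,1)$ --- for instance $\Om_1=(\eps,1-\eps)$ with $\eps$ small; since $k$ depends only on the given $p$ and $Q_2$, such $\Om_1$ occur for every admissible choice of the data, so this is not a negligible boundary case. Your sketch for it (``placing the rescaled picture inside such a long component'') is not carried out and is not routine: if the shallow part lies inside the component, then there $u=\min\{u_1,u_2\}$, so you would additionally need $u_1\ge u_2$ on the shallow part while keeping $u_1$ a $Q_1$-quasiminimizer on the whole component, and the slope comparison needed for this can fail, depending on $Q_1$, $Q_2$ and $p$. (A minor further slip: the shallow part $(c,a)$ need not lie outside $\Om_1$ --- it may meet other components --- but this is harmless since $u_1=u_2$ there, and your argument already uses only that.)
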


\begin{proof}
Since $\Om_1$ is open, it can be written as a pairwise disjoint union
of open intervals. 
Let $(x_1,x_2)$ be one of them and assume to begin 
with that $x_1>0$.
We can then find $\de>0$ so that 
\[
\Om':=(x_1-k\de,x_1+\de)\subset (0,x_2),
\]
where $k$ is the constant 
associated with $Q_2$ as in Theorem~\ref{thm-ga-p}.

Rescale the functions in
Proposition~\ref{prop-paste-trivial-lower-bdd} 
or Remark~\ref{rmk-second} (depending on which gives a better estimate)
so that they apply to the
sets $\Om'$ and $I':=(x_1,x_1+\de)$ in place of $\Om$ and $I$.
This provides us with one-corner functions $v_1$ and $v_2$, which are 
$Q_1$- and $Q_2$-quasiminimizers in $I'$ and $\Om'$, respectively,
and their pasted function is a $Q$-quasisuperminimizer in $\Om'$ with optimal 
quasisuperminimizer constant $Q$ satisfying~\eqref{eq-Qpaste}
and \eqref{eq-Q-second}.

Now, let $u_2$ be the linear extension of $v_2$ which is a one-corner 
function on the whole of $(0,1)$. 
Also, let 
\[
u_1 = \begin{cases}
        u_2, & \text{in } \Om_1 \setm (x_1,x_2), \\ 
        v_1, & \text{in } (x_1,x_2),
	\end{cases}
\]
where $v_1$ is extended linearly as a one-corner functions on the whole
of $(x_1,x_2)$.
Then the best quasiminimizing constants of $u_1$ and $u_2$ in $\Om_1$ 
and $\Om_2$ are still $Q_1$ and $Q_2$, but their pasted function $u$
given by~\eqref{eq-def-u-paste}
will have its optimal quasisuperminimizing constant 
satisfying~\eqref{eq-Qpaste} and \eqref{eq-Q-second}
in $\Om'$ and thus in $\Om_2$.

If $x_1=0$ then necessarily $x_2<1$ and the above construction can be done for
the interval $(1-x_2,1)$ instead,  replacing $u_1$ and $u_2$
by the decreasing convex one-corner functions $x\mapsto u_1(1-x)$
and $x\mapsto u_2(1-x)$.
\end{proof}

We conclude the paper with further examples of quasiminimizers with explicit
optimal quasiminimizing constants.

\begin{prop}   \label{prop-optimal-zig-zag}
Every strictly increasing continuous piecewise linear function $u$ in $(0,1)$ 
\textup{(}having finitely many corners\/\textup{)}
with alternating slopes $\al$ and $\be$, $\al<\be$, 
is a quasiminimizer in $(0,1)$
with the best quasiminimizing constant $Q$ given by~\eqref{eq-Q-without-k}
with $\ga=\be/\al$.

Moreover, if $u$ has at least one convex {\rm(}concave\/{\rm)} corner, 
then $Q$ is also
the best quasisuperminimizing {\rm(}quasisubminimizing\/{\rm)} constant.
\end{prop}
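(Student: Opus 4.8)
The plan is to reduce the statement to a \p-energy comparison on arbitrary subintervals of $(0,1)$, and then to exploit that on a zigzag this comparison depends only on \emph{how much} of the interval carries slope $\al$ versus slope $\be$, not on the order in which the segments appear. Throughout put $\ga=\be/\al>1$, $\Om=(0,1)$, and let $Q=Q(\ga,p)$ be the constant of Theorem~\ref{thm-ga-p}; since $u$ is Lipschitz, $u\in\Wploc(\Om)$. Given $\phi\in W^{1,p}_0(\Om)$, the open set $\{\phi\ne0\}$ is a countable disjoint union of intervals $(a_j,b_j)$, and as $\phi$ is continuous and vanishes at each $a_j,b_j$ we have $(u+\phi)(b_j)-(u+\phi)(a_j)=u(b_j)-u(a_j)$; Jensen's inequality applied to $t\mapsto|t|^p$ gives $\int_{a_j}^{b_j}|\grad(u+\phi)|^p\,dx\ge(u(b_j)-u(a_j))^p/(b_j-a_j)^{p-1}$. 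Summing over $j$, the quasiminimizer inequality~\eqref{eq-qm-Rn} for $u$ with constant $Q$ follows once one establishes
\[
\int_a^b|\grad u|^p\,dx\le Q\,\frac{(u(b)-u(a))^p}{(b-a)^{p-1}}
\qquad\text{for every }(a,b)\subset(0,1).
\]

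To prove this, let $L_\al$, $L_\be$ be the measures of $\{u'=\al\}$, $\{u'=\be\}$ inside $(a,b)$, so $L_\al+L_\be=b-a$. Since $u$ is increasing, $u(b)-u(a)=\al L_\al+\be L_\be$ and $\int_a^b|\grad u|^p\,dx=\al^p L_\al+\be^p L_\be$, so with $s=L_\be/(b-a)\in[0,1]$ the quotient above equals
\[
f(s):=\frac{1+(\ga^p-1)s}{(1+(\ga-1)s)^p},
\]
which depends only on $\ga$ and $p$, not on the arrangement of the segments. A short computation shows $f'>0$ on $[0,s^*)$ and $f'<0$ on $(s^*,1]$, where
\[
s^*=\frac{\ga^p-1-p(\ga-1)}{(p-1)(\ga^p-1)(\ga-1)}\in(0,1);
\]
since $f(0)=f(1)=1$, the maximum of $f$ on $[0,1]$ is $f(s^*)$. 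Comparing with~\eqref{eq-x0-al} gives $s^*=1-x_0$, and the one-corner function of Lemma~\ref{lem-ex-one-corner-fn} carries slope $\be$ on a subset of $(0,1)$ of measure $1-x_0$ and has \p-energy $Q$ there (its maximal \p-energy allowed by $Q$, the linear interpolant having \p-energy $1$), so $f(s^*)=Q$. Hence $f\le Q$, which is the desired estimate, and $u$ is a $Q$-quasiminimizer.

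For optimality, note that $\al<\be$ forces $u$ to have a corner $c$. If $c$ is convex (slope $\al$ just left of $c$, $\be$ just right), choose $a,b>0$ with $a/b=k$, the constant of Theorem~\ref{thm-ga-p}, small enough that $(c-a,c)$ lies in an $\al$-segment and $(c,c+b)$ in a $\be$-segment; on $I=(c-a,c+b)$ then $L_\be/|I|=b/(a+b)=1/(k+1)=1-x_0=s^*$, so $\int_I|\grad u|^p\,dx=Q\,(u(c+b)-u(c-a))^p/|I|^{p-1}$. Taking $v$ to be the linear interpolant of $u$ on $I$ and $\phi=v-u$ (extended by $0$ to $\Om$), we get $0\le\phi\in W^{1,p}_0(\Om)$, $\{\phi\ne0\}=I$, $u+\phi=v$ on $I$, and therefore $\int_{\phi\ne0}|\grad u|^p\,dx=Q\int_{\phi\ne0}|\grad(u+\phi)|^p\,dx$. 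Thus $u$ is not a $Q'$-quasiminimizer (in particular not a $Q'$-quasisuperminimizer) for any $Q'<Q$; combined with the previous paragraph (a $Q$-quasiminimizer is automatically a $Q$-quasisuperminimizer), this shows $Q$ is the best constant in both cases. When $c$ is instead a concave corner, the mirror construction on $I=(c-b,c+a)$ with $a/b=k$ — equivalently, reflecting in $x\mapsto-x$ and using the last assertion of Theorem~\ref{thm-ga-p} — produces $\phi=v-u\le0$ certifying that $u$ is not a $Q'$-quasisubminimizer for $Q'<Q$.

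The Jensen reduction and the single-variable analysis of $f$ are routine; the point to handle carefully is the optimality step, where the test function must be nonnegative at a convex corner and nonpositive at a concave one (this is exactly what separates the quasisuperminimizing from the quasisubminimizing assertion), and where one must check that a short interval around a corner can always be chosen with precisely the energy-maximizing slope proportion $k:1$ — which is possible because every segment of $u$ has positive length.
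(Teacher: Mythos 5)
Your argument is correct, and while the optimality half runs along the same lines as the paper (a short interval around a single corner with the side lengths in the ratio $k:1$, the sign of the test function $\phi=v-u$ being what separates the quasisuperminimizing from the quasisubminimizing claim), your proof of the upper bound takes a genuinely different route. The paper fixes $(a,b)$, splits it at the crossings of $u$ with its chord, and on each piece performs energy-preserving swaps that eliminate corners two at a time until only a one-corner function remains, to which Theorem~\ref{thm-ga-p} is applied; you instead observe that the ratio of $\int_a^b|u'|^p\,dx$ to the chord's \p-energy depends only on the proportion $s$ of $(a,b)$ carrying slope $\be$, reduce everything to maximizing the explicit one-variable function $f(s)$, and identify $\max f=f(s^*)=Q$ via $s^*=1-x_0$ and Lemma~\ref{lem-ex-one-corner-fn}. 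Your route avoids the case analysis and the combinatorial rearrangement, makes the reduction from arbitrary $\phi\in W^{1,p}_0$ to linear competitors explicit (Jensen on each component of $\{\phi\ne0\}$, which the paper leaves implicit), and yields as a byproduct that \emph{any} subinterval with slope-$\be$ proportion $s^*$ attains the ratio $Q$ --- which is exactly what your optimality step then exploits, making the appeal to the constant $k$ and the identity $x_0=k/(k+1)$ (which appears only in the proof of Lemma~\ref{lem-ex-one-corner-fn}, not its statement) dispensable: you could simply pick the interval with proportion $s^*$ directly. What you give up is some self-containedness in spirit only: the identification $f(s^*)=Q$ still rests on Lemma~\ref{lem-ex-one-corner-fn}, hence ultimately on the same sharp one-corner constant \eqref{eq-Q-without-k} from Theorem~\ref{thm-ga-p} that drives the paper's rearrangement proof (alternatively you could verify $f(s^*)=Q$ by direct computation against \eqref{eq-Q-without-k}). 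Both proofs are comparable in length; yours is more computational, the paper's more geometric.
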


Clearly, replacing $u$ with $x\mapsto u(1-x)$ gives a strictly decreasing
quasiminimizer with the same best quasiminimizing constant as $u$.
Note also that we do not require that the first segment defining $u$ has 
slope~$\al$, nor that the last segment has slope~$\be$.
However, we do not allow $u$ to be a linear function in the proposition, 
as then $\ga=1$ and 
$Q$ cannot be defined using \eqref{eq-Q-without-k}. Nevertheless,
$Q=1$ is trivially the best quasiminimizing constant 
for $u$ in this case.

\begin{proof}
To show that $Q$ will do, let $0\le a<b\le1$ be arbitrary and consider
the linear function $h$ with $h(a)=u(a)$ and $h(b)=u(b)$.
By splitting $(a,b)$ into several subintervals, whose energies can be
estimated separately, we may assume that 
either 
$h=u$ in $(a,b)$,
$h<u$ in $(a,b)$
or $h>u$ in $(a,b)$. 

If $h>u$ in $(a,b)$, then
moving from $a$ to $b$, we can successively eliminate the concave
corners as follows:
If 
\[
u(x)=\max\{u(x')+\be(x-x'),u(x'')+\al(x-x'')\}
\] 
in the interval $(x',x'')$, 
where $x'$ and $x''$ are two convex corners, then replace $u$ in 
that interval by 
\[
\min\{u(x')+\al(x-x'),u(x'')+\be(x-x'')\}.
\]
This will decrease the number of corners in $(a,b)$ by $2$, while preserving
the \p-energy of $u$ therein.
In the end, this procedure leaves us with a function which in $(a,b)$
coincides with a one-corner function $v$ with slopes $\al$ and $\be$ and
the same \p-energy therein as $u$.
Theorem~\ref{thm-ga-p} shows that $v$ is a $Q$-quasiminimizers in $(a,b)$
and hence 
\[
\int_a^b |h'|^p\,dx \le Q \int_a^b |v'|^p\,dx = Q \int_a^b |u'|^p\,dx.
\]
The argument is similar when $h <u$ in $(a,b)$,
while if $h=u$ in $(a,b)$ we trivially have
$
\int_a^b |h'|^p\,dx = \int_a^b |u'|^p\,dx <   Q\int_a^b |u'|^p\,dx. 
$
As $a$ and $b$ were arbitrary, this shows that $u$ is a $Q$-quasiminimizer.

Finally, if $u$ has at least one convex (concave) corner, then considering
intervals of type $(x_0-k\de,x_0+\de)$, where $x_0$ is one such corner, together
with the last part of Theorem~\ref{thm-ga-p} 
shows that the quasisuperminimizing 
(quasisubminimizing) constant of $u$ cannot be better than $Q$.
As every piecewise linear function with nonequal slopes has at least 
one corner, this concludes the proof.
\end{proof}


\begin{thebibliography}{99}

\bibitem{ABkellogg} \art{\auth{Bj\"orn}{A}}
        {A weak Kellogg property for quasiminimizers}
        {Comment. Math. Helv.} {81} {2006} {809--825}

\bibitem{ABremove} \art{\auth{Bj\"orn}{A}}
        {Removable singularities for bounded \p-harmonic 
          and quasi\-(super)\-harmonic functions on metric spaces}
        {Ann. Acad. Sci. Fenn. Math.} {31} {2006} {71--95}

\bibitem{ABclass} \art{\auth{Bj\"orn}{A}}
         {A regularity classification of boundary points
           for \p-harmonic functions and quasiminimizers}
        {J. Math. Anal. Appl.} {338} {2008} {39--47}

\bibitem{ABcluster} \art{\auth{Bj\"orn}{A}}
        {Cluster sets for Sobolev functions and quasiminimizers}
        {J. Anal. Math.} {112} {2010} {49--77}

\bibitem{BBpower} \art{ Bj\"orn, A. \AND Bj\"orn, J.} 
	{Power-type quasiminimizers}
	{Ann. Acad. Sci. Fenn. Math.} {36} {2011} {301--319}

\bibitem{BBbook} \book{Bj\"orn, A. \AND Bj\"orn, J.}
        {\it Nonlinear Potential Theory on Metric Spaces}
    {EMS Tracts in Mathematics {\bf 17},
        European Math. Soc., Z\"urich, 2011}

\bibitem{BBM} \art{\auth{Bj\"orn}{A}, \auth{Bj\"orn}{J}
	\AND \auth{Marola}{N}}
	{$\BMO$, local integrability, Harnack and Caccioppoli 
	inequalities for quasisuperharmonic functions}
	{Ann. Inst. H. Poincar\'e Anal. Non Lin\'eaire}
        {27} {2010} {1489--1505}

\bibitem{BMarola} \art{Bj\"orn, A. \AND Marola, N.}
  {Moser iteration for (quasi)minimizers on metric spaces}
  {Manuscripta Math.}{121}{2006}{339--366}

\bibitem{BMartio} \art{ Bj\"orn, A. \AND Martio, O.}
	{Pasting lemmas and characterizations of boundary 
	regularity for quasiminimizers}
	{Results Math.} {55} {2009} {265--279}

\bibitem{Bj02}  \art{Bj\"orn, J.}
        {Boundary continuity for quasiminimizers on metric spaces}
        {Illinois J. Math.} {46} {2002} {383--403}

\bibitem{JBCalcVar} \art{\auth{Bj\"orn}{J}}
        {Necessity of a Wiener type condition for boundary regularity
          of quasiminimizers and nonlinear elliptic equations}
        {Calc. Var. Partial Differential Equations}
	{35} {2009} {481--496}

\bibitem{JBqmin15} \artprep{Bj\"orn, J.}
        {Sharp exponents and a Wiener type condition for boundary 
          regularity of quasiminimizers}
        {\emph{Preprint}, 2015, {\tt arXiv:1504.08197}}

\bibitem{DiBen-Gia} \artprep{DiBenedetto, E. \AND Gianazza, U.}
        {A Wiener-type condition for boundary continuity of quasi-minima 
        of variational integrals}
        {Preprint {\tt arXiv:1504.01600}}


\bibitem{DiBTru} \art{DiBenedetto, E. \AND Trudinger, N. S.}
  {Harnack inequalities for quasiminima of variational integrals}
  {Ann. Inst. H. Poincar\'e Anal. Non Lin\'eaire}{1}{1984}{295--308}

\bibitem{GG1} \art{Giaquinta, M. \AND Giusti, E.}
         {On the regularity of the minima of variational integrals}
         {Acta Math.} {148} {1982} {31--46}

\bibitem{GG2} \art{Giaquinta, M. \AND Giusti, E.}
         {Quasi-minima}
         {Ann. Inst. H. Poincar\'e Anal. Non Lin\'eaire} {1} {1984} {79--107}

\bibitem{Ivert}  \artin{Ivert, P.-A.}
        {Continuity of quasiminima under the presence of irregular obstacles} 
        {{\it Partial Differential Equations} (Warsaw, 1984), pp.\ 155--167,
        Banach Center Publ., 19, PWN, Warsaw, 1987} 


\bibitem{judin} {\sc Judin, P. T.},
	{\it Onedimensional Quasiminimizers and Quasisuperminimizers\/
\textup{[}Yksiulotteiset Kvasiminimoijat ja Kvasisuperminimoijat\/\textup{]}},
        {Licentiate thesis, Dept. of Math., Helsinki University, 
        Helsinki, 2006 (Finnish)}.

\bibitem{KiKoLa} \art{\auth{Kinnunen}{J}, \auth{Kotilainen}{M}
    \AND \auth{Latvala}{V}}
  {Hardy--Littlewood type gradient estimates for quasiminimizers}
  {Boll. Unione Mat. Ital.} {3} {2010} {125--136}

\bibitem{KiMaMa} \art{\auth{Kinnunen}{J}, \auth{Marola}{N}
	\AND \auth{Martio}{O}}
	{Harnack's principle for quasiminimizers}	
	{Ric. Mat.} {56} {2007} {73--88}

\bibitem{KiMa03} \art{Kinnunen, J. \AND Martio, O.}
        {Potential theory of quasiminimizers}
        {Ann. Acad. Sci. Fenn. Math.} {28} {2003} {459--490}

\bibitem{KiSh01} \art{Kinnunen, J. \AND Shanmugalingam, N.}
        {Regularity of quasi-minimizers on metric spaces}
        {Manuscripta Math.} {105} {2001} {401--423}

 \bibitem{KoMaSh} \art{\auth{Korte}{R}, \auth{Marola}{N}
 	\AND \auth{Shanmugalingam}{N}}
 	{Quasiconformality, homeomorphisms between
 	metric measure spaces preserving quasiminimizers, and uniform
 	density property}
 	{Ark. Mat.} {50} {2012} {111--134}

\bibitem{latvala04} \art{\auth{Latvala}{V}}
  {BMO-invariance of quasiminimizers.}
  {Ann. Acad. Sci. Fenn. Math.} {29} {2004} {407--418}

\bibitem{Maly} \art{Mal\'y, J.}
        {Positive quasiminima}
        {Comment. Math. Univ. Carolin.}{24}{1983}{681--691}

\bibitem{martioReflect} \art{\auth{Martio}{O}}
        {Reflection principle for quasiminimizers}
        {Funct. Approx. Comment. Math.}
	{40} {2009} {165--173}

\bibitem{martioLiU09} \artprep{\auth{Martio}{O}}
        {Quasiminimizers -- definitions, constructions and
	capacity estimates}
	{Lectures held at the conference
	\emph{Nonlinear problems for $\Delta_p$ and $\Delta$},
	Link\"opings universitet, Link\"oping, 2009} \\
	{\tt http://www.mai.liu.se/TM/conf09/martio.pdf}

\bibitem{Mar-Ric1} \art{\auth{Martio}{O}}
	  {Quasilinear Riccati type equations and quasiminimizers}
	  {Adv. Nonlinear Stud.} {11} {2011} {473--482}

\bibitem{marCapEst} \art{\auth{Martio}{O}}
	  {Capacity and potential estimates for quasiminimizers}
	  {Complex Anal. Oper. Theory} {5} {2011} {683--699}

\bibitem{Mar-Ric2} \art{\auth{Martio}{O}}
	  {Quasiminimizing properties of solutions to Riccati type equations}
	  {Ann. Sc. Norm. Super. Pisa Cl. Sci.} {12} {2013} {823--832}

\bibitem{MaSb} \art{\auth{Martio}{O} \AND \auth{Sbordone}{C}}
 	{Quasiminimizers in one dimension: integrability of the derivative, 
 	inverse function and obstacle problems}
 	{Ann. Mat. Pura Appl.} {186} {2007} {579--590}


\bibitem{moscariello} \art{\auth{Moscariello}{G}}
        {Weak minima and quasiminima of variational integrals}
        {Boll. Un. Mat. Ital. B\/} {11} {1997} {355--364}

\bibitem{tolksdorf} \art{Tolksdorf, P.}
        {Remarks on quasi(sub)minima}
        {Nonlinear Anal.} {10} {1986} {115--120}


\bibitem{uppman} {\sc Uppman, H.},
	{\it The Reflection Principle for One-dimensional Quasiminimizers},
	  Master's  thesis, Link\"opings universitet,
	  Link\"oping, 2009. \\
	{\tt http://urn.kb.se/resolve?urn=urn:nbn:se:liu:diva-19162}

\bibitem{Ziem86} \art{Ziemer, W. P.}
        {Boundary regularity for quasiminima}
        {Arch. Ration. Mech. Anal.}{92}{1986}{371--382}

\end{thebibliography}
\end{document}